 \newtheorem{thm}{Theorem}[section]
 \newtheorem{cor}[thm]{Corollary}
 \newtheorem{lem}[thm]{Lemma}
 \newtheorem{prop}[thm]{Proposition}
 \theoremstyle{definition}
 \newtheorem{defn}[thm]{Definition}
 \theoremstyle{remark}
 \newtheorem{rem}[thm]{Remark}
 \newtheorem{rems}[thm]{Remarks}
 \newtheorem*{exam}{Example}
 \numberwithin{equation}{section}
\newcounter{smalllist}
\newenvironment{SL}{\begin{list}{{\rm\roman{smalllist})}}{%
\setlength{\topsep}{0mm}\setlength{\parsep}{0mm}\setlength{\itemsep}{0mm}%
\setlength{\labelwidth}{2em}\setlength{\leftmargin}{2em}\usecounter{smalllist}%
}}{\end{list}}
\newcommand{\ie}{\textit{i.e.}\;}
\newcommand{\eg}{\textit{e.g.}\;}
\newcommand{\cf}{\textit{cf.}\;}
\newcommand{\bbC}{\mathbb{C}}
\newcommand{\bbR}{\mathbb{R}}
\renewcommand{\geq}{\geqslant}
\renewcommand{\leq}{\leqslant}
\newcommand{\mrm}[1]{\mathrm{#1}}		%roman math
\newcommand{\ol}[1]{\overline{#1}}		%long bar
\newcommand{\co}{\colon}				%colon
\renewcommand{\vrt}{\,\vert\,}			%vertical line
\newcommand{\lto}{\rightarrow}			%space map
\newcommand{\abs}[1]{\lvert#1\rvert}	%absolute value
\newcommand{\norm}[1]{\lVert#1\rVert}	%norm
\newcommand{\img}{\mrm{i}}				%imaginary unit
\newcommand{\what}[1]{\widehat{#1}}		%wide hat
\newcommand{\wtilde}[1]{\widetilde{#1}}	%wide tilde
\newcommand{\setm}{\smallsetminus}		%{\setdif}%set minus
\newcommand{\op}{\oplus}				%orthogonal sum
\newcommand{\dsum}{\dotplus}			%direct sum
\newcommand{\hsum}{\,\what{+}\,}		%componentwise sum
\newcommand{\hop}{\,\what{\op}\,}		%orthogonal componentwise sum
\newcommand{\dfn}{\mathrel{\mathop:}=}	%definition
\newcommand{\hrho}{\hat{\rho}}
\DeclareMathOperator{\dom}{dom}		%domain
\DeclareMathOperator{\ran}{ran}		%range
\DeclareMathOperator{\mul}{mul}		%multivalued part
\DeclareMathOperator{\Ext}{Ext}		%proper extension
\DeclareMathOperator{\Self}{Self}	%proper self-adjoint extension
\newcommand{\cB}{\mathcal{B}}
\newcommand{\cC}{\mathcal{C}}
\newcommand{\cN}{\mathcal{N}}
\newcommand{\cL}{\mathcal{L}}
\newcommand{\cO}{\mathcal{O}}
\newcommand{\cX}{\mathcal{X}}
\newcommand{\cV}{\mathcal{V}}
\newcommand{\fH}{\mathfrak{H}}
\newcommand{\fK}{\mathfrak{K}}
\newcommand{\fL}{\mathfrak{L}}
\newcommand{\fM}{\mathfrak{M}}
\newcommand{\fN}{\mathfrak{N}}
\newcommand{\fT}{\mathfrak{T}}
\newcommand{\fU}{\mathfrak{U}}
\newcommand{\fV}{\mathfrak{V}}
\newcommand{\fW}{\mathfrak{W}}
\newcommand{\whgm}{\what{\gamma}}
\newcommand{\whfN}{\what{\fN}}
\newcommand{\whfM}{\what{\fM}}
\newcommand{\whf}{\what{f}}
\newcommand{\whg}{\what{g}}
\newcommand{\whu}{\what{u}}
\newcommand{\whv}{\what{v}}
\newcommand{\whh}{\what{h}}
\newcommand{\whJ}{\what{J}}
\newcommand{\wtE}{\wtilde{E}}
\newcommand{\wtV}{\wtilde{V}}
\newcommand{\wtT}{\wtilde{T}}
\newcommand{\wtU}{\wtilde{U}}
\newcommand{\wtK}{\wtilde{K}}
\newcommand{\vp}{\varphi}
\begin{document}
\title[On the similarity
of boundary triples]{On the similarity
of boundary triples \\ of
symmetric operators in Krein spaces}
\author[R. Jur\v{s}\.{e}nas]{Rytis Jur\v{s}\.{e}nas}
\address{%
Vilnius University, \\
Institute of Theoretical Physics and Astronomy, \\
Saul\.{e}tekio ave.~3, 10257 Vilnius, \\ Lithuania}
\email{rytis.jursenas@tfai.vu.lt}
\keywords{Linear relation, Hilbert space,
Krein space, Pontryagin space, Weyl family, boundary triple}
\subjclass[2010]{47A06; 47B50; 47B25; 46C20}
\date{\today}

\begin{abstract}
It is a classical result that
the Weyl function of a simple symmetric operator in
a Hilbert space determines a
boundary triple uniquely up to unitary equivalence.
We generalize this result to
a simple symmetric operator in a Pontryagin space,
where unitary equivalence is replaced
by the similarity realized via a standard unitary operator.
\end{abstract}
\maketitle
%%%%%%%%%%%%%%%%%%%%%%%%%%%%%%%%%%%%%%%%%%%%%%%%%%%%%%%%%%%%%%
%%%%%%%%%%%%%%%%%%%%%%%%%%%%%%%%%%%%%%%%%%%%%%%%%%%%%%%%%%%%%%
%%%%%%%%%%%%%%%%%%%%%%%%%%%%%%%%%%%%%%%%%%%%%%%%%%%%%%%%%%%%%%
%%%%%%%%%%%%%%%%%%%%%%%%%%%%%%%%%%%%%%%%%%%%%%%%%%%%%%%%%%%%%%
\section{Introduction}
%%%%%%%%%%%%%%%%%%%%%%%%%%%%%%%%%%%%%%%%%%%%%%%%%%%%%%%%%%%%%%
%%%%%%%%%%%%%%%%%%%%%%%%%%%%%%%%%%%%%%%%%%%%%%%%%%%%%%%%%%%%%%
%%%%%%%%%%%%%%%%%%%%%%%%%%%%%%%%%%%%%%%%%%%%%%%%%%%%%%%%%%%%%%
%%%%%%%%%%%%%%%%%%%%%%%%%%%%%%%%%%%%%%%%%%%%%%%%%%%%%%%%%%%%%%
The notion of a boundary triple
of a closed densely defined symmetric
operator in a Hilbert space goes back to
\cite{Derkach91,Gorbachuk91,Kochubei75,Calkin39}.
Associated with a boundary triple is the
Weyl function, a modern analogue of the
Krein $Q$-function \cite{Langer77,Krein71};
both are Nevanlinna functions \cite{Derkach06}.
In a Pontryagin space setting the Weyl
family is a generalized Nevanlinna family
\cite{Behrndt11,Hassi98}. Among other things,
one of the key features of the Weyl function
of a simple symmetric operator in a Hilbert space is that
it determines a boundary triple uniquely
up to unitary equivalence
\cite{Derkach17b,Derkach15a,Hassi13,Derkach91}.
We are not aware of this type of equivalence
result in a Krein (or even Pontryagin)
space setting.
Boundary triples that we discuss throughout
are ordinary boundary triples, meaning that a symmetric
operator has equal defect numbers. We do
not discuss $D$-triples
\cite{Mogilevski11,Mogilevski09,Mogilevski06}
nor we consider boundary triples
associated with pairs of operators
\cite{Hassi13,Malamud02}.

Let $T$ be a closed symmetric relation in a Krein space
$(\fH,[\cdot,\cdot])$ with fundamental symmetry $J$.
By equal (finite or infinite) defect numbers of $T$ we mean
equal defect numbers of
a closed symmetric relation $\fT\dfn JT$
in a Hilbert space $(\fH,[\cdot,J\cdot])$; see
\cite{Jonas95,Shmulyan74} for motives.
This ensures that $T$ has canonical self-adjoint
extensions or, equivalently
\cite[Proposition~3.4]{Behrndt11}, a boundary triple.
The next definition of a boundary triple is due to
\cite[Definition~6]{Derkach15};
\cf
\cite[Definition~2.1.1]{Behrndt20},
\cite[Definition~1.1]{Derkach17},
\cite[Definition~7.1]{Derkach17b},
\cite[Definition~2.1]{Derkach99}
in a Pontryagin or Hilbert space setting.
\begin{defn}\label{defn:obt}
A triple $\Pi_\Gamma=(\fL,\Gamma_0,\Gamma_1)$,
where $(\fL,\braket{\cdot,\cdot})$ is a Hilbert
space,
is an (ordinary) \textit{boundary triple} for (the adjoint)
$T^+$ if the boundary operator
$\Gamma=(\Gamma_0,\Gamma_1)\co T^+\lto\fL^2$
is surjective and the Green identity holds:
\[
[f,g^\prime]-[f^\prime,g]=
\braket{\Gamma_0\whf,\Gamma_1\whg}-
\braket{\Gamma_1\whf,\Gamma_0\whg}
\]
for all $\whf=(f,f^\prime)$, $\whg=(g,g^\prime)$
from $T^+$.
Associated with $\Pi_\Gamma$ is the
\textit{Weyl family}
\[
\bbC\ni z\mapsto M_\Gamma(z)\dfn\Gamma(zI)
\]
and the \textit{gamma-field}
\[
\bbC\ni z\mapsto \gamma_\Gamma(z)\dfn
\pi_1\whgm_\Gamma(z)\,,
\quad
\whgm_\Gamma(z)\dfn(\Gamma_0\vrt_{zI})^{-1}
\]
where $\pi_1\dfn\{((f,f^\prime),f)\vrt
(f,f^\prime)\in\fH \}$.
If a relation $M_\Gamma(z)$ in $\fL$ is
single-valued, that is an operator, $M_\Gamma$
is referred to as the Weyl function.
\end{defn}
It should be clear that $\Gamma(zI)=\Gamma(zI\cap T^+)$
and $\Gamma_0\vrt_{zI}=\Gamma_0\vrt_{zI\cap T^+}$.
\begin{defn}[\cite{Azizov03}]
A closed symmetric relation $T$ in a Pontryagin
space $\fH$ is called a \textit{simple symmetric operator}
if it has no non-real eigenvalues and
the closed linear span of defect subspaces,
$\bigvee\{\fN_z(T^+)\vrt
z\in\bbC_*\dfn\bbC\setm\bbR\}$, coincides with
$\fH$ as a linear set; the totality of
defect subspaces implies that $T$ is necessarily
an operator.
\end{defn}
In a Hilbert space setting, the similarity
result stated in the title of the present paper
reads as follows \cite[Theorem~7.122]{Derkach17b},
\cite[Corollary~3.10]{Derkach15a},
\cite[Theorem~2.16]{Hassi13}
\cite[Corollary~1]{Derkach91}.
\begin{thm}\label{thm:0}
Let $T$ and $T^\prime$ be densely defined
simple symmetric operators in Hilbert spaces
$\fH$ and $\fH^\prime$, each with the same
equal defect numbers.
Let $\Pi_\Gamma=(\fL,\Gamma_0,\Gamma_1)$
and $\Pi_{\Gamma^\prime}=(\fL,\Gamma^\prime_0,
\Gamma^\prime_1)$ be the boundary triples
for $T^*$ and $T^{\prime\,*}$, with
the corresponding Weyl families $M_\Gamma$
and $M_{\Gamma^\prime}$. Then $M_\Gamma(z)=
M_{\Gamma^\prime}(z)$ for all $z\in\bbC_*$
iff the boundary triples
$\Pi_{\Gamma}$ and $\Pi_{\Gamma^\prime}$
are unitarily equivalent.
\end{thm}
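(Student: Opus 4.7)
The ``if'' direction is a routine check: if $U\co\fH\lto\fH^\prime$ is unitary with $\what{U}\,T^*=T^{\prime\,*}$ (where $\what{U}(f,f^\prime)\dfn(Uf,Uf^\prime)$ acts componentwise on graphs) and $\Gamma^\prime_i\what{U}=\Gamma_i$ for $i=0,1$, then $\what{U}$ maps $zI\cap T^*$ onto $zI\cap T^{\prime\,*}$ and carries boundary values identically, whence $M_\Gamma(z)=M_{\Gamma^\prime}(z)$. The substantive content is the converse.

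The main tool is the standard identity linking the gamma-field and the Weyl function,
\[
\gamma_\Gamma(w)^*\gamma_\Gamma(z)=\frac{M_\Gamma(z)-M_\Gamma(w)^*}{z-\overline{w}}\,,\quad z,w\in\bbC_*\,,
\]
obtained by applying the Green identity to $(\gamma_\Gamma(z)h,z\gamma_\Gamma(z)h)$ and $(\gamma_\Gamma(w)h^\prime,w\gamma_\Gamma(w)h^\prime)$ in $T^*$, together with $\Gamma_0\whgm_\Gamma(z)=I_\fL$ and $\Gamma_1\whgm_\Gamma(z)=M_\Gamma(z)$. On the linear span of defect subspaces I would set
\[
U\co\sum_k\gamma_\Gamma(z_k)h_k\mapsto\sum_k\gamma_{\Gamma^\prime}(z_k)h_k\,,\quad h_k\in\fL\,,~z_k\in\bbC_*\,.
\]
The displayed identity combined with the hypothesis $M_\Gamma=M_{\Gamma^\prime}$ gives
\[
\braket{\gamma_\Gamma(z)h,\gamma_\Gamma(w)h^\prime}_\fH=\braket{\gamma_{\Gamma^\prime}(z)h,\gamma_{\Gamma^\prime}(w)h^\prime}_{\fH^\prime}\,,
\]
so $U$ is well defined and isometric; simplicity of $T$ and of $T^\prime$ then lets one extend $U$ to a unitary operator from $\fH$ onto $\fH^\prime$.

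It remains to upgrade $U$ to a unitary equivalence of the boundary triples. Fixing any $z_0\in\bbC_*$ and using the self-adjoint extension $A\dfn\ker\Gamma_0$ together with the von Neumann-type decomposition $T^*=A\dsum\what{\fN}_{z_0}(T^*)$ (and the analogous one for $T^{\prime\,*}$), one reduces $\what{U}T^*=T^{\prime\,*}$ to its behavior on defect graphs, where by construction $\what{U}(\gamma_\Gamma(z)h,z\gamma_\Gamma(z)h)=(\gamma_{\Gamma^\prime}(z)h,z\gamma_{\Gamma^\prime}(z)h)$; the boundary matchings $\Gamma_i=\Gamma^\prime_i\what{U}$ then follow from $\Gamma_0\whgm_\Gamma=I_\fL$, $\Gamma_1\whgm_\Gamma=M_\Gamma=M_{\Gamma^\prime}$, and the symmetry of $A$. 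The principal obstacle in this last step is the simultaneous bookkeeping: one must exploit simplicity not merely to extend $U$ to $\fH$, but also to propagate the intertwining from the defect parts to the whole graph of $T^*$ while keeping both $\Gamma_0$ and $\Gamma_1$ compatible; this is precisely the place where the absence of simplicity (or the weaker structure afforded by a Pontryagin or Krein space) would be expected to force a departure from ordinary unitarity, motivating the generalization announced in the abstract.
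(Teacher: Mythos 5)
Your proposal is correct in outline and is the classical Derkach--Malamud argument; note that the paper does not actually prove Theorem~\ref{thm:0} --- it quotes it from \cite{Derkach91,Hassi13,Derkach15a,Derkach17b} --- and instead proves the Krein-space generalization, Theorem~\ref{thm:0ab}. The first half of your argument coincides with Steps~2--4 there: the operator $U$ is defined by $U\supseteq\gamma_{\Gamma^\prime}(z)\gamma_\Gamma(z)^{-1}$, shown to be well defined and isometric via the Green identity and $M_\Gamma=M_{\Gamma^\prime}$, and extended by simplicity/minimality; in the Hilbert-space case well-definedness is indeed free from isometry, whereas in the indefinite setting the paper must invoke disjointness of defect subspaces (Lemma~\ref{lem:sfN}, hence property $(P)$). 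Where the two routes diverge is the final step. You match the boundary maps directly on the decomposition $T^*=\ker\Gamma_0\dsum\what{\fN}_{z_0}(T^*)$; this works, but the phrase ``the symmetry of $A$'' undersells what is needed: to get $\Gamma_1^\prime\what{U}\whf_0=\Gamma_1\whf_0$ for $\whf_0\in\ker\Gamma_0$ you should first derive the resolvent intertwining $U(A-zI)^{-1}=(A^\prime-zI)^{-1}U$ (so that $\what{U}(\ker\Gamma_0)=\ker\Gamma_0^\prime$) and then pin down $\Gamma_1\whf_0$ through the Green identity paired against $\whgm_\Gamma(z)h$, using again $M_\Gamma=M_{\Gamma^\prime}$. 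The paper proceeds differently at this point (Steps~5--6): it uses the Krein--Naimark formula to get $T_1^\prime=\wtU(T_1)$, hence $T^\prime=\wtU(T)$, then invokes Theorem~\ref{thm:l} to produce a standard unitary $V\in\cV(\Gamma,\Gamma^\prime)$ and Theorem~\ref{thm:r1} together with Lemma~\ref{lem:Wl} to show that $W=\wtU^{-1}V$ preserves $T$ and every $\what{\fN}_z(T^+)$, whence $W$ is forced to be of diagonal form. That heavier machinery buys nothing extra in the Hilbert-space case but is exactly what survives when unitary equivalence degrades to similarity in a Pontryagin space; your more elementary finish is perfectly adequate for Theorem~\ref{thm:0} itself.
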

By definition \cite[Definition~2.14]{Hassi13},
see also Definition~\ref{defn:similar},
the boundary triples
$\Pi_{\Gamma}$ and $\Pi_{\Gamma^\prime}$
are unitarily equivalent if there is a
unitary operator $U\co\fH\lto\fH^\prime$ such that
$\Gamma=\Gamma^\prime\bigl(\begin{smallmatrix}
U & 0 \\ 0 & U \end{smallmatrix}\bigr)$; in this
case an (arbitrary) canonical extension of
$T^\prime=UTU^{-1}$
is unitarily equivalent to its counterpart from the
scale $T\subseteq T^*$.

Theorem~\ref{thm:0} can be rephrased as
follows: The Weyl function on $\bbC_*$
determines a boundary
triple (in particular a canonical extension)
of a densely defined simple symmetric operator with
equal defect numbers uniquely up to unitary
equivalence. Essentially because not every
self-adjoint relation in a Krein (or Pontryagin)
space has a nonempty resolvent set,
here we prove a ``local'' version
of Theorem~\ref{thm:0} in a Krein space
setting; see Theorem~\ref{thm:0ab}.
In this setting unitary equivalence
of boundary triples $\Pi_\Gamma$ and
$\Pi_{\Gamma^\prime}$ is replaced
by the similarity realized via a standard unitary
operator $U$ from a Krein space $\fH$
to a Krein space $\fH^\prime$; one says that
$\Pi_\Gamma$ and $\Pi_{\Gamma^\prime}$ are similar.
A ``global''
version of Theorem~\ref{thm:0ab} is therefore the next
corollary, stated as a theorem.
\begin{thm}\label{thm:1}
The Weyl function on $\bbC_*$
determines a boundary
triple (in particular a canonical extension)
of a densely defined simple symmetric operator
in a Pontryagin space, and which has
equal defect numbers, uniquely up to similarity.
\end{thm}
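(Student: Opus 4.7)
The plan is to deduce Theorem~\ref{thm:1} from the local Theorem~\ref{thm:0ab}, exploiting the fact that any self-adjoint relation in a Pontryagin space has at most finitely many non-real spectral points. The forward implication is immediate from Definition~\ref{defn:similar}: a standard unitary $U\co\fH\lto\fH^\prime$ intertwining the boundary maps sends $zI\cap T^+$ onto $zI\cap T^{\prime\,+}$ for every $z\in\bbC$, and hence forces $M_\Gamma(z)=M_{\Gamma^\prime}(z)$ on $\bbC_*$.

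For the converse, suppose $M_\Gamma(z)=M_{\Gamma^\prime}(z)$ on all of $\bbC_*$. Put $A_0\dfn\ker\Gamma_0$ and $A_0^\prime\dfn\ker\Gamma_0^\prime$; since these canonical self-adjoint extensions act in Pontryagin spaces, the sets $\sigma(A_0)\cap\bbC_*$ and $\sigma(A_0^\prime)\cap\bbC_*$ are finite. On the cofinite (in $\bbC_*$) open set $\Omega\dfn(\rho(A_0)\cap\rho(A_0^\prime))\cap\bbC_*$ both $M_\Gamma$ and $M_{\Gamma^\prime}$ are operator-valued and agree. Applying Theorem~\ref{thm:0ab} to this $\Omega$ then yields a standard unitary $U$ from the closed linear span of $\ran\gamma_\Gamma(z)$, $z\in\Omega$, onto its counterpart in $\fH^\prime$, together with the required intertwining at every $z\in\Omega$.

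Simplicity of $T$ yields $\fH=\bigvee_{z\in\bbC_*}\fN_z(T^+)$, and the same holds on the primed side. Since $\bbC_*\setm\Omega$ is finite and $z\mapsto\gamma_\Gamma(z)$ depends analytically on $z$ in each connected component of $\rho(A_0)\cap\bbC_*$, the closed span over $\Omega$ already exhausts $\fH$; the contributions of the finitely many missing points are recovered by analytic continuation. Hence $U$ is densely defined with dense range, and, since boundedness of a Krein-isometric bijection is automatic in the Pontryagin setting, it extends by continuity to a standard unitary $U\co\fH\lto\fH^\prime$. The intertwining $\Gamma=\Gamma^\prime\bigl(\begin{smallmatrix}U&0\\0&U\end{smallmatrix}\bigr)$ is verified first on the core of pairs $(\gamma_\Gamma(z)\ell,z\gamma_\Gamma(z)\ell)\in T^+$ with $z\in\Omega$, $\ell\in\fL$, and is then propagated to all of $T^+$ via the von Neumann-type decomposition $T^+=A_0\hsum\whfN_z(T^+)$ for any $z\in\Omega$.

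The main obstacle is handling the finitely many points of $\bbC_*\setm\Omega$, at which the Weyl family need not be single-valued but at which simplicity still refers to defect subspaces $\fN_z(T^+)$. Finiteness of this exceptional set (the Pontryagin hypothesis) together with analyticity of the gamma field supplies the bridge from the local similarity in Theorem~\ref{thm:0ab} to the global similarity asserted here.
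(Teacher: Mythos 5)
Your overall strategy coincides with the paper's: both reduce Theorem~\ref{thm:1} to Theorem~\ref{thm:0ab}. The difference lies in how hypothesis $(a)$ of that theorem is met. The paper proves the stronger fact (Theorem~\ref{thm:ex}, resting on Lemmas~\ref{lem:Os} and~\ref{lem:exN}) that $\rho(T_0)\supseteq\delta(T)=\bbC_*$ for \emph{every} canonical self-adjoint extension of a densely defined simple symmetric operator in a Pontryagin space, so that one may take $\Omega=\bbC_*$ outright; you instead shrink $\Omega$ to a cofinite subset of $\bbC_*$ and compensate by re-verifying the minimality hypothesis $(b)$ over this smaller set. This is where your argument has a genuine gap. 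A small point first: hypothesis $(a)$ of Theorem~\ref{thm:0ab} requires $\Omega\subseteq\rho(T_0)\cap\rho(T_1)$, so you must also delete the (finitely many) non-real spectral points of $T_1=\ker\Gamma_1$; that is easily repaired. The serious problem is your justification of $(b)$: you claim the defect subspaces at the finitely many excluded points are ``recovered by analytic continuation'' of $\gamma_\Gamma$. But those points are excluded precisely because they do \emph{not} belong to $\rho(T_0)$ (resp.\ $\rho(T^\prime_0)$), i.e., they lie outside the set on which $\gamma_\Gamma$ is an analytic operator function satisfying $\fN_z(T^+)=\ran\gamma_\Gamma(z)$. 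Analytic continuation of $\gamma_\Gamma$ therefore says nothing about $\fN_{z_0}(T^+)$ for $z_0\in\bbC_*\setm\Omega$, and the implication
\[
\bigvee\{\fN_z(T^+)\vrt z\in\bbC_*\}=\fH
\quad\Longrightarrow\quad
\bigvee\{\fN_z(T^+)\vrt z\in\Omega\}=\fH
\]
is left unproved. (It can be established, e.g., by passing, for each excluded $z_0$, to another canonical self-adjoint extension whose resolvent set contains $z_0$ and arguing with \emph{its} gamma-field; but some such argument must be supplied.)

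The paper sidesteps this issue entirely: by Lemma~\ref{lem:exN} every $N\in\cN$ is a densely defined standard symmetric operator without eigenvalues, whence by Lemma~\ref{lem:Os} $\bbC_*\cap\rho(T_0)=\delta(T)\cap\delta(N)=\bbC_*$, so the exceptional set you are trying to handle is in fact empty. That computation is the real content behind Theorem~\ref{thm:1}, and it is exactly what your proposal omits without an adequate substitute. Your forward implication is correct, and your re-sketch of Steps 2--6 of the proof of Theorem~\ref{thm:0ab} (isometry of $U_z$, extension by continuity, propagation of the intertwining) is unnecessary once that theorem is invoked as a black box; what is needed instead is a sound verification of its hypotheses for your cofinite $\Omega$.
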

The denseness ensures that
the resolvent set $\rho(T_0)$ of a canonical self-adjoint
extension $T_0$ of $T$, which is necessarily an operator,
contains at least one non-real point
(\eg \cite[Theorem~7.2]{Bognar74}, also
\cite{Hassi16,Dijksma04a,Langer00,Hassi98}). But
this is in general not enough in order that
$\rho(T_0)\supseteq\bbC_*$,
which is
one of the key points in Theorem~\ref{thm:1}.
The latter inclusion is a particular case
of Theorem~\ref{thm:ex} stated for
standard symmetric operators;
recall \cite[Proposition~2.4]{Azizov03}
that a simple $T$ is a standard $T$ without
eigenvalues. The example of $T$ as in Theorem~\ref{thm:1}
can be found in \cite{Strauss12}.
Theorem~\ref{thm:1}, on the other hand,
admits a generalization, Corollary~\ref{cor:P}, but
currently
we do not know an example of this more
general class of operators.

In a summary of the reminder of this paper
we briefly discuss the main tools used for
the proofs of Theorems~\ref{thm:0ab},
\ref{thm:1}, as well as other results
of independent interest.
In Section~\ref{sec:nota} some standard
notation is recalled.
In Section~\ref{sec:decTp},
Theorem~\ref{thm:1-1},
we establish a 1-1 correspondence,
$T_0=T\hop N$ and $N=T_0\cap T^\bot$, between
the set, $\Self(T)$, of canonical self-adjoint
extensions $T_0$ of a closed symmetric
relation $T$ with equal defect numbers and the set,
$\cN$, of closed symmetric relations $N$ in $\fH$
such that $N\subseteq T^+\cap T^\bot$ and
$\ran(JN\pm\img I)=\fN_{\pm\img}(\fT^*)$
(or, equivalently, $T\hop N$ is hyper-maximal neutral);
if $T$ is a densely defined operator
that commutes with the fundamental symmetry
$J$, a $T_0\leftrightarrow N$
can be given a known form
\cite{Hassi13a,Kuzhel11,Albeverio09}.
$\cN$ is the key notion
in proving $\rho(T_0)\supseteq\bbC_*$ in
Theorem~\ref{thm:1}; see Section~\ref{sec:semi}
for the details.
Recall that in general one can only expect
$\rho(T_0)\supseteq\bbC_*$ modulo a finite-dimensional
set.

Operator solutions $V$ to
$\Gamma^\prime=\Gamma V^{-1}$
are characterized in \cite{Popovici13,Sandovici13}
in terms of a Hamel basis.
By using $T_0\leftrightarrow N$,
here we characterize $V$ by means of
``inverses'' of boundary operators
$\Gamma=(\Gamma_0,\Gamma_1)$
(Definition~\ref{defn:inverse}).
In particular,
there is a standard unitary operator
$V$ iff the symmetric relations $T$ and $T^\prime$ are
isomorphic (as Hilbert spaces), see
Theorem~\ref{thm:l} in Section~\ref{sec:obt}.
In this case $V$ is the product of
archetypical unitary relations, in
the terminology of
\cite[Section~4.2]{Wietsma12}.

If $\Gamma^\prime=\Gamma V^{-1}$
for a suitable unitary relation $V$,
a necessary and sufficient condition that
$M_\Gamma(z)=M_{\Gamma^\prime}(z)$
is given in Lemma~\ref{lem:Wl},
Section~\ref{sec:Weyl};
the result applies to
isometric boundary pairs
$(\fL,\Gamma)$ and $(\fL,\Gamma^\prime)$,
and is a generalization of a similar
result in \cite{Jursenas21a}.

The similarity of minimal realizations,
provided the associated generalized
Nevanlinna functions coincide, is
shown in \cite[Theorem~3.2]{Hassi98}.
From here one can deduce that
$T$ and $T^\prime$ are isomorphic.
In Theorem~\ref{thm:0ab}, however, we
work in a Krein space, but the ideas
of this part of the proof
are pretty much the same,
see also \cite[Theorem~7.122]{Derkach17b},
\cite[Theorem~2.2]{Langer77}. As a result,
by Theorem~\ref{thm:l} there is a
standard unitary operator $V$ such that
$\Gamma^\prime=\Gamma V^{-1}$, and in
particular $T^\prime=V(T)$, and by
Lemma~\ref{lem:Wl} (more specifically,
by Theorem~\ref{thm:r1})
this $V$ is of the form
$(\begin{smallmatrix}
U & 0 \\ 0 & U \end{smallmatrix}\bigr)W$
for some standard unitary operator $W$.
The totality of defect subspaces further
yields that $W$ is of the diagonal form,
and therefore the
triples $\Pi_\Gamma$ and $\Pi_{\Gamma^\prime}$
are similar.

We remark that, unlike $T$ in
Theorem~\ref{thm:0}, a closed symmetric
operator $T$ in a Krein space,
Theorem~\ref{thm:0ab}, need not be
densely defined. Instead, such $T$ satisfies
a slightly more general property $(P)$,
see Definition~\ref{defn:sub}. On the other hand,
$T$ in a Hilbert space has property $(P)$
iff it is densely defined, by Lemma~\ref{lem:P3}
in Appendix~\ref{app:A}. In the appendix
we list some lemmas that we use in the body of the text,
but they can be used in a broader context too.
%%%%%%%%%%%%%%%%%%%%%%%%%%%%%%%%%%%%%%%%%%%%%%%%%%%%%%%%%%%%%%
%%%%%%%%%%%%%%%%%%%%%%%%%%%%%%%%%%%%%%%%%%%%%%%%%%%%%%%%%%%%%%
%%%%%%%%%%%%%%%%%%%%%%%%%%%%%%%%%%%%%%%%%%%%%%%%%%%%%%%%%%%%%%
%%%%%%%%%%%%%%%%%%%%%%%%%%%%%%%%%%%%%%%%%%%%%%%%%%%%%%%%%%%%%%
\section{Notation and standard terminology}\label{sec:nota}
%%%%%%%%%%%%%%%%%%%%%%%%%%%%%%%%%%%%%%%%%%%%%%%%%%%%%%%%%%%%%%
%%%%%%%%%%%%%%%%%%%%%%%%%%%%%%%%%%%%%%%%%%%%%%%%%%%%%%%%%%%%%%
%%%%%%%%%%%%%%%%%%%%%%%%%%%%%%%%%%%%%%%%%%%%%%%%%%%%%%%%%%%%%%
%%%%%%%%%%%%%%%%%%%%%%%%%%%%%%%%%%%%%%%%%%%%%%%%%%%%%%%%%%%%%%
The background and fundamentals of the theory
of operators in spaces with an indefinite
metric can be found in
\cite{Azizov89,Azizov79,Shmulyan74,Krein71,Gokhberg57,Iokhvidov56}.
The theory of (linear) relations is developed
in \cite{Behrndt20,Hassi09,Hassi07},
see also references therein. A Krein space
$\fH=(\fH,[\cdot,\cdot])$ is also termed a
$J$-space, where $J$ is the fundamental symmetry.
With $J$ one associates the fundamental symmetry
\begin{equation}
\whJ\dfn\begin{pmatrix}0 & -\img J \\
\img J & 0 \end{pmatrix}\co
\begin{matrix}
\fH \\ \op \\ \fH
\end{matrix}\lto
\begin{matrix}
\fH \\ \op \\ \fH
\end{matrix}
\label{eq:whJ}
\end{equation}
and an indefinite inner product
\begin{equation}
\begin{split}
[\whf,\whg]=
-\img[f,g^\prime]+\img[f^\prime,g]\,,
\\
\whf=(f,f^\prime)\,,\quad
\whg=(g,g^\prime)
\end{split}
\label{eq:inner}
\end{equation}
in $\fH^2$, where $[\cdot,\cdot]$ on the right is an
indefinite inner product in $\fH$;
apparently $[\cdot,\cdot]$ is conjugate-linear
in the first factor.
In our work
there will be no confusion in the notation of
$[\cdot,\cdot]$, so there is no need to use
additional subscripts. However, if $\fH^\prime$
is another Krein space with
fundamental symmetry $J^\prime$, then an indefinite
inner product is denoted by $[\cdot,\cdot]^\prime$,
with the convention as in \eqref{eq:whJ}, \eqref{eq:inner}.
A Krein space $(\fH^2,[\cdot,\cdot])$
is a $\whJ$-space denoted by $\fK$, and similarly
for a Krein space
$\fK^\prime=\fH^{\prime\,2}$ with
fundamental symmetry $\whJ^\prime$.
If $\fL=(\fL,\braket{\cdot,\cdot})$ is a
Hilbert space, the Hilbert sum $\fL^2$ is made into
a $\whJ_\circ$-space, denoted
$\fK_\circ$, as follows. The fundamental symmetry
$\whJ_\circ$ is defined by
\eqref{eq:whJ}, with $J$, $\fH$
replaced by $I$ (identity), $\fL$, and an indefinite
inner product in $\fK_\circ$
is defined by \eqref{eq:inner}, with
$[\cdot,\cdot]$ on the right-hand side replaced
by the scalar product $\braket{\cdot,\cdot}$.

The orthogonality
in $\fH$ (or $\fK$)
with respect to an indefinite metric
$[\cdot,\cdot]$ is indicated by $[\bot]$,
and with respect to the associated Hilbert
space metric $[\cdot,J\cdot]$
(or $[\cdot,\whJ\cdot]$) by $\bot$.
If $\cL$ is a subspace
of a Krein space, then $P_\cL$ is the associated orthogonal
(with respect to a Hilbert space metric) projection
onto $\cL$.

A relation $T$ in a Krein space $\fH$
with fundamental symmetry $J$ is symmetric or
self-adjoint if $T\subseteq T^+$ or $T=T^+$.
The (Krein space) adjoint
$T^+\dfn JT^*J$,
where $T^*$ is the standard (\ie Hilbert space)
adjoint. If $T$ is a symmetric relation,
the set of its canonical, \cf \cite{Behrndt20,Gorbachuk91},
extensions
$\wtT$, \ie such that $T\subseteq\wtT\subseteq T^+$,
is denoted by $\Ext(T)$. The set of those
$\wtT\in\Ext(T)$ that are self-adjoint in $\fH$
is denoted by $\Self(T)$.

A bounded everywhere defined relation
$T$ from a Krein space $\fH$ to a Krein
space $\fH^\prime$ is a bounded everywhere
defined operator, denoted by
$T\in\cB(\fH,\fH^\prime)$ or by
$T\in\cB(\fH)$ if $\fH\equiv\fH^\prime$.
Two closed relations $T$ and $T^\prime$
in Krein spaces $\fH$ and $\fH^\prime$
are isomorphic if the Hilbert spaces
$(T,[\cdot,\whJ\cdot])$ and
$(T^\prime,[\cdot,\whJ^\prime\cdot]^\prime)$
are isomorphic, that is, if there is
a homeomorphism $\tau\in\cB(T,T^\prime)$.

A relation $V$ from a Krein space $\fK$
to a Krein space $\fK^\prime$
is isometric or unitary
if the inverse relation
$V^{-1}\subseteq V^+$ or
$V^{-1}=V^+$; the Krein space adjoint
$V^+\dfn \whJ V^*\whJ^\prime$, where $V^*$ is
the Hilbert space adjoint.
Throughout, inverses are assumed in the sense
of relations unless specified otherwise.
A unitary $V$ is a standard unitary operator
if $V\in\cB(\fK,\fK^\prime)$ or, equivalently,
$V^{-1}\in\cB(\fK^\prime,\fK)$;
see \cite[Definition~2.5, Corollary~2.7]{Derkach09}.

The domain, range, kernel, multivalued part
of a relation $T$ in a Krein space $\fH$
are denoted by $\dom T$, $\ran T$, $\ker T$,
$\mul T$, respectively.
The domain restriction to $\cL\subseteq\fH$ of $T$
is denoted by $T\vrt_\cL\dfn T\cap(\cL\times\fH)$;
clearly $T\vrt_\cL=T_{\cL\cap\dom T}$, see \eg
$M_\Gamma$ and $\whgm_\Gamma$ in
Definition~\ref{defn:obt}.

The regularity domain (or else the set of points
of regular type) of
a closed symmetric relation $T$ in a Krein
space $\fH$ is denoted by
$\hrho(T)$; it is the set of those $z\in\bbC$
such that the eigenspace $\fN_z(T)\dfn\ker(T-zI)$
is trivial and the range $\ran(T-zI)$ is
a subspace, \ie a closed linear subset of $\fH$.
The resolvent set (or else the set of regular points)
of $T$ is denoted by $\rho(T)$; it is the set
of those $z\in\hrho(T)$ such that
$\ran(T-zI)=\fH$.
The point spectrum $\sigma_p(T)$ of $T$
is the set of those $z\in\bbC$ such that
$\fN_z(T)$ is nontrivial.
The part of the point spectrum in
$\bbC_*\dfn\bbC\setm\bbR$
of $T$ is denoted by
$\sigma^0_p(T)\dfn\bbC_*\cap\sigma_p(T)$.
With an eigenspace $\fN_z(T)$ one associates
(the graph of) an operator $\whfN_z(T)\dfn zI\cap T$.

Let $T$ and $T^\prime$ be relations in a
Krein space $\fH$.
The componentwise sum
$T\hsum T^\prime$ is the (linear) set of
those $(f+g,f^\prime+g^\prime)$ such that
$(f,f^\prime)\in T$ and $(g,g^\prime)\in T^\prime$.
The orthogonal componentwise sum
$T\hop T^\prime$ is the componentwise sum
which indicates that $T\bot T^\prime$.
The operatorwise
sum (or simply the sum) $T+T^\prime$
is the set of those $(f,f^\prime+g^\prime)$
such that $(f,f^\prime)\in T$ and
$(f,g^\prime)\in T^\prime$. The above notions
extend naturally to relations acting from
a Krein space to (a possibly another) Krein space.
If $\cL$ and $\cL^\prime$ are linear
subsets of $\fH$, $\cL\dsum\cL^\prime$
is the disjoint sum $\cL+\cL^\prime=
\{f+g\vrt f\in\cL\,;\,g\in\cL^\prime\} $ of sets,
termed the direct sum. In a canonical decomposition
of a Krein space we use the symbol $[\op]$ to
indicate the orthogonality with respect to both
$[\cdot,\cdot]$ and $[\cdot,J\cdot]$.
%%%%%%%%%%%%%%%%%%%%%%%%%%%%%%%%%%%%%%%%%%%%%%%%%%%%%%%%%%%%%%
%%%%%%%%%%%%%%%%%%%%%%%%%%%%%%%%%%%%%%%%%%%%%%%%%%%%%%%%%%%%%%
%%%%%%%%%%%%%%%%%%%%%%%%%%%%%%%%%%%%%%%%%%%%%%%%%%%%%%%%%%%%%%
%%%%%%%%%%%%%%%%%%%%%%%%%%%%%%%%%%%%%%%%%%%%%%%%%%%%%%%%%%%%%%
\section{Decomposing the adjoint of a symmetric relation
in a Krein space into the orthogonal sum}
\label{sec:decTp}
%%%%%%%%%%%%%%%%%%%%%%%%%%%%%%%%%%%%%%%%%%%%%%%%%%%%%%%%%%%%%%
%%%%%%%%%%%%%%%%%%%%%%%%%%%%%%%%%%%%%%%%%%%%%%%%%%%%%%%%%%%%%%
%%%%%%%%%%%%%%%%%%%%%%%%%%%%%%%%%%%%%%%%%%%%%%%%%%%%%%%%%%%%%%
%%%%%%%%%%%%%%%%%%%%%%%%%%%%%%%%%%%%%%%%%%%%%%%%%%%%%%%%%%%%%%
Let $T$ be a closed symmetric relation
in a Krein space $\fH=(\fH,[\cdot,\cdot])$
with fundamental symmetry $J$.
Then a closed relation
\[
\fT\dfn JT
\]
is symmetric in a Hilbert space $(\fH,[\cdot,J\cdot])$,
with the adjoint $\fT^*=JT^+$.
Throughout (except possibly in
Lemma~\ref{lem:Wl}) $T$ has equal defect numbers
$(d,d)$, $d\leq\infty$, by which one means
equal defect numbers of $\fT$:
\[
d=\dim\fN_{\img}(\fT^*)=
\dim\fN_{-\img}(\fT^*)\,.
\]
By hypothesis there exists $T_0\in\Self(T)$. With $T_0$
one associates a relation
\[
\fT_0\dfn JT_0\in\Self(\fT)
\]
which is self-adjoint
as a relation in a Hilbert space
$(\fH,[\cdot,J\cdot])$.
(Recall that there is a canonical self-adjoint extension
of $T$ iff there is a canonical self-adjoint extension
of $\fT$, that is, iff
$\fT$ has equal defect numbers $(d,d)$.)
While $\fT_0$ always has a nonempty resolvent
set, $\rho(\fT_0)\supseteq\bbC_*$, it is not
assumed apriori that $\rho(T_0)$ is nonempty.

The key notion we study in this paper is:
\begin{defn}
Denote by $\cN$ the set of all neutral
subspaces $N$ of $\fK$ such that
$N\subseteq T^+\cap T^\bot$ and such that
$T\hop N$ is a hyper-maximal neutral subspace
of $\fK$.
\end{defn}
Let
\[
\fN\dfn JN\,.
\]
According to Lemma~\ref{lem:eqGH} therefore,
$N\in\cN$ iff $N$ is a closed
symmetric relation in $\fH$ such that
\begin{SL}
\item[(a)]
$N\subseteq T^+\cap T^\bot$ and
\item[(b)]
$\ran(\fN+zI)=\fN_{z}(\fT^*)$ for both
$z=-\img$ and $z=\img$.
\end{SL}
Moreover
\begin{thm}\label{thm:1-1}
A $T_0\in\Self(T)$ is in 1-1
correspondence with a $N\in\cN$, which is
given by
\begin{equation}
T_0=T_0(T,N)\dfn T\hop N\,,\quad
N=N(T,T_0)\dfn T_0\cap T^\bot\,.
\label{eq:11}
\end{equation}
\end{thm}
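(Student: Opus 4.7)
The plan is to exploit the standard identification between self-adjoint relations in $\fH$ and hyper-maximal neutral subspaces of the $\whJ$-space $\fK=\fH^2$, combined with an orthogonal decomposition in the Hilbert space $(\fK,[\cdot,\whJ\cdot])$. The underlying reason this identification works is that, by a direct computation from \eqref{eq:inner}, $[\whf,\whf]=2\,\Im[f,f']$, so a closed subspace $L\subseteq\fK$ is neutral iff the corresponding relation in $\fH$ is symmetric, and hyper-maximality $L=L^{[\bot]_\fK}$ translates into $T_0=T_0^+$. This is the dictionary that makes $\Self(T)$ coincide with the set of hyper-maximal neutral subspaces of $\fK$ that contain $T$.

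Given $N\in\cN$, I would set $T_0:=T\hop N$; by the very definition of $\cN$ this is a hyper-maximal neutral subspace of $\fK$, hence self-adjoint in $\fH$, while $T\subseteq T_0$ is tautological, so $T_0\in\Self(T)$. Conversely, given $T_0\in\Self(T)$, set $N:=T_0\cap T^\bot$. Viewing $T\subseteq T_0$ as a nested pair of closed subspaces of the Hilbert space $(\fK,[\cdot,\whJ\cdot])$, I would apply the orthogonal decomposition
\[
T_0 \;=\; T\hop(T_0\om T)\;=\;T\hop(T_0\cap T^\bot)\;=\;T\hop N,
\]
which at once shows that $T\hop N=T_0$ is hyper-maximal neutral. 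From $T_0=T_0^+\subseteq T^+$ and the defining inclusion $N\subseteq T^\bot$ one then reads off $N\subseteq T^+\cap T^\bot$, while $N$ inherits neutrality from $T_0$; invoking Lemma~\ref{lem:eqGH} (to pass back to the conditions (a), (b) if desired) yields $N\in\cN$.

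Bijectivity is immediate from the same decomposition: if $N\in\cN$ and $T_0:=T\hop N$, then $(T\hop N)\cap T^\bot=N$, because $T\bot N$ and $T\cap T^\bot=\{0\}$; if $T_0\in\Self(T)$ and $N:=T_0\cap T^\bot$, the displayed decomposition gives $T\hop N=T_0$. The single substantive ingredient throughout is the orthogonal decomposition of the closed subspace $T_0$ of $(\fK,[\cdot,\whJ\cdot])$, which is standard Hilbert space theory; I do not anticipate a genuine obstacle, but the piece needing the most care is the consistent passage between the Krein space picture (neutral subspaces of $\fK$, hyper-maximality of $T\hop N$) and the Hilbert space picture in $\fH$ (symmetry of $N$ and the defect-range conditions (a), (b)), which is exactly what Lemma~\ref{lem:eqGH} is set up to handle.
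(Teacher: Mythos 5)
Your proof is correct and is essentially the argument the paper intends: the paper prints no proof of Theorem~\ref{thm:1-1}, relying on exactly the dictionary you describe (a subspace of $\fK$ is neutral iff it is a symmetric relation, and hyper-maximal neutral iff self-adjoint, since $T^+$ coincides with the $[\bot]$-companion of $T$ in $(\fK,[\cdot,\cdot])$) together with the orthogonal decomposition $T_0=T\hop(T_0\cap T^\bot)$ in the Hilbert space $(\fK,[\cdot,\whJ\cdot])$. This is also precisely the content of the Coddington-type statement recalled in the remarks after Proposition~\ref{prop:N}, so no further comparison is needed.
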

\begin{rem}
Theorem~\ref{thm:1-1} admits a generalization:
There is a 1-1 correspondence
between the set of symmetric extensions
$T_0\in\Ext(T)$ and the set of
symmetric relations $N$ in $\fH$
such that $N\subseteq T^+\cap T^\bot$;
the correspondence is given again by
\eqref{eq:11}.
\end{rem}
Given $T_0\in\Self(T)$,
$N=N(T,T_0)$ has a number of equivalent
descriptions.
In Lemma~\ref{lem:exN} we use this:
\begin{equation}
(f,f^\prime)\in N\quad\text{iff}\quad
(f,f^\prime)\in T_0\quad\text{and}\quad
Jf^\prime+\img f\in\fN_\img(\fT^*)
\label{eq:exN}
\end{equation}
(or, equivalently,
$Jf^\prime-\img f\in\fN_{-\img}(\fT^*)$).
Another description
\[
N=(I+K_{T_0})(\fK_+\vrt_{\fN_\img(\fT^*)})
\]
is given in terms of
the angular operator $K_{T_0}\in\cB(\fK_+,\fK_-)$
(see \cite{Azizov89} for terminology),
which is a standard unitary operator
from a positive subspace $\fK_+$ to
a negative subspace $\fK_-$ from
the canonical decomposition of a Krein
space $\fK=\fK_+[\hop]\fK_-$ with fundamental
symmetry $\whJ$; it is of the matrix form
\[
K_{T_0}=
\begin{pmatrix}
-C_{\fT_0} & 0 \\
0 & J C_{\fT_0}J
\end{pmatrix}\Bigr\vert_{\fK_+}\,.
\]
A unitary operator
$C_{\fT_0}$ in a Hilbert space $(\fH,[\cdot,J\cdot])$
\[
C_{\fT_0}\dfn
\{(f^\prime+\img f,f^\prime-\img f)\vrt
(f,f^\prime)\in\fT_0 \}
\]
is the Cayley transform of $\fT_0$.
Notice that the Cayley transform of $\fN$
\[
C_{\fN}\dfn
\{(f^\prime+\img f,f^\prime-\img f)\vrt
(f,f^\prime)\in\fN \}=
C_{\fT_0}\vrt_{\fN_\img(\fT^*)}
\]
is a Hilbert space unitary operator
from $\cB(\fN_\img(\fT^*),\fN_{-\img}(\fT^*))$.
If we let
$V_z\dfn I+2z(\fT_0-zI)^{-1}$, $z\in\bbC_*$,
a bijective bounded on
$\fH$ operator, then
$C_\fN=V_{-\img}\vrt_{\fN_\img(\fT^*)}$
and the inverse $C^{-1}_\fN=
V_\img\vrt_{\fN_{-\img}(\fT^*)}$;
in particular $C_{\fT_0}=V_{-\img}$,
$C^{-1}_{\fT_0}=V_\img$.
Recall \eg \cite[Lemma~1.1.9]{Behrndt20}
for $V_z$.

For the record, some
other properties of $N\in\cN$ are
\begin{prop}\label{prop:N}
\begin{SL}
\item[$(1)$]
The defect numbers,
$(n,n)$ say, of $N$ are equal
\[
n=\dim\fN_\img(\fN^*)=\dim\fN_{-\img}(\fN^*)
\]
and they satisfy $d+n=\dim\fH$.
\item[$(2)$]
$\dim N=d$, $\dim T=n$.
\item[$(3)$]
$T^+=T\hop \Sigma$. The Krein
subspace $(\Sigma,[\cdot,\cdot])$ of
$(\fK,[\cdot,\cdot])$ is given by
\[
\Sigma\dfn T^+\cap T^\bot=
N\hop\whJ(N)=J\whfM\,,
\]
\[
\whfM\dfn
\whfN_\img(\fT^*)\hop\whfN_{-\img}(\fT^*)\,.
\]
\item[$(4)$]
$\dom N=\dom\fN$ is characterized by
\begin{align*}
\dom N=&
(\fT_0+\img I)^{-1}(\fN_\img(\fT^*))
=
(\fT_0-\img I)^{-1}(\fN_{-\img}(\fT^*))
\\
=&(C_\fN-I)(\fN_\img(\fT^*))
\end{align*}
and is a hyper-maximal neutral
subspace of a Krein space
$\fM\dfn\dom\whfM$
with an indefinite metric
\[
[f_\img+f_{-\img},
g_\img+g_{-\img}]_\fM\dfn
[f_\img,Jg_\img]-[f_{-\img},Jg_{-\img}]\,,
\]
$f_{\pm\img}$, $g_{\pm\img}\in\fN_{\pm\img}(\fT^*)$,
and fundamental symmetry
$\bigl(\begin{smallmatrix}1 & 0 \\ 0 &-1
\end{smallmatrix}\bigr)$ (written with respect to
$\fN_{\img}(\fT^*)+\fN_{-\img}(\fT^*)$).
\end{SL}
\end{prop}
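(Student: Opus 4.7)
The plan is to treat parts $(3)$, $(1)$--$(2)$, and $(4)$ in that order, using throughout the identification $\fT=JT$ between the closed symmetric relation in the Krein space $\fH$ and its Hilbert-space companion, together with the classical von Neumann decomposition $\fT^*=\fT\hop\whfM$ and its associated Cayley transform. For part $(3)$, I would start from the observation that the map sending $(f,f^\prime)$ to $(f,Jf^\prime)$ is a unitary involution of $\fK$ in its Hilbert metric $[\cdot,\whJ\cdot]$ that intertwines $\fT\leftrightarrow T$ and $\fT^*\leftrightarrow T^+$; applying it to the Hilbert von Neumann formula $\fT^*=\fT\hop\whfM$ yields $T^+=T\hop J\whfM$, so that $\Sigma\dfn T^+\cap T^\bot=J\whfM$. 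To obtain $\Sigma=N\hop\whJ(N)$ I would verify the inclusion $N\hop\whJ(N)\subseteq\Sigma$ via three observations: $N\subseteq T^+\cap T^\bot$ by the definition of $\cN$; $\whJ$ is an isometry of both the indefinite and Hilbert metrics on $\fK$, so together with the identity $T^+=\whJ(T)^\bot$ the relation $N\perp T$ translates to $\whJ(N)\subseteq T^+$, and dually $\whJ(N)\perp T$ follows from $N\subseteq T^+$; and the neutrality of $N$, combined with the fact that the Hilbert inner product of $\whf$ and $\whJ\whg$ equals $[\whf,\whg]$, gives both $N\perp\whJ(N)$ in the Hilbert metric and $N\cap\whJ(N)=\{0\}$. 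Equality then follows by a dimension count, using $\dim N=d$ established in the next step.

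For parts $(1)$ and $(2)$, the key input is the Hilbert von Neumann parametrization of self-adjoint extensions: each $\fT_0\in\Self(\fT)$ is of the form $\fT\hop\fN$ where $\fN$ is the graph of a scalar multiple of $U-I$ for a unitary operator $U\co\fN_\img(\fT^*)\lto\fN_{-\img}(\fT^*)$, so that $\dim\fN=d$; transferring via $\fN=JN$ gives $\dim N=d$. The identity $T_0=T\hop N$ combined with the hyper-maximal neutrality of $T_0$ in $\fK$ (forcing $\dim T_0=\dim\fH$) then yields $\dim T=\dim\fH-d$. Setting $n\dfn\dim\fH-d$ proves $(2)$. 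For the defect numbers of $N$, I would invoke the standard Hilbert identity $\fN_z(\fN^*)=\ran(\fN-\bar z I)^\bot$ together with condition $(b)$ of membership in $\cN$: since $\ran(\fN\pm\img I)=\fN_{\pm\img}(\fT^*)$ has dimension $d$, its orthogonal complement in $\fH$ has dimension $\dim\fH-d=n$, the same for both signs. This simultaneously gives equality of defect numbers and the relation $d+n=\dim\fH$.

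For part $(4)$, the equality $\dom N=\dom\fN$ is immediate from $\fN=JN$. The three equivalent descriptions of $\dom N$ are unpacked from \eqref{eq:exN}: writing $h=Jf^\prime$, the condition $(f,f^\prime)\in N$ becomes $(f,h)\in\fT_0$ together with $h+\img f\in\fN_\img(\fT^*)$, so $f\in(\fT_0+\img I)^{-1}(\fN_\img(\fT^*))$; the $-\img$ version is analogous, and the Cayley form follows from $C_\fN-I=-2\img(\fT_0+\img I)^{-1}\vrt_{\fN_\img(\fT^*)}$. For the Krein structure of $\fM$ and the hyper-maximal neutrality of $\dom N$, I would parametrize $f\in\dom N$ as $f=-h+C_\fN h$ with $h\in\fN_\img(\fT^*)$, so $f_\img=-h$ and $f_{-\img}=C_\fN h$, and then compute $[f,f]_\fM=[h,Jh]-[C_\fN h,JC_\fN h]=0$ by Hilbert-unitarity of $C_\fN$; hence $\dom N$ is neutral. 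Viewing it as the graph of $-C_\fN\co\fN_\img(\fT^*)\lto\fN_{-\img}(\fT^*)$ inside the $(d,d)$-signature Krein space $\fM$, with canonical decomposition $\fN_\img(\fT^*)\dsum\fN_{-\img}(\fT^*)$ and fundamental symmetry $\mathrm{diag}(1,-1)$, identifies $\dom N$ as a hyper-maximal neutral subspace, since graphs of Hilbert-space unitaries between the positive and negative components of a canonical decomposition are precisely the hyper-maximal neutral subspaces. The principal obstacle is the bookkeeping across the three frameworks --- the Krein space $(\fH,[\cdot,\cdot])$, its Hilbert companion $(\fH,[\cdot,J\cdot])$, and the pair space $\fK$ with fundamental symmetry $\whJ$ --- and the careful verification that the several natural involutions ($J$ applied to second components, $\whJ$, and the Krein adjoint operation) act isometrically with respect to the appropriate metric; once those identifications are cleanly in place, each individual assertion reduces to a classical von Neumann or Cayley-transform calculation.
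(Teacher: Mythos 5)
Your overall route --- the von Neumann formula $\fT^*=\fT\hop\whfM$ transported by $(f,f^\prime)\mapsto(f,Jf^\prime)$, condition (b) of membership in $\cN$ for the defect numbers of $\fN$, and the Hilbert unitarity of the Cayley transform $C_\fN$ for $\dim N=d$ and for part (4) --- is the same as the paper's (the published proof is a four-line sketch of exactly these ingredients). However, two of your steps rest on cardinal arithmetic that breaks down when $d=\infty$, which the proposition explicitly allows ($d\leq\infty$). First, you obtain $\Sigma=N\hop\whJ(N)$ from the inclusion $N\hop\whJ(N)\subseteq\Sigma$ ``by a dimension count'': when $d$ is infinite, a closed subspace of $J\whfM$ of the same Hilbert dimension need not be all of $J\whfM$. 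The correct argument uses the hyper-maximal neutrality of $T\hop N$ directly: $\fK=(T\hop N)\hop\whJ(T\hop N)=T\hop N\hop\whJ(N)\hop\whJ(T)$, and since $T^+=\whJ(T)^\bot$ this gives $T^+=T\hop N\hop\whJ(N)$ and hence $\Sigma=T^+\cap T^\bot=N\hop\whJ(N)$ with no counting. Second, you derive $\dim T=n$ by subtracting, $\dim T=\dim T_0-\dim N=\dim\fH-d$, and then ``setting'' $n\dfn\dim\fH-d$; but $n$ is already defined as the defect number of $N$, and for infinite $d$ the identity $d+n=\dim\fH$ does not determine $n$, so $\dim T+d=\dim\fH=n+d$ does not yield $\dim T=n$. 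The paper's fix is the symmetry of the roles of $T$ and $N$: since $T\subseteq N^+\cap N^\bot$ and $N\hop T$ is hyper-maximal neutral, $T$ belongs to $\cN$ relative to $N$, so the same Cayley-transform argument ($C_\fT$ is a Hilbert unitary from $\fN_\img(\fN^*)$ onto $\fN_{-\img}(\fN^*)$) gives $\dim T=n$ directly.

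The remaining parts of your proposal are sound and match the paper: the verification of $N\hop\whJ(N)\subseteq\Sigma$ via the intertwining properties of $\whJ$, the identification $n=\dim\bigl(\fN_{\pm\img}(\fT^*)^\bot\bigr)$ from $\fN_{\pm\img}(\fN^*)=\ran(\fN\mp\img I)^\bot$ and condition (b) (which also gives $d+n=\dim\fH$ as cardinal addition), and the unpacking of \eqref{eq:exN} together with the graph-of-a-unitary description of $\dom N$ inside $\fM$ for part (4) are all correct.
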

\begin{proof}
(1) uses (b) above. (2) $\dim N=d$ uses,
for example, that
$C_\fN\in\cB(\fN_\img(\fT^*),\fN_{-\img}(\fT^*))$
is Hilbert unitary;
by changing the roles of $T$ and $N$,
similarly $\dim T=n$. (3) That
$(\Sigma,[\cdot,\cdot])$
is a Krein space is because
$0\in\rho(\whJ\vrt_\Sigma)$,
$\whJ(\Sigma)=\Sigma$,
see \cite[Theorem~7.16]{Azizov89}.
$\Sigma=J\whfM$ follows
from the von Neumann
formula $\fT^*=\fT\hop\whfM$.
(4) is a straightforward computation.
\end{proof}
\begin{rems}
\begin{SL}
\item[1.]
If $T$ is densely defined,
$T_0\in\Self(T)$
is in 1-1 correspondence with $\dom N$.
If moreover $T$ commutes with $J$,
\ie $T=\fT J$,
$\dom N$ can be regarded as
a hyper-maximal neutral subspace
of a Krein space $\fM$
with an indefinite metric
$[\cdot,J\cdot]_\fM$
and fundamental symmetry
$\bigl(\begin{smallmatrix}J & 0 \\ 0 &-J
\end{smallmatrix}\bigr)$; this is precisely
the case studied in \cite{Hassi13a,Kuzhel11,Albeverio09}.
\item[2.]
If $\hrho(T)\neq\emptyset$ then
$\dim\fN_z(T^+)$
is constant for $z$ from each
connected component of $\hrho(T)$;
\eg \cite{Jonas95}.
If $T_0=T_0(T,N)$ has
nonempty $\rho(T_0)$ ($\subseteq\hrho(T)$),
$(\forall z\in\rho(T_0))$
$\dim\fN_z(T^+)=d$. This is because
\[
\fK=N^+\hop\whJ(N)
\]
on the one hand, and by the von Neumann formula
\[
\fK=N^+\hsum\whfN_z(T^+)
\]
(direct componentwise sum)
on the other hand.
\item[3.]
If $\fH$ is a Hilbert space,
Theorem~\ref{thm:1-1} and
Proposition~\ref{prop:N}(3)
already appear in
\cite[Theorem~12]{Coddington73},
see also \cite[Theorem~A]{Coddington74},
\cite[Theorem~A]{Coddington73a}, in the
following form (with our adapted
notation): Let $\Sigma$ be an orthogonal
complement in $\fT^*$ of $\fT$.
Then $\fT_0\in\Self(\fT)$ iff
$\fT_0=\fT\hop\fN$, where $\fN$
is a subspace of $\Sigma$ satisfying
$\Sigma=\fN\hop\bigl(\begin{smallmatrix}0 &
I \\ -I & 0 \end{smallmatrix}\bigr)(\fN)$.
We are in the referee's debt for
pointing this out.
\end{SL}
\end{rems}
%%%%%%%%%%%%%%%%%%%%%%%%%%%%%%%%%%%%%%%%%%%%%%%%%%%%%%%%%%%%%%
%%%%%%%%%%%%%%%%%%%%%%%%%%%%%%%%%%%%%%%%%%%%%%%%%%%%%%%%%%%%%%
%%%%%%%%%%%%%%%%%%%%%%%%%%%%%%%%%%%%%%%%%%%%%%%%%%%%%%%%%%%%%%
%%%%%%%%%%%%%%%%%%%%%%%%%%%%%%%%%%%%%%%%%%%%%%%%%%%%%%%%%%%%%%
\section{Standard symmetric operator in a Pontryagin space}
\label{sec:semi}
%%%%%%%%%%%%%%%%%%%%%%%%%%%%%%%%%%%%%%%%%%%%%%%%%%%%%%%%%%%%%%
%%%%%%%%%%%%%%%%%%%%%%%%%%%%%%%%%%%%%%%%%%%%%%%%%%%%%%%%%%%%%%
%%%%%%%%%%%%%%%%%%%%%%%%%%%%%%%%%%%%%%%%%%%%%%%%%%%%%%%%%%%%%%
%%%%%%%%%%%%%%%%%%%%%%%%%%%%%%%%%%%%%%%%%%%%%%%%%%%%%%%%%%%%%%
A standard symmetric relation $T$ in a Pontryagin space,
as defined in \cite{Azizov03}, is a closed
symmetric relation with the property that
$(\exists z\in\bbC_*)$
$z$, $\ol{z}\notin\sigma^0_p(T)$.
Equivalently
\begin{defn}\label{defn:standard}
A closed symmetric relation $T$
is said to be a \textit{standard symmetric relation}
if the set
\[
\delta(T)\dfn(\bbC_*\cap\hrho(T))\cap
(\bbC_*\cap\hrho(T))^*
\]
is nonempty.
(As usual $\cO^*\dfn\{z\vrt\ol{z}\in\cO\}$
for a subset $\cO\subseteq\bbC$.)
\end{defn}
If $T$ is standard in a Pontryagin space,
$\delta(T)$ is $\bbC_*$ with the
exception of at most a finite-dimensional
subset.

In this section $(\fH,[\cdot,\cdot])$ is a Pontryagin
space with fundamental symmetry $J$, $T$
is a standard symmetric relation in $\fH$
with equal defect numbers.
Our aim is to prove that
\begin{thm}\label{thm:ex}
If $T$ is densely defined, then
the resolvent set of its canonical self-adjoint
extension contains as a subset
$\delta(T)$.
\end{thm}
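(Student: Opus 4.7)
Fix $T_0\in\Self(T)$ and $z\in\delta(T)$. By Theorem~\ref{thm:1-1} write $T_0=T\hop N$ with a unique $N\in\cN$. Denseness of $\dom T$ makes $T^+$, and hence $N\subseteq T^+$, an operator; consequently $\dom T_0=\dom T\dsum\dom N$ is a direct sum.

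First I reduce the claim $z\in\rho(T_0)$ to showing injectivity of $T_0-zI$. With $\fT_0\dfn JT_0$ Hilbert self-adjoint and $\bbC_*\subseteq\rho(\fT_0)$, one computes
\[
J(T_0-zI)=\fT_0-zJ=(\fT_0-zI)+2zP_-\,,
\]
where $P_-$ is the Hilbert orthogonal projection onto the negative part in a canonical decomposition of the Pontryagin space $\fH$, a finite-rank operator (rank equal to the negative index $\kappa$). Since $\fT_0-zI$ is Hilbert bijective for $z\in\bbC_*$, the right-hand side is a finite-rank perturbation of a bijection, and a Fredholm-alternative argument yields that $T_0-zI$ is bijective iff injective, equivalently iff $z\notin\sigma^0_p(T_0)$.

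It remains to show $z\notin\sigma^0_p(T_0)$. Let $\phi\in\dom T_0$ satisfy $T_0\phi=z\phi$; decomposing $\phi=g+h$ with $g\in\dom T$, $h\in\dom N$ and using $T_0\vrt_{\dom T}=T$, $T_0\vrt_{\dom N}=N\subseteq T^+$ gives
\[
(T-zI)g+(T^+-zI)h=0\,.
\]
If $h=0$, then $(T-zI)g=0$ and $z\in\hrho(T)$ forces $g=0$, hence $\phi=0$. For $h\neq0$, pairing the identity with test vectors $k\in\fN_{\ol z}(T^+)=\ran(T-\ol z I)^{[\bot]}$ (available by $\ol z\in\hrho(T)$) annihilates the $(T-zI)g$ term and reduces the equation to the boundary-type constraint $[T^+h,k]=\ol{z}[h,k]$ for all such $k$. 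Translating this via the Cayley parameterization $\dom N=(C_\fN-I)(\fN_\img(\fT^*))$ of Proposition~\ref{prop:N}(4) and the defining range relations $\ran(\fN+wI)=\fN_w(\fT^*)$ for $w=\pm\img$ (property (b) of $\cN$), and combining with Hilbert unitarity of $C_\fN\co\fN_\img(\fT^*)\lto\fN_{-\img}(\fT^*)$, forces the Cayley parameter $u$ of $h$ to vanish, so $h=0$, completing the contradiction.

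\textbf{Main obstacle.} The delicate part is the $h\neq0$ branch: three structural ingredients (the Hilbert-graph orthogonality $\fT\perp\fN$ inherent in $T_0=T\hop N$, the Krein-side range condition of property (b), and the Pontryagin-specific non-degenerate pairing between the finite-dimensional defect spaces $\fN_{\pm\img}(\fT^*)$) must be woven together to preclude non-trivial $h$. Dense definition of $T$ is used both to ensure that the decomposition $\phi=g+h$ is unambiguous (so $N$ is single-valued) and to legitimate the adjointness identity $[Tg,k]=[g,T^+k]$ for $k\in\dom T^+$ in the test-vector calculations.
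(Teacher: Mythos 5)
Your overall architecture is the same as the paper's: reduce $z\in\rho(T_0)$ to $z\notin\sigma_p(T_0)$ (your Fredholm/finite-rank-perturbation reduction is correct in a Pontryagin space and is a legitimate substitute for the paper's observation that $\ran(T_0-zI)$ is closed on $\bbC_*$), then use $T_0=T\hop N$ and argue that neither summand can produce an eigenvalue at $z\in\delta(T)$. But the two steps that carry essentially all of the mathematical weight are missing, and the paper devotes the whole of Section~\ref{sec:semi} to them. First, to conclude from $(T-zI)g=-(N-zI)h$ that both sides vanish you need $\ran(T-zI)\cap\ran(N-zI)=\{0\}$ for $z\in\delta(T)$; your test-vector step does not deliver this, because pairing with $k\in\fN_{\ol z}(T^+)$ only tells you that $(N-zI)h$ lies in $\fN_{\ol z}(T^+)^{[\bot]}=\ran(T-zI)$, which is already immediate from the equation itself --- the pairing is vacuous. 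The paper proves the triviality of this intersection (Lemma~\ref{lem:Os}, second half) by a genuine computation based on the identity $\dom\fN=\ker(\fT^*\fT_0+I)$, which forces any common range vector to come from $(\abs{\fT}^2+I)^{-1}(\fN_z(T_0))$ and ultimately from $\fN_z(N)$.

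Second, even granting $(N-zI)h=0$, you must show $h=0$, i.e.\ that $N$ has no (non-real) eigenvalues. This is Lemma~\ref{lem:exN}, and it is the hard core of the theorem: the paper computes $\fN_z(N)$ via \eqref{eq:n0}--\eqref{eq:n0b}, introduces the operators $L_\mu$, reduces to $\fN_\img\cap\ker(\fT^*+L_\mu\fT^*L_\mu^{-1})$, and then exploits the \emph{finite} negative index in an essential way: the candidate eigenvector's negative component must lie in $\fN_{(\frac{\mu^2-1}{2\mu})^2}(\abs{T_{21}}^2)\cap\fN_{\frac{\img(\mu^2+1)}{2\mu}}(T_{11})$, and positivity of $\abs{T_{21}}^2$ clashes with self-adjointness of $T_{11}$ on the finite-dimensional negative part unless the vector is zero. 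Your proposal replaces all of this with the assertion that the Cayley parameterization of $\dom N$ together with Hilbert unitarity of $C_\fN$ ``forces the Cayley parameter of $h$ to vanish''; no such one-line argument is available (indeed $C_\fN$ being Hilbert unitary holds for every $N\in\cN$ regardless of whether $T$ is densely defined, whereas the conclusion $\sigma_p(N)=\emptyset$ genuinely uses denseness and the Pontryagin structure). As written, the proof has a gap exactly at the step you yourself flag as the ``main obstacle''.
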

Theorem~\ref{thm:ex} is a combination of
the two lemmas below.
\begin{lem}\label{lem:Os}
Let $T_0=T_0(T,N)$, $N\in\cN$.
Let $O_s(T,N)$ be the set of those
$z\in\delta(T)$
such that $\ran(T-wI)\cap\ran(N-wI)$ is trivial
for both $w=z$ and $w=\ol{z}$. Then
\[
\bbC_*\cap\rho(T_0)=
O_s(T,N)\cap\delta(N)\,.
\]
If moreover $T$ is densely defined then
\[
\bbC_*\cap\rho(T_0)=
\delta(T)\cap\delta(N)\,.
\]
\end{lem}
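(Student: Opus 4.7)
The plan is to convert $z\in\bbC_*\cap\rho(T_0)$ into a purely algebraic condition on the sum $T_0=T\hop N$ from Theorem~\ref{thm:1-1}, and then close with the discreteness of the non-real spectrum of a self-adjoint relation in a Pontryagin space. Since $T\hop N$ is Hilbert-orthogonal in $\fK$, one has $T\cap N=\{0\}$, so every $(h,h')\in T_0$ decomposes uniquely as $(f,f')+(g,g')$ with $(f,f')\in T$ and $(g,g')\in N$. Reading off this decomposition gives
\[
\ran(T_0-zI)=\ran(T-zI)+\ran(N-zI),
\]
and, for $z\in\hrho(T)\cap\hrho(N)$, the key equivalence
\[
\ker(T_0-zI)=\{0\}\Longleftrightarrow\ran(T-zI)\cap\ran(N-zI)=\{0\}\,,
\]
obtained by tracking $v:=f'-zf=-(g'-zg)$. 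A nonzero $v$ in the intersection produces $h_0=f-g\in\ker(T_0-zI)$, nonzero because $f=g$ would imply $(f,f')=(g,g')\in T\cap N=\{0\}$ and hence $v=0$; conversely a nonzero kernel element forces $v$ into the intersection, which must then be $\{0\}$ by the regularity at $z$.

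Combining the above with the Pontryagin-space fact that $\sigma(T_0)\cap\bbC_*$ is a finite, conjugation-symmetric set of eigenvalues yields $z\in\bbC_*\cap\rho(T_0)$ iff $\ker(T_0-zI)=\{0\}$. The inclusion $\bbC_*\cap\rho(T_0)\subseteq O_s(T,N)\cap\delta(N)$ then follows: for $z\in\rho(T_0)$ the bounded $(T_0-zI)^{-1}$ restricts to $\ran(T-zI)$ (resp.\ $\ran(N-zI)$) as a bounded closed inverse of $T-zI$ (resp.\ $N-zI$), so these ranges are closed, and together with the kernel triviality plus the conjugation symmetry of $\rho(T_0)$ (coming from $T_0=T_0^+$) one gets $z,\ol z\in\hrho(T)\cap\hrho(N)$. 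The range-intersection condition at $w=z,\ol z$ is then the displayed equivalence. The reverse inclusion is immediate from the equivalence and the Pontryagin dichotomy.

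For the ``moreover'' statement the task reduces to showing $\delta(T)\cap\delta(N)\subseteq O_s(T,N)$, i.e., that $T_0$ has no non-real eigenvalue in $\delta(T)\cap\delta(N)$. Suppose $h_0\neq 0$ with $T_0h_0=zh_0$ and $z\in\delta(T)\cap\delta(N)$, and decompose. The case $v=0$ is immediate: it forces $f\in\ker(T-zI)=\{0\}$ and $g\in\ker(N-zI)=\{0\}$, contradicting $h_0\neq 0$. The hard case is $v\neq 0$; here the plan is to exploit that density of $T$ makes $T^+$ a closed operator ($\mul T^+=J(\dom T)^\bot=\{0\}$), and that $N\subseteq T^+\cap T^\bot$ then translates, via the Hilbert pairing on $\fK$ and density of $\dom T$, into the rigidity $g'\in\dom T^*$ and $T^*g'=-g$ for every $(g,g')\in N$. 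Combining this structural identity with $z\in\hrho(T)\cap\hrho(N)$ to trace any $v\neq 0$ back to a nonzero element of $\ker(T-zI)$ or $\ker(N-zI)$ is the main obstacle, and this is where the density hypothesis does its essential work.
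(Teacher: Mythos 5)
Your first display is handled essentially along the paper's lines: the equivalence between $\ker(T_0-zI)\neq\{0\}$ and $\ran(T-zI)\cap\ran(N-zI)\neq\{0\}$ (for $z\notin\sigma_p(T)\cup\sigma_p(N)$), obtained by decomposing along $T_0=T\hop N$, is exactly the content of the paper's Lemma~\ref{lem:O}, and your bounded-restriction argument for closedness of $\ran(T-zI)$, $\ran(N-zI)$ when $z\in\rho(T_0)$ is fine. One claim you lean on is, however, false in the stated generality: for a self-adjoint \emph{relation} $T_0$ in a Pontryagin space, $\sigma(T_0)\cap\bbC_*$ need not be a finite set of eigenvalues (the paper stresses in the introduction that $\rho(T_0)$ may even be empty; e.g.\ $T_0=\cL\times\cL$ with $\cL$ neutral has $\sigma_p(T_0)=\bbC$), and the biconditional you extract, ``$z\in\bbC_*\cap\rho(T_0)$ iff $\ker(T_0-zI)=\{0\}$'', fails without the companion condition $\ol{z}\notin\sigma_p(T_0)$, since $\ran(T_0-zI)^{[\bot]}=\fN_{\ol{z}}(T_0)$. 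What is true, and all the paper uses, is that $\ran(T_0-zI)$ is closed for $z\in\bbC_*$, whence $z\in\rho(T_0)$ iff $z,\ol{z}\notin\sigma^0_p(T_0)$. Because $O_s(T,N)$ and $\delta(\cdot)$ are conjugation-symmetric by construction and you do track both $w=z$ and $w=\ol z$, this is a repairable misstatement rather than a fatal one.

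The ``moreover'' statement is where your proposal has a genuine gap, and you say so yourself: after correctly reducing the claim to $\delta(T)\cap\delta(N)\subseteq O_s(T,N)$ and disposing of the case $v=0$, you declare the case $v\neq0$ ``the main obstacle'' and stop. That case is the entire content of the second assertion, and identifying the rigidity $\fT^*(Jg')=-g$ for $(g,g')\in N$ is only the starting point. The paper's mechanism is: if $(N-zI)f=(T-zI)g$ then $h_z\dfn g-f\in\fN_z(T_0)$, and since $f\in\dom\fN=\ker(\fT^*\fT_0+I)$ (this is exactly your rigidity, restated), one computes $0=(\fT^*\fT_0+I)f=(\abs{\fT}^2+I)g-(z^2+1)h_z$, i.e.\ $g=(z^2+1)(\abs{\fT}^2+I)^{-1}h_z$. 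This yields $O(T,N)\supseteq\{\pm\img\}$ and
\[
\ran(T-zI)\cap\ran(N-zI)\subseteq(T-zI)(\abs{\fT}^2+I)^{-1}\bigl(\fN_z(T_0)\bigr)\,,\qquad z\neq\pm\img\,,
\]
which, combined with $\fN_z(T_0)=\fN_z(T)+\fN_z(N)$ from Lemma~\ref{lem:O}, forces the intersection to vanish for $z\in\delta(T)\setm\sigma^0_p(N)$; conjugation symmetry then gives $\delta(T)\cap\delta(N)\subseteq O_s(T,N)$. Without this (or an equivalent) quantitative use of density, your argument does not close: mere knowledge that $T^+$ is an operator and that $z\in\hrho(T)\cap\hrho(N)$ does not by itself exclude a nonzero $v\in\ran(T-zI)\cap\ran(N-zI)$, i.e.\ a nonreal eigenvalue of $T_0$ inside $\delta(T)\cap\delta(N)$.
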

\begin{proof}
Since
$\ran(T_0-zI)$ is closed for all $z\in\bbC_*$,
we have
\[
\delta(T_0)=\bbC_*\cap\rho(T_0)=
(\bbC_*\setm\sigma^0_p(T_0))\cap
(\bbC_*\setm\sigma^0_p(T_0))^*\,.
\]
Let $O(T,N)$ be the set of those
$z\in\bbC$ such that $\ran(T-zI)\cap\ran(N-zI)=\{0\}$.
By Lemma~\ref{lem:O}
\[
O(T,N)\cap\sigma_p(T_0)=O(T,N)\cap(\sigma_p(T)\cup
\sigma_p(N))
\]
and
\[
\rho(T_0)\subseteq O(T,N)\cap O(T,N)^*
\]
so the claim follows with
$O_s(T,N)=\delta(T)\cap O(T,N)\cap O(T,N)^*$.

Suppose $T$ is densely defined.
Since $\fN$ is in 1-1 correspondence with a
canonical self-adjoint extension
$\fT_0\dfn JT_0\in\Self(\fT)$,
which is an operator, and since
\[
\dom \fN=\ker(\fT^*\fT_0+I)
\]
the condition
$(N-zI)f=(T-zI)g$ for some
$f\in\dom \fN$ and $g\in\dom \fT$ implies that
$(T_0-zI)(f-g)=0$, \ie
$f=g-h_z$ for some $h_z\in\fN_z(T_0)$; hence
\[
0=(\fT^*\fT_0+I)f=
(\abs{\fT}^2+I)g-(z^2+1)h_z
\]
\ie
\[
g=(z^2+1)(\abs{\fT}^2+I)^{-1}h_z.
\]
This shows in particular
$O(T,N)\supseteq\{-\img,\img\}$ and
\[
\ran(T-zI)\cap\ran(N-zI)\subseteq
(T-zI)(\abs{\fT}^2+I)^{-1}(\fN_z(T_0))
\]
if $z\in\bbC\setm\{-\img,\img\}$.
Since $\fN_z(T_0)=\fN_z(T)+\fN_z(N)$
for $z\in O(T,N)$, see Lemma~\ref{lem:O},
it therefore follows that
(with $\amalg$ the disjoint $\cup$)
\begin{align*}
O(T,N)\supseteq&\{-\img,\img\}
\\
&\amalg
\{z\in\bbC\setm\{-\img,\img\}\vrt
\\
&(\abs{\fT}^2+I)^{-1}(\fN_z(T)+\fN_z(N))
\subseteq\fN_z(T) \}
\end{align*}
and subsequently
\[
\delta(T)\cap O(T,N)\setm
\sigma^0_p(N)\supseteq
\delta(T)\setm\sigma^0_p(N)\,.\qedhere
\]
\end{proof}
\begin{lem}\label{lem:exN}
If $T$ is densely defined, then
$N\in\cN$ is a densely defined
standard symmetric operator without eigenvalues.
\end{lem}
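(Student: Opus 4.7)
The three assertions---that $N$ is an operator, is densely defined, and is a standard symmetric operator without eigenvalues---are verified in turn. That $N$ is an operator is immediate: since $T$ is densely defined, its Hilbert-space adjoint $T^*$ is an operator, hence $T^+=JT^*J$ is an operator in the Krein space, and therefore so is $N\subseteq T^+\cap T^\bot$; correspondingly $\fN=JN$ is a closed symmetric operator in the Hilbert space $(\fH,[\cdot,J\cdot])$, and $T_0$ and $\fT_0=JT_0$ are operators as well.

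For the absence of eigenvalues of $N$, I would first settle the auxiliary claim $\sigma_p(\fN)=\emptyset$. If $(h,\mu h)\in\fN$ with $h\neq 0$, Hilbert-space symmetry of $\fN$ rules out $\mu\in\bbC_*$; for $\mu\in\bbR$, condition (b) of $N\in\cN$ places both $(\mu+\img)h\in\fN_\img(\fT^*)$ and $(\mu-\img)h\in\fN_{-\img}(\fT^*)$, so $h\in\fN_\img(\fT^*)\cap\fN_{-\img}(\fT^*)=\{0\}$. Passing to $N$ is more subtle, because $(h,\lambda h)\in N$ only delivers the twisted identity $\fN h=\lambda Jh$. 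For $\lambda=0$ the condition $N\subseteq T^\bot$ together with density of $\dom T$ forces $h\perp\dom T$, hence $h=0$. For $\lambda\neq 0$, combining $N\subseteq T^+$ (giving $T^+h=\lambda h$, equivalently $\fT^*h=\lambda Jh$) with $N\subseteq T^\bot$ and density of $\dom T$ (giving $T^*h=-h/\lambda$, equivalently $\fT^*(Jh)=-h/\lambda$, so $Jh\in\fN_{-1/\lambda}(T^+)$) yields a system of compatibility identities. Iterating produces $(\fT^*)^2h=-h$, and I would close the argument by feeding this back into condition (b), the Cayley formula $\dom\fN=(C_\fN-I)\fN_\img(\fT^*)$ from Proposition~\ref{prop:N}(4), and the neutrality $[(h,\lambda h),(h,\lambda h)]=2\,\mrm{Im}(\lambda)[h,h]=0$ to derive $h=0$.

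For the denseness of $\dom N=\dom\fN$ I would rely on the Cayley characterisation of Proposition~\ref{prop:N}(4), which identifies $\dom\fN$ as a hyper-maximal neutral subspace of the Krein space $\fM=\fN_\img(\fT^*)+\fN_{-\img}(\fT^*)$. The density in $\fH$ reduces to showing that $\fM$ itself is dense in $\fH$, i.e.\ $\ran(\fT+\img I)\cap\ran(\fT-\img I)=\{0\}$, which I would extract from the densely defined standardness of $T$ together with Pontryagin-space structural input; an alternative, parallel to the computation in the proof of Lemma~\ref{lem:Os}, is to use $\dom\fN=\ker(\fT^*\fT_0+I)$ and the positivity of $\abs{\fT}^2=\fT^*\fT$ to show $(\dom\fN)^\perp=\{0\}$. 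Once $N$ is an operator that is densely defined and has no eigenvalues in the Pontryagin space, the standardness ($\delta(N)\neq\emptyset$) is automatic: general Pontryagin theory yields $\hrho(N)\supseteq\bbC_*$ outside an at-most finite exceptional set.

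The principal obstacle I expect is the passage from $\sigma_p(\fN)=\emptyset$ to $\sigma_p(N)=\emptyset$: the $J$-twisting in $\fN h=\lambda Jh$ prevents any mechanical transfer of the Hilbert-space argument, and one has to exploit the full interplay of condition (b) for $N\in\cN$, the Cayley transform $C_\fN=C_{\fT_0}\vrt_{\fN_\img(\fT^*)}$, the neutrality of $N$ in $\fK$, and the identities extracted from $N\subseteq T^+\cap T^\bot$ in a careful way to close the loop; the denseness of $\fM$ in $\fH$, required for the density of $\dom N$, presents a secondary technical difficulty for which the standardness hypothesis on $T$ is likely indispensable.
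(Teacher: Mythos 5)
Your preparatory reductions are correct: $N$ is an operator because $T^+$ is, the case $\lambda=0$ is handled by $N\subseteq T^\bot$ and density of $\dom T$, and for $\lambda\neq 0$ a putative eigenpair $(h,\lambda h)\in N$ does satisfy $\fT^*h=\lambda Jh$, $\fT^*(Jh)=-\lambda^{-1}h$, hence $(\fT^*)^2h=-h$, together with $\mrm{Im}(\lambda)[h,h]=0$. But the proof stops exactly where the lemma becomes nontrivial: the step you defer (``feeding this back into condition (b), the Cayley formula and the neutrality to derive $h=0$'') is the entire content of the statement, and the ingredients you list are all Krein-space-generic, whereas the conclusion genuinely uses the Pontryagin structure. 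The paper's proof closes the loop by a quite different mechanism: it rewrites the two eigenvector conditions as $f_\img\in\fN_\img(\fT^*)\cap\ker(\fT^*+L_\mu\fT^*L_\mu^{-1})$ with $L_\mu=\lambda^{-1}P_{\fH_+}+\lambda P_{\fH_-}$ and $\mu=\img/z$, then chooses a fundamental decomposition $\fH=\fH_-[\op]\fH_+$ with $\fH_-$ finite-dimensional and $\dom T\supseteq\fH_-$ (this is where the Pontryagin hypothesis enters), writes $T$ as a $2\times2$ operator matrix, and reduces everything to the requirement that $f_-$ be a common eigenvector of $\abs{T_{21}}^2$ (eigenvalue $(\tfrac{\mu^2-1}{2\mu})^2$, which must be positive, forcing $\mu$ real) and of the self-adjoint $T_{11}$ (eigenvalue $\img\tfrac{\mu^2+1}{2\mu}$, purely imaginary for real $\mu$) --- a contradiction unless $f_-=0$, whence $f_+=0$. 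None of condition (b), the Cayley transform $C_\fN$, or the neutrality of $N$ substitutes for this computation, so your argument for $\sigma_p(N)=\emptyset$ is incomplete at its central point.

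The density claim is also not salvageable along the route you sketch. You reduce density of $\dom N$ to density of $\fM=\fN_\img(\fT^*)+\fN_{-\img}(\fT^*)$ in $\fH$; but $\dom N$ is only a hyper-maximal \emph{neutral} subspace of the Krein space $\fM$ (a proper ``half'' of it, of dimension $d$ by Proposition~3.4), so density of $\fM$ would not transfer to $\dom N$, and $\fM$ itself is not dense in general: $\fM^\bot=\ran(\fT+\img I)\cap\ran(\fT-\img I)$ is an intersection of two closed subspaces each of codimension $d$, which is trivial only when $\dim\fH\leq 2d$. The variant via $\dom\fN=\ker(\fT^*\fT_0+I)$ runs into the same obstruction. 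Note that the paper's own proof of this lemma is devoted entirely to the absence of eigenvalues (from which standardness is immediate, since standardness only requires a conjugate pair of non-eigenvalues in $\bbC_*$) and does not argue the density assertion at all, so you should not expect to recover it from the tools developed there.
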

\begin{proof}
\textit{Step 1.}
In an arbitrary canonical decomposition
$\fH=\fH_-[\op]\fH_+$ with fundamental
symmetry $J$,
the eigenspace of
an operator $N$ is given by
\begin{equation}
\fN_z(N)=(\dom N)\cap
(I+\img zJ)(\fN_\img)\cap
(I-\img zJ)(\fN_{-\img})
\label{eq:n0}
\end{equation}
if $z\in\bbC\setm\{-\img,\img\}$, and by
\begin{equation}
\begin{split}
\fN_\img(N)=&
(\dom N)\cap((\fH_+\cap\fN_\img)\op
(\fH_-\cap\fN_{-\img}) )\,,
\\
\fN_{-\img}(N)=&
(\dom N)\cap((\fH_+\cap\fN_{-\img})\op
(\fH_-\cap\fN_{\img}) )\,.
\label{eq:n0b}
\end{split}
\end{equation}
Here we use $\fN_{\pm\img}\dfn\fN_{\pm\img}(\fT^*)$.

Let $z\in\bbC$ and
$f\in\fN_z(N)$; then
$(I\mp\img zJ)f\in\fN_{\pm\img}$ by \eqref{eq:exN}.
If $z\neq\pm\img$, the inverse
operators
\[
(I\mp\img zJ)^{-1}\subseteq(I\pm\img zJ)/(1+z^2)
\]
and this shows $\subseteq$ in \eqref{eq:n0}.
Conversely,
let $f\in\dom N$ ($=\dom\fN$)
such that
$(I\mp\img zJ)f\in\fN_{\pm\img}$, \ie
\[
(I\mp\img zJ)f=g_{\pm\img}\in
\fN_{\pm\img}\,.
\]
Since $f\in\dom \fN$,
$(\exists f^\prime)$ $(f,f^\prime)\in\fN$; hence
by \eqref{eq:exN}
\[
f^\prime\pm\img f=f_{\pm\img}\in
\fN_{\pm\img}
\]
and then
\[
f^\prime-zJf=f_\img-\img g_\img\in\fN_\img\,,
\quad
f^\prime-zJf=f_{-\img}+
\img g_{-\img}\in\fN_{-\img}\,.
\]
Since $\fT$ is densely defined,
$\fN_\img$ and
$\fN_{-\img}$ are disjoint by
Lemma~\ref{lem:sfN}, and this shows
$f^\prime=zJf$, \ie $f\in\fN_z(N)$.

By exactly the same arguments one
finds $\fN_{\pm\img}(N)$ as stated.

\textit{Step 2.}
Let $\mu\in\bbC\setm\{-1,1\}$; then
\[
(J-\mu I)(\fN_\img)\cap
(J+\mu I)(\fN_{-\img})=
(J-\mu I)(\fN_\img\cap L_\mu(\fN_{-\img}))\,,
\]
\[
L_\mu\dfn \lambda^{-1}P_{\fH_+}+
\lambda P_{\fH_-}\,,
\quad
\lambda\dfn\frac{1-\mu}{1+\mu}
\]
with the canonical projections
$P_{\fH_\pm}\dfn(I\pm J)/2$.
To see this, consider
$(J-\mu I)f_\img=(J+\mu I)f_{-\img}$,
some $f_{\pm\img}\in\fN_{\pm\img}$. Thus
\[
f_{-\img}=
(\lambda P_{\fH_+}+\lambda^{-1}P_{\fH_-})
f_\img=L^{-1}_\mu f_\img
\quad\text{\ie}\quad
f_\img\in\fN_\img\cap L_\mu(\fN_{-\img})\,.
\]
Since the argument is reversible, this proves
the claim as stated.

\textit{Step 3.}
For $\mu$ as above
\[
\fN_\img\cap L_\mu(\fN_{-\img})\subseteq
\fN_\img\cap\ker(\fT^*+L_\mu\fT^*L^{-1}_\mu)\,.
\]
For, if $f_\img\in\fN_\img$ and
$f_\img=L_\mu f_{-\img}$, some $f_{-\img}\in\fN_{-\img}$,
then $f_{-\img}=L^{-1}_\mu f_\img$ and
\[
\fT^*f_{-\img}=-\img f_{-\img}=-
L^{-1}_\mu\fT^*f_\img
\]
\ie $\fT^*L^{-1}_\mu f_{\img}=
-L^{-1}_\mu\fT^*f_\img$.

\textit{Step 4.}
Without loss of generality,
suppose a Pontryagin space
$\fH=\fH_-[\op]\fH_+$ has a finite negative index
and $\dom T\supseteq\fH_-$.
Then $T$ has the matrix representation
(recall \cite{Krein71})
\[
T=\begin{pmatrix}
T_{11} & -T^*_{21} \\ T_{21} & T_{22}
\end{pmatrix}\co
\begin{matrix}
\fH_- \\ \op \\ \fH_+
\end{matrix}\lto
\begin{matrix}
\fH_- \\ \op \\ \fH_+
\end{matrix}\,.
\]
Here
\begin{SL}
\item[$\circ$]
$T_{11}\in\cB(\fH_-)$ is self-adjoint
in a finite-dimensional Hilbert space
$(\fH_-,-[\cdot,\cdot])$,
\item[$\circ$]
$T_{22}$ is a closed densely defined
symmetric operator in a Hilbert space
$(\fH_+,[\cdot,\cdot])$,
\item[$\circ$]
$T^*_{21}$ is the adjoint of
an operator $T_{21}\co(\fH_-,-[\cdot,\cdot])
\lto(\fH_+,[\cdot,\cdot])$ with
domain $\fH_-$.
\end{SL}
A similar matrix form
holds for $T^+$, but with $T_{22}$ replaced
by its adjoint $T^*_{22}\supseteq T_{22}$.

Subsequently (with $I_\pm\dfn I\vrt_{\fH_\pm}$)
\begin{equation}
\fN_\img=\left\{\begin{pmatrix}f_- \\ f_+\end{pmatrix}
\Bigl\lvert
\begin{matrix}
-(T_{11}+\img I_-)f_-+T^*_{21}f_+&=0
\\
T_{21}f_-+(T^*_{22}-\img I_+)f_+&=0
\end{matrix} \right\}
\label{eq:n1}
\end{equation}
and
\begin{align}
&\ker(\fT^*+L_\mu\fT^*L^{-1}_\mu)=
\ker\begin{pmatrix}
-T_{11} & \frac{\lambda^2+1}{2}
T^*_{21} \\ \frac{\lambda^{-2}+1}{2}
T_{21} & T^*_{22}
\end{pmatrix}
\nonumber \\
&=\left\{\begin{pmatrix}f_- \\ f_+\end{pmatrix}
\Bigl\lvert
\begin{matrix}
-T_{11}f_-+\frac{\lambda^2+1}{2}
T^*_{21}f_+&=0
\\
\frac{\lambda^{-2}+1}{2}
T_{21}f_-+T^*_{22}f_+&=0
\end{matrix} \right\}\,.
\label{eq:n2}
\end{align}
Subtracting \eqref{eq:n1} from \eqref{eq:n2},
$\begin{pmatrix}f_- \\ f_+\end{pmatrix}\in
\fN_\img\cap\ker(\fT^*+L_\mu\fT^*L^{-1}_\mu)$
solves
\begin{equation}
\begin{cases}
\img f_-+\frac{\lambda^2-1}{2}
T^*_{21}f_+&=0\,, \\
\frac{\lambda^{-2}-1}{2}T_{21}f_-+
\img f_+&=0\,.
\end{cases}
\label{eq:n3}
\end{equation}
For $\lambda\neq1$, \ie
$\mu\in\bbC\setm\{-1,0,1\}$, this gives
\[
f_+=\frac{\img(\lambda^{-2}-1)}{2}
T_{21}f_-\,,\quad
f_-\in\fN_{k}(\abs{T_{21}}^2)\,,
\]
\[
k\dfn\left(\frac{2\lambda}{\lambda^2-1}\right)^2=
\left(\frac{\mu^2-1}{2\mu}\right)^2\,.
\]
Substituting this solution to \eqref{eq:n1}
further gives
\[
(T_{11}+\img I_-)f_-=\frac{\img(\lambda^{-2}-1)}{2}
\abs{T_{21}}^2f_-=
\frac{\img(\lambda^{-2}-1)}{2}kf_-
\]
\ie
\[
f_-\in\fN_{(\frac{\mu^2-1}{2\mu})^2}
(\abs{T_{21}}^2)\cap
\fN_{\frac{\img(\mu^2+1)}{2\mu}}(T_{11})\,.
\]
Now $\fN_{(\frac{\mu^2-1}{2\mu})^2}
(\abs{T_{21}}^2)$ can be nontrivial only if
$(\frac{\mu^2-1}{2\mu})^2>0$, \ie
$\mu\in\bbR\setm\{-1,0,1\}$. But for a real $\mu$,
$\fN_{\frac{\img(\mu^2+1)}{2\mu}}(T_{11})$ is trivial
and therefore
\[
\fN_\img\cap\ker(\fT^*+L_\mu\fT^*L^{-1}_\mu)=\{0\}
\quad\text{for all}\quad
\mu\in\bbC\setm\{-1,1\}\,.
\]
Note that, by \eqref{eq:n3}, $f_\pm=0$
for $\lambda=1$ ($\mu=0$). With $\mu=\img/z$
this shows $\sigma_p(N)\subseteq\{-\img,\img\}$
($N$ is injective by \eqref{eq:n0} and
Lemma~\ref{lem:sfN}).

Finally,
$\fN_{\pm\img}(N)=\{0\}$ by $\eqref{eq:n0b}$,
since $\fH_-\cap\fN_{\img}=\{0\}$ by
\eqref{eq:n1} (and similarly for
$\fH_-\cap\fN_{-\img}$) and since
$\fN_{\pm\img}\cap\dom\fN=\{0\}$. For
the latter,
consider $f\in\fN_\img\cap\dom\fN$;
then $(f,\img f)\in\fT^*$ and $(\exists f^\prime)$
$(f,f^\prime)\in\fN$. Since
$\fN\subseteq\fT^*\cap\fN^*$
\[
\img\norm{f}^2=\Re\braket{f^\prime,f}
\]
\ie $f=0$; the (Hilbert space) scalar product
$\braket{g,h}\dfn[g,Jh]$
and the norm $\norm{g}\dfn
\sqrt{\braket{g,g}}$, $g\in\fH$.
Similarly
$\fN_{-\img}\cap\dom\fN=\{0\}$.
\end{proof}
\begin{rem}
If $T$ is simple then it is standard without
eigenvalues \cite[Proposition~2.4]{Azizov03},
hence $\delta(T)=\bbC_*$;
but not conversely.
\end{rem}
%%%%%%%%%%%%%%%%%%%%%%%%%%%%%%%%%%%%%%%%%%%%%%%%%%%%%%%%%%%%%%
%%%%%%%%%%%%%%%%%%%%%%%%%%%%%%%%%%%%%%%%%%%%%%%%%%%%%%%%%%%%%%
%%%%%%%%%%%%%%%%%%%%%%%%%%%%%%%%%%%%%%%%%%%%%%%%%%%%%%%%%%%%%%
%%%%%%%%%%%%%%%%%%%%%%%%%%%%%%%%%%%%%%%%%%%%%%%%%%%%%%%%%%%%%%
\section{Application to boundary triples}
\label{sec:obt}
%%%%%%%%%%%%%%%%%%%%%%%%%%%%%%%%%%%%%%%%%%%%%%%%%%%%%%%%%%%%%%
%%%%%%%%%%%%%%%%%%%%%%%%%%%%%%%%%%%%%%%%%%%%%%%%%%%%%%%%%%%%%%
%%%%%%%%%%%%%%%%%%%%%%%%%%%%%%%%%%%%%%%%%%%%%%%%%%%%%%%%%%%%%%
%%%%%%%%%%%%%%%%%%%%%%%%%%%%%%%%%%%%%%%%%%%%%%%%%%%%%%%%%%%%%%
Here $\fH=(\fH,[\cdot,\cdot])$
is a Krein space with fundamental symmetry $J$,
$T$ is a closed symmetric relation in $\fH$
with equal defect numbers.
Let $\Pi_\Gamma=(\fL,\Gamma_0,\Gamma)$ be
a boundary triple for $T^+$.
The self-adjoint relations from
$\Self(T)$, defined by
\[
T_0\dfn\ker\Gamma_0\quad\text{and}\quad
T_1\dfn\ker\Gamma_1
\]
are disjoint, $T_0\cap T_1=T$,
and transversal, $T_0\hsum T_1=T^{+}$
\cite[Proposition~3.4]{Behrndt11}. Thus in
particular
$\Gamma_1(T_0)=\Gamma_0(T_1)=\fL$.
Let $N=N(T,T_0)$. Then
the canonical self-adjoint extensions $T_0$
and $T\hop\whJ(N)$
are also disjoint and transversal,
yet $T\hop\whJ(N)$ can be different from $T_1$.
\begin{prop}
Let $\Pi_\Gamma=(\fL,\Gamma_0,\Gamma_1)$
be a boundary triple for $T^+$. Then
\begin{equation}
T^+=T_1\hsum N\,,\quad N=N(T,T_0)
\label{eq:T1}
\end{equation}
where the componentwise sum is direct but
not necessarily orthogonal; it is orthogonal
iff at least one of the following equivalent
statements $(i)$--$(iii)$ holds:
\begin{SL}
\item[$(i)$]
$\whJ(N)\subseteq T_1$.
\item[$(ii)$]
$\whJ(N)=N(T,T_1)$.
\item[$(iii)$]
$T_1=T\hop\whJ(N)$.
\end{SL}
\end{prop}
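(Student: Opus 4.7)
The plan is first to establish the decomposition \eqref{eq:T1} together with its directness, and then to chain the equivalences (i)--(iii) and connect them to orthogonality of the componentwise sum.

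For the decomposition I would start from the transversality $T^+=T_0\hsum T_1$ of a boundary triple \cite[Proposition~3.4]{Behrndt11} and rewrite $T_0=T\hop N$ by Theorem~\ref{thm:1-1}. Since $T\subseteq T_1$, the componentwise addition collapses to $T^+=T_1+N$, which is \eqref{eq:T1}. Directness amounts to $T_1\cap N=\{(0,0)\}$: if $(f,f^\prime)$ lies in this intersection, then $N\subseteq T_0$ gives $(f,f^\prime)\in T_0\cap T_1=T$, while $N\subseteq T^\bot$ from Proposition~\ref{prop:N}(3) forces $(f,f^\prime)\in T\cap T^\bot=\{(0,0)\}$.

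For the three equivalences, (ii) $\Rightarrow$ (i) is immediate from $N(T,T_1)\subseteq T_1$, and (ii) $\Leftrightarrow$ (iii) is Theorem~\ref{thm:1-1} applied to $T_1$ in place of $T_0$. For (i) $\Rightarrow$ (ii), I would use Proposition~\ref{prop:N}(3) to place $\whJ(N)$ inside $\Sigma=T^+\cap T^\bot$; combined with (i) this gives $\whJ(N)\subseteq T_1\cap T^\bot=N(T,T_1)$, and the dimension count $\dim\whJ(N)=\dim N=d=\dim N(T,T_1)$ coming from Proposition~\ref{prop:N}(2) upgrades the inclusion to an equality.

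The final step identifies the orthogonality of the sum with (i). Orthogonality $T_1\bot N$ is with respect to the Hilbert metric $[\cdot,\whJ\cdot]$ on $\fK$, i.e.\ $[\whf,\whJ\whg]=0$ for all $\whf\in T_1$, $\whg\in N$; equivalently, $\whJ(N)\subseteq T_1^{[\bot]}$, the indefinite-orthogonal complement of $T_1$ in $\fK$. The key identity, which I expect to be the only point requiring a short direct computation from \eqref{eq:inner}, is that this $T_1^{[\bot]}$ coincides as a subset of $\fH^2$ with the Krein-space adjoint relation $T_1^+$ in $\fH$; once this is in place, self-adjointness $T_1=T_1^+$ gives $T_1^{[\bot]}=T_1$, so that $T_1\bot N$ is the same as $\whJ(N)\subseteq T_1$, which is (i). The main obstacle, such as it is, is only the bookkeeping between indefinite and Hilbert-space orthogonality in $\fK$ and the relational adjoint in $\fH$; once disposed of, the whole proposition rests on Theorem~\ref{thm:1-1} and Proposition~\ref{prop:N} with no further computation.
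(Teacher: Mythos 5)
Your argument is essentially the paper's (the decomposition from $T^+=T_0\hsum T_1$ and $T_0=T\hop N$, directness from $N\cap T_1\subseteq T_0\cap T_1=T$ together with $N\subseteq T^\bot$, and the identification of Hilbert-metric orthogonality of the sum with $(i)$ via $T_1^{[\bot]}=T_1^+=T_1$, equivalently $T_1^\bot=\whJ(T_1)$). All of that is sound. But there is one step that genuinely fails as written: in $(i)\Rightarrow(ii)$ you pass from the inclusion $\whJ(N)\subseteq T_1\cap T^\bot=N(T,T_1)$ to equality by the dimension count $\dim\whJ(N)=d=\dim N(T,T_1)$. The paper allows $d\leq\infty$, and for $d=\infty$ equality of dimensions does not upgrade an inclusion of closed subspaces to an equality (a proper closed infinite-dimensional subspace of an infinite-dimensional space has the same dimension). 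So for infinite defect numbers this step proves nothing.

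The repair is short and stays inside the tools you already invoke. From $(i)$, $\whJ(N)\subseteq T_1$ gives $T\hop\whJ(N)\subseteq T_1$; both are self-adjoint relations (the paper records just before the proposition that $T\hop\whJ(N)\in\Self(T)$), and a self-adjoint relation admits no proper self-adjoint extension, so $T\hop\whJ(N)=T_1$, which is $(iii)$; then $(ii)$ follows by intersecting with $T^\bot$, exactly as in your $(ii)\Leftrightarrow(iii)$ step via Theorem~\ref{thm:1-1}. Equivalently, one can argue as the paper does from $T^+=T_1\hop\whJ(N(T,T_1))$. With that substitution your proof is complete and matches the paper's route.
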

\begin{proof}
\eqref{eq:T1} is clear from
$T^+=T_0\hsum T_1$ and
$T_0=T\hop N$.
That $N$ is disjoint from $T_1$ follows from
$N\cap T_1\subseteq T_0\cap T_1=T$
and then using that $N$ and $T$ are disjoint.
The equivalences of statements
$(ii)$, $(iii)$ follow from
\begin{equation}
T^+=T_1\hop\whJ(N(T,T_1))
\label{eq:TpT1-b}
\end{equation}
while $(i)$ uses $T^\bot_1=\whJ(T_1)$.
\end{proof}
\begin{rems}
\begin{SL}
\item[1.]
The triple
$\Pi_{\Gamma^\prime}=(\fL,\Gamma^\prime_0,
\Gamma^\prime_1)$ with $\Gamma^\prime_0=\Gamma_1$,
$\Gamma^\prime_1=-\Gamma_0$ is a
boundary triple for $T^+$, called a transposed
boundary triple (\eg \cite{Derkach17}), so
\eqref{eq:TpT1-b} is just
Proposition~\ref{prop:N}(3)
with $T_0$ replaced by $\ker\Gamma^\prime_0=T_1$.
\item[2.]
One can, however, always choose a
boundary triple $\Pi_\Gamma$ for $T^+$
such that the sum in \eqref{eq:T1}
is orthogonal; Proposition~\ref{prop:fN} below.
\end{SL}
\end{rems}
The operators
$\Gamma_0\vrt_{\whJ(N)}$ and
$\Gamma_1\vrt_N$ are bijective, which
motivates the following definition
of their inverses.
\begin{defn}\label{defn:inverse}
Let $\Pi_\Gamma=(\fL,\Gamma_0,\Gamma_1)$
be a boundary triple for $T^+$ and
define bounded everywhere defined on $\fL$
operators as follows
\[
\Gamma^{(-1)}_0\dfn(\Gamma_0\vrt_{\whJ(N)})^{-1}\,,
\quad
\Gamma^{(-1)}_1\dfn(\Gamma_1\vrt_N)^{-1}\,,
\]
\[
\beta=\beta(\Gamma_0,\Gamma_1)\dfn
\Gamma_1\Gamma^{(-1)}_0
\]
where $N=N(T,T_0)$.
\end{defn}
The following assertions are straightforward computations.
\begin{prop}\label{prop:fN}
Let $\Pi_\Gamma=(\fL,\Gamma_0,\Gamma_1)$
be a boundary triple for $T^+$ with
Weyl family $M_\Gamma$, and let
$N=N(T,T_0)$.
\begin{SL}
\item[1)]
For an arbitrary relation $\Theta$ in $\fL$,
$T_\Theta\dfn\Gamma^{-1}(\Theta)\in\Ext(T)$
is given by
\[
T_\Theta=
T\hop\ran(\Gamma^{(-1)}_0+\Gamma^{(-1)}_1
(\Theta-\beta))\,.
\]
\item[2)]
In particular,
to a self-adjoint relation $T\hop\whJ(N)\in\Self(T)$
there corresponds a self-adjoint operator
$\beta\in\cB(\fL)$, \ie
\[
T\hop\whJ(N)=\ker(\Gamma_1-\beta\Gamma_0)\,.
\]
Moreover, the triple
$\Pi_{\Gamma^\beta}=(\fL,\Gamma^\beta_0,
\Gamma^\beta_1)$ with
\[
\begin{pmatrix}
\Gamma^\beta_0 \\ \Gamma^\beta_1
\end{pmatrix}=
\begin{pmatrix}
I & 0 \\ -\beta & I
\end{pmatrix}
\begin{pmatrix}
\Gamma_0 \\ \Gamma_1
\end{pmatrix}
\]
is a boundary triple for $T^+$ such that
\[
\ker\Gamma^\beta_0=T_0\,,\quad
\ker\Gamma^\beta_1=
T\hop\whJ(N)
\]
and with Weyl family
$\bbC\ni z\mapsto
M_{\Gamma^\beta}(z)=M_\Gamma(z)-\beta$.
\end{SL}
\end{prop}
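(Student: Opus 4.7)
My plan is to exploit the orthogonal decomposition $T^+ = T \hop N \hop \whJ(N)$ furnished by Theorem~\ref{thm:1-1} together with Proposition~\ref{prop:N}(3), combined with $T_0 = T \hop N = \ker\Gamma_0$. For part~1), every $\whf \in T^+$ decomposes uniquely as $\whf = \whf_T + \whf_N + \whf_{\whJ(N)}$. Applying $\Gamma_0$ annihilates $T \hop N$, hence $\whf_{\whJ(N)} = \Gamma^{(-1)}_0(\Gamma_0\whf)$; bijectivity of $\Gamma_0\vrt_{\whJ(N)}$ follows from $T^+ = T_0 \hsum \whJ(N)$ with $\ran\Gamma_0 = \fL$ (surjectivity) and $\whJ(N) \cap T_0 = \{0\}$ (injectivity, via $T \bot \Sigma$ together with $N \cap \whJ(N) = \{0\}$ from Proposition~\ref{prop:N}(3)). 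A parallel argument, based on transversality $T_0 \hsum T_1 = T^+$ and $T \subseteq \ker\Gamma_1$, gives $\Gamma_1\vrt_N$ bijective. Applying $\Gamma_1$ to the decomposition and using $\Gamma_1\whf_{\whJ(N)} = \beta\Gamma_0\whf$ then yields $\whf_N = \Gamma^{(-1)}_1(\Gamma_1\whf - \beta\Gamma_0\whf)$, so imposing $(\Gamma_0\whf, \Gamma_1\whf) \in \Theta$ produces the claimed range formula.

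For part~2), I first verify $\whJ(N) \in \cN$: by Proposition~\ref{prop:N}(3), $\Sigma = N \hop \whJ(N)$ is a Krein subspace with fundamental symmetry $\whJ\vrt_\Sigma$; since $N$ is hyper-maximal neutral in $\Sigma$ and fundamental symmetries preserve hyper-maximal neutrality, so is $\whJ(N)$, and therefore $T \hop \whJ(N) \in \Self(T)$ by Theorem~\ref{thm:1-1}. Specializing part~1) to $\Theta = \beta$ gives $T_\beta = T \hop \ran\Gamma^{(-1)}_0 = T \hop \whJ(N)$, and the identification $T_\beta = \Gamma^{-1}(\beta) = \ker(\Gamma_1 - \beta\Gamma_0)$ is immediate. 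To show $\beta = \beta^*$, I apply Green's identity to $\whf, \whg \in \whJ(N)$: neutrality of $\whJ(N)$ together with polarization gives $[\whf,\whg] = 0$, which by \eqref{eq:inner} forces $[f, g'] = [f', g]$, whence Green's identity reduces to $\braket{u, \beta v} = \braket{\beta u, v}$ with $u = \Gamma_0\whf$, $v = \Gamma_0\whg$; since $\beta \in \cB(\fL)$, this is self-adjointness.

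The remaining assertions about $\Pi_{\Gamma^\beta}$ are algebraic. Surjectivity of $(\Gamma^\beta_0, \Gamma^\beta_1)$ is inherited from $(\Gamma_0,\Gamma_1)$ because the block matrix $\bigl(\begin{smallmatrix} I & 0 \\ -\beta & I \end{smallmatrix}\bigr)$ is an automorphism of $\fL^2$; Green's identity for $\Pi_{\Gamma^\beta}$ reduces to that for $\Pi_\Gamma$ precisely because $\beta = \beta^*$ (the $\beta$-cross terms cancel); the kernels $\ker\Gamma^\beta_0 = \ker\Gamma_0 = T_0$ and $\ker\Gamma^\beta_1 = \ker(\Gamma_1 - \beta\Gamma_0) = T \hop \whJ(N)$ are immediate; and $M_{\Gamma^\beta}(z) = M_\Gamma(z) - \beta$ follows by applying the definition to $\whf \in zI \cap T^+$. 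The main subtle point is the identification $\whJ(N) \in \cN$ via the symmetric roles of $N$ and $\whJ(N)$ in $\Sigma$; once that is in hand, everything else is a mechanical consequence of part~1) and the bilinear identities.
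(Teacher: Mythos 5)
Your proposal is correct and is precisely the computation the paper declares to be straightforward (it prints no proof of Proposition~\ref{prop:fN}): the unique decomposition $\whf=\whf_T+\whf_N+\whf_{\whJ(N)}$ along $T^+=T\hop N\hop\whJ(N)$, the bijectivity of $\Gamma_0\vrt_{\whJ(N)}$ and $\Gamma_1\vrt_N$ already noted before Definition~\ref{defn:inverse}, the Green identity on $\whJ(N)$ for $\beta=\beta^*$, and the algebraic verification for $\Pi_{\Gamma^\beta}$ are exactly the intended steps. No gaps.
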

\begin{rem}
By using $(\forall z\in\bbC)$
\[
P_{\whJ(N)}\whgm_\Gamma(z)\subseteq
\Gamma^{(-1)}_0\,,\quad
P_N\whgm_\Gamma(z)=
\Gamma^{(-1)}_1M_{\Gamma^\beta}(z)
\]
it follows that
$M_\Gamma(z)=\Gamma_1\whgm_\Gamma(z)$ is the sum
of a self-adjoint constant $\beta$
and a ``smaller'' Weyl family
$M_{\Gamma^\beta}(z)=\Gamma_1P_N\whgm_\Gamma(z)$.
In this way, for example,
the proof of \cite[Theorem~3.2]{Behrndt09a}
is considerably simplified, since our
$\beta$ is precisely $\alpha$ there.
\end{rem}
The transformed boundary triple $\Pi_{\Gamma^\beta}$
is a special case of the situation considered in the
next example.
\begin{exam}\label{exam:K}
Let $\Pi_\Gamma=(\fL,\Gamma_0,\Gamma_1)$ and
$\Pi_{\Gamma^\prime}=
(\fL,\Gamma^\prime_0,\Gamma^\prime_1)$
be the boundary triples for $T^+$.
If $\Gamma^\prime_0=\Gamma_0$
then (\cf \cite[Proposition~2.2]{Derkach15a},
\cite[Corollary~2.5.6]{Behrndt20} in
a Hilbert space)
there is a self-adjoint operator $K\in\cB(\fL)$
such that
\begin{equation}
\Gamma^\prime_1=\Gamma_1-K\Gamma_0\quad
\text{on}\quad T^+\,.
\label{eq:drK}
\end{equation}
It follows that
\[
K=\beta-\beta_0\quad\text{with}\quad
\beta_0\dfn
\Gamma^\prime_1\Gamma^{(-1)}_0\in\cB(\fL)\,.
\]
A generalization is as follows.
If $\Pi_\Gamma=(\fL,\Gamma_0,\Gamma_1)$ and
$\Pi_{\Gamma^\prime}=
(\fL,\Gamma^\prime_0,\Gamma^\prime_1)$
are the boundary triples for $T^+$,
then the next two conditions are equivalent:
\begin{SL}
\item[$(a)$]
There is an operator $K\in\cB(\fL)$ such that
\eqref{eq:drK} holds.
\item[$(b)$]
$\Gamma^\prime_1=\Gamma_1$ on $N=N(T,T_0)$.
\end{SL}
\begin{proof}
Any $\whu\in\whJ(N)$ is of the form
$\whu=\Gamma^{(-1)}_0l$, $l\in\fL$, so
$(\Gamma_1-\Gamma^\prime_1)\whu=
(\beta-\beta_0)l\equiv Kl$;
hence $(a)\Leftarrow(b)$. The converse
$(a)\Rightarrow(b)$ is clear.
If $\Gamma_0=\Gamma^\prime_0$ then
$(b)$ is automatically satisfied and
the self-adjointness of $K$ is due to
the Green identity.
\end{proof}
\end{exam}
For the illustration of Definition~\ref{defn:inverse},
two other examples are thus.
\begin{exam}
In \cite[Example~3.1]{Azizov03}
$\fH=(\bbC^4,[\cdot,\cdot])$ is a Pontryagin
space with fundamental symmetry
$J(c_1,c_2,c_3,c_4)=(c_4,c_3,c_2,c_1)$
for $c_1$, $\ldots$, $c_4\in\bbC$, and
$[c,d]=c^*Jd$ for $c,d\in\fH$.
A simple symmetric operator
\[
T=\{((c_1,0,0,0),(0,c_1,0,0))\vrt c_1\in\bbC \}
\]
its adjoint
\[
T^+=\{((c_1,c_2,c_3,c_4),
(c_5,c_6,c_7,c_3))\vrt c_1,\ldots,c_7\in\bbC \}
\]
and the defect subspace
\[
\fN_z(T^+)=\{(c_1,c_2,zc_4,c_4)\vrt
c_1,c_2,c_4\in\bbC \}\,,\quad z\in\bbC\,.
\]
Since $T$ is simple, a generalized von Neumann
formula applies
\[
T^+=T\hsum\whfN_{\ol{z}}(T^+)\hsum
\whfN_z(T^+)\hsum\whfN_{z_0}(T^+)
\]
for $z$, $z_0\in\bbC_+$, $z\neq z_0$.
On the other hand, the triple
$\Pi_\Gamma=(\bbC^3,\Gamma_0,\Gamma_1)$ with
\begin{align*}
\Gamma_0((c_1,c_2,c_3,c_4),
(c_5,c_6,c_7,c_3))=&
(c_1-c_6,c_2,c_4)\,,
\\
\Gamma_1((c_1,c_2,c_3,c_4),
(c_5,c_6,c_7,c_3))=&
(c_3,c_7,c_5)
\end{align*}
is a boundary triple for $T^+$;
the distinguished self-adjoint extensions
of $T$ are given by
\begin{align*}
T_0=&
\{((c_1,0,c_3,0),(c_5,c_1,c_7,c_3))\vrt
c_1,c_3,c_5, c_7\in\bbC \}\,,
\\
T_1=&
\{((c_1,c_2,0,c_4),(0,c_6,0,0))\vrt
c_1,c_2,c_4, c_6\in\bbC \}\,.
\end{align*}
Therefore, $T^+=T\hop N\hop\whJ(N)$ with
\begin{align*}
N=&\{((0,0,c_4,0),(c_1,0,c_3,c_4))\vrt
c_1,c_3,c_4\in\bbC \}\,,
\\
\whJ(N)=&\{((c_1,c_2,0,c_4),(0,-c_1,0,0))\vrt
c_1,c_2,c_4\in\bbC \}\,.
\end{align*}
One verifies that
$T_1=T\hop\whJ(N)$.
The operators
\begin{align*}
\Gamma^{(-1)}_0(c_1,c_2,c_3)=&
((c_1/2,c_2,0,c_3),(0,-c_1/2,0,0))\,,
\\
\Gamma^{(-1)}_1(c_1,c_2,c_3)=&
((0,0,c_1,0),(c_3,0,c_2,c_1))
\end{align*}
and $\beta=0$.
\end{exam}
\begin{exam}
In \cite[Theorem~2.2(iii)]{Derkach21}
$\fH$ is a Hilbert space,
a self-adjoint relation $T_0$ is injective,
a closed symmetric relation $T$ is given by
\[
T=(T^{-1}_0\vrt_{\ker G^*})^{-1}
\]
where $G\in\cB(\fL,\fH)$ is injective
and satisfies $G^*(\ran T_0)=\fL$;
it follows in particular that $\ran G$ is closed.
The adjoint and the defect
subspace are given by
\[
T^*=T_0\hsum(\ran G\times\{0\})\,,
\quad
\fN_\img(T^*)=(T^{-1}_0+\img I)^{-1}(\ran G)
\]
and the boundary triple
$\Pi_\Gamma=(\fL,\Gamma_0,\Gamma_1)$ for $T^*$
is defined by
\[
\Gamma_0\begin{pmatrix}T^{-1}_0f^\prime+G\vp \\
f^\prime\end{pmatrix}
=\vp\,,\quad
\Gamma_1\begin{pmatrix}T^{-1}_0f^\prime+G\vp \\
f^\prime\end{pmatrix}
=G^*f^\prime+E\vp\,,
\]
\[
\text{with an arbitrary}\quad
E=E^*\in\cB(\fL)
\]
for $f^\prime\in\ran T_0$ and $\vp\in\fL$.
It follows that
\[
N=(T^{-1}_0\vrt_\cX)^{-1}\,,\quad
\whJ(N)=-T^{-1}_0\vrt_\cX\,,
\]
\[
\cX\dfn(T^{-1}_0-\img I)^{-1}
(\fN_\img(T^*))
=(T^{-2}_0+I)^{-1}(\ran G)\,.
\]
We remark that
\[
\cX=\ran G\,.
\]
Let $\vp\in\fL$ solve
$G^*(T^{-2}_0+I)^{-1}G\vp=0$; then
$(T^{-1}_0-\img I)^{-1}G\vp=0$ and hence
$\vp=0$. This shows $\cX\subseteq\ran G$.
Since $\cX$ is closed, $\ran G$ is the orthogonal
sum of $\cX$ and $\cX^\bot\cap\ran G$.
Let $f\in\cX^\bot$, then
$G^*(T^{-2}_0+I)^{-1}f=0$. If moreover
$f=G\vp\in\ran G$, then, as previously,
the latter yields $\vp=0$; hence
$\cX=\ran G$.
As a result, $(\forall \vp\in\fL)$
\[
\Gamma^{(-1)}_0\vp=
\begin{pmatrix}(T^{-2}_0+I)^{-1} G\vp \\
- T^{-1}_0(T^{-2}_0+I)^{-1}
G\vp\end{pmatrix}\,,\quad
\Gamma^{(-1)}_1\vp=
\begin{pmatrix}
T^{-1}_0(G^*\vrt_{\ran G})^{-1}\vp \\
(G^*\vrt_{\ran G})^{-1}\vp\end{pmatrix}
\]
and moreover
\[
\beta=E-E_0
\]
where the self-adjoint $E_0\in\cB(\fL)$
is defined by
\[
E_0\dfn G^*T^{-1}_0(T^{-2}_0+I)^{-1}G\,.
\]
(Hence $\beta=0$ iff
$E=E_0$.) One verifies that $T\hop\whJ(N)$
is given similarly to $T_1$, but with
$E$ in the boundary condition
$G^*f^\prime+E\vp=0$
replaced by $E_0$, \ie
$G^*f^\prime+E_0\vp=0$.
\end{exam}
Below, $T^\prime$ is another
symmetric relation in (possibly) another
Krein space $(\fH^\prime,[\cdot,\cdot]^\prime)$
with fundamental symmetry $J^\prime$. $T^\prime$
has the same as $T$ equal defect numbers,
\ie $(d,d)$, and
$\Pi_{\Gamma^\prime}=(\fL,\Gamma^\prime_0,
\Gamma^\prime_1)$ is a boundary triple
for $T^{\prime\,+}$. The associated inverse operators
in Definition~\ref{defn:inverse} are
denoted by $\Gamma^{\prime(-1)}_0$,
$\Gamma^{\prime(-1)}_1$, with
$N=N(T,T_0)$ replaced by
$N^\prime=N(T^\prime,T^\prime_0)$,
$T^\prime_0\dfn\ker\Gamma^\prime_0$,
$\whJ$ replaced by $\whJ^\prime$.
Also (see Proposition~\ref{prop:N}(3))
\[
\Sigma\dfn T^+\cap T^\bot\,,\quad
\Sigma^\prime\dfn
T^{\prime\,+}\cap T^{\prime\,\bot}\,.
\]
\begin{lem}\label{lem:Vos}
Let $T$ and $T^\prime$ be closed
symmetric relations in Krein spaces
$\fH$ and $\fH^\prime$. Assume both $T$
and $T^\prime$ have the same equal defect numbers
and let
$\Pi_\Gamma=(\fL,\Gamma_0,\Gamma_1)$
and
$\Pi_{\Gamma^\prime}=(\fL,\Gamma^\prime_0,\Gamma^\prime_1)$
be the boundary triples for
$T^+$ and $T^{\prime\,+}$.
\begin{SL}
\item[1)]
The relation $V_0\co\fK\lto\fK^\prime$
defined by
\[
V_0\dfn\Gamma^{\prime\,-1}\Gamma=
(V_0)_s\hop(\{0\}\times T^\prime)
\]
is a unitary relation with the operator part
$(V_0)_s$ on $T^+$ given by
\[
(V_0)_s=\Gamma^{\prime(-1)}_0\Gamma_0+
\Gamma^{\prime(-1)}_1(\Gamma_1-\beta^\prime\Gamma_0)
\,,
\quad
\beta^\prime=\beta(\Gamma^\prime_0,\Gamma^\prime_1)\,.
\]
\item[2)]
$(V_0)_s\vrt_\Sigma$ is a standard unitary
operator from a Krein space
$(\Sigma,[\cdot,\cdot])$ to a Krein
space $(\Sigma^\prime,[\cdot,\cdot]^\prime)$,
with the inverse
\[
((V_0)_s\vrt_\Sigma)^{-1}=
\Gamma^{(-1)}_0\Gamma^\prime_0+
\Gamma^{(-1)}_1(\Gamma^\prime_1-
\beta\Gamma^\prime_0)\quad\text{on}\quad
\Sigma^\prime\,.
\]
\item[3)]
$N=N(T,T_0)$ and $N^\prime=N(T^\prime,
T^\prime_0)$ are isomorphic; the two homeomorphisms
$(N,[\cdot,\whJ\cdot])\lto
(N^\prime,[\cdot,\whJ^\prime\cdot]^\prime)$ are
given by
\[
w_0\dfn\whJ^\prime
\Gamma^{\prime(-1)}_0
\Gamma_0\whJ\vrt_N\quad\text{and}
\quad
w_1\dfn\Gamma^{\prime(-1)}_1\Gamma_1\vrt_N
=w^{*\,-1}_0\,.
\]
\end{SL}
\end{lem}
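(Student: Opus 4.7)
To prove part~$1$, I exploit the decomposition $T^+=T\hop N\hop\whJ(N)$ from Proposition~\ref{prop:N}(3) together with the vanishing pattern $\Gamma_0\vrt_{T\hop N}=0$, $\Gamma_1\vrt_T=0$, and $\Gamma_1=\beta\Gamma_0$ on $\whJ(N)$ from Proposition~\ref{prop:fN}(2) (and the primed analogues). Writing $\whf\in T^+$ uniquely as $\whu_T+\whu_N+\whu_J$ produces $\Gamma_0\whf=\Gamma_0\whu_J$ and $\Gamma_1\whf=\Gamma_1\whu_N+\beta\Gamma_0\whu_J$; decomposing $\whf'\in T^{\prime\,+}$ analogously and solving $\Gamma\whf=\Gamma'\whf'$ componentwise determines $\whu_J'=\Gamma^{\prime(-1)}_0\Gamma_0\whf\in\whJ'(N')$ and $\whu_N'=\Gamma^{\prime(-1)}_1(\Gamma_1-\beta'\Gamma_0)\whf\in N'$, with $\whu_T'\in T'$ arbitrary. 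This yields both the displayed formula for $(V_0)_s$ and the identification $\mul V_0=\ker\Gamma'=T'$. Unitarity is immediate from the Green identity in the form $[\whf,\whg]=[\Gamma\whf,\Gamma\whg]_{\fK_\circ}$: one application on each side gives $[\whf,\whg]=[\whf',\whg']'$ for any $(\whf,\whf'),(\whg,\whg')\in V_0$, so $V_0$ is isometric; the swap $T\leftrightarrow T'$ makes $V_0^{-1}$ isometric as well, hence $V_0^{-1}=V_0^+$.

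For part~$2$, define $W\dfn\Gamma_0^{(-1)}\Gamma'_0+\Gamma_1^{(-1)}(\Gamma'_1-\beta\Gamma'_0)$ on $\Sigma'$, the formula with $T$ and $T'$ interchanged. The four identities $\Gamma_0\Gamma_0^{(-1)}=\Gamma_1\Gamma_1^{(-1)}=I$, $\Gamma_0\Gamma_1^{(-1)}=0$, $\Gamma_1\Gamma_0^{(-1)}=\beta$ (which follow from Definition~\ref{defn:inverse} and Proposition~\ref{prop:fN}(2)) give $\Gamma(W\whf')=\Gamma'\whf'$ on $\Sigma'$, and since $(V_0)_s\whf$ is characterized as the unique element of $\Sigma'$ with $\Gamma'$-image $\Gamma\whf$, this forces $(V_0)_s\vrt_\Sigma\circ W=I_{\Sigma'}$, with the reverse composition by symmetry. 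Boundedness of both compositions is routine, and combined with isometry from part~$1$ this exhibits $(V_0)_s\vrt_\Sigma$ as a standard unitary operator. Part~$3$ follows with one more invocation of the Green identity: $w_1=(V_0)_s\vrt_N$ (since $\Gamma_0\whu=0$ on $N$) and $w_0$ are both compositions of bounded bijections, hence homeomorphisms $N\to N'$. The identity $w_1=w_0^{*\,-1}$ is equivalent to $[w_1\whu,\whJ'w_0\whv]'=[\whu,\whJ\whv]$ for $\whu,\whv\in N$; applying the Green identity on $\fH$ to $\whu\in N$ and $\whJ\whv\in\whJ(N)$ reduces the right side to $\img\braket{\Gamma_1\whu,\Gamma_0\whJ\whv}$, and applying it on $\fH'$ to $w_1\whu\in N'$ and $\whJ'w_0\whv=\Gamma^{\prime(-1)}_0\Gamma_0\whJ\whv\in\whJ'(N')$ reduces the left side to the same scalar.

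The main subtlety is the bookkeeping of vanishing patterns: one must carefully distinguish $T\hop\whJ(N)$ from $T_1=\ker\Gamma_1$ (these need not coincide), remember that $\Gamma_0$ annihilates $N$ rather than $\whJ(N)$, and keep $\beta$ and $\beta'$ straight when switching between the two triples. With these distinctions in place, the entire lemma reduces to algebra on the four $\Gamma$--$\Gamma^{(-1)}$ identities together with exactly one application of the Green identity per assertion.
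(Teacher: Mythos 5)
Your componentwise solution of $\Gamma\whf=\Gamma^\prime\whh$ in $T^\prime\hop N^\prime\hop\whJ^\prime(N^\prime)$ in part 1), your Green-identity computation in part 3), and the closing bookkeeping remarks are all correct and coincide with the paper's own argument. In part 2) you replace the paper's computation of the Krein-space adjoint of $Q\dfn(V_0)_s\vrt_\Sigma$ by an explicit bounded two-sided inverse built from the four $\Gamma$--$\Gamma^{(-1)}$ identities; that is a legitimate, arguably more transparent, alternative: a bounded, everywhere defined, boundedly invertible isometry between the Krein spaces $(\Sigma,[\cdot,\cdot])$ and $(\Sigma^\prime,[\cdot,\cdot]^\prime)$ is automatically standard unitary, because $Q^{-1}\subseteq Q^{+}$, $\mul Q^{+}=\{0\}$ by nondegeneracy of $\dom Q=\Sigma$, and $Q^{-1}$ is already an everywhere defined operator on $\Sigma^\prime$, so the inclusion must be an equality.

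The genuine gap is your proof that $V_0$ is a \emph{unitary} relation. For relations, ``$V_0$ isometric'' means $V_0^{-1}\subseteq V_0^{+}$, while ``$V_0^{-1}$ isometric'' means $V_0\subseteq(V_0^{-1})^{+}=(V_0^{+})^{-1}$, which upon inverting is the \emph{same} inclusion $V_0^{-1}\subseteq V_0^{+}$. Every isometric relation has an isometric inverse, so the ``swap $T\leftrightarrow T^\prime$'' produces no new information and the reverse inclusion $V_0^{+}\subseteq V_0^{-1}$ is never established. It also cannot follow from the Green identity alone: replacing $\Gamma^\prime$ by a proper isometric restriction of itself would leave your argument intact, yet the resulting $V_0$ would fail to be unitary. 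What is actually needed is that $\Gamma$ and $\Gamma^\prime$ are unitary relations $\fK\lto\fK_\circ$ (for ordinary boundary triples this follows from surjectivity together with the Green identity, \cite[Proposition~3.4]{Behrndt11}) and that $\ran\Gamma=\ran\Gamma^\prime=\fL^2$. Then, for $(\whh,\whf)\in V_0^{+}$, varying $\whk$ over $\Gamma^{\prime\,-1}(m)$ for a fixed $(\whg,m)\in\Gamma$ shows $\whh\,[\bot]\,\ker\Gamma^\prime=T^\prime$, \ie $\whh\in T^{\prime\,+}=\dom\Gamma^\prime$; choosing $l$ with $(\whh,l)\in\Gamma^\prime$ and using the isometry of $\Gamma^\prime$ turns the defining identity of $V_0^{+}$ into $[\whg,\whf]=[m,l]$ for all $(\whg,m)\in\Gamma$ (inner products in $\fK$ and $\fK_\circ$), whence $(\whf,l)\in\Gamma$ by $\Gamma^{+}=\Gamma^{-1}$ and so $(\whf,\whh)\in V_0$. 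Without some such appeal to the maximality of $\Gamma$ and $\Gamma^\prime$, rather than to their isometry only, the unitarity claim in part 1) remains unproved.
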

\begin{proof}
1) $(\whf,\whh)\in (V_0)_s$
iff $(\whf,\whh)\in T^+\times T^{\prime\,+}$
such that $\Gamma_0\whf=\Gamma^\prime_0\whh$ and
$\Gamma_1\whf=\Gamma^\prime_1\whh$ and
$\whh=\whu+\whv$ for some
$\whu\in N^\prime$, $\whv\in\whJ^\prime(N^\prime)$
(recall that $\Sigma^{\prime}=
N^\prime\hop\whJ^\prime(N^\prime)$);
that is,
$\whv=\Gamma^{\prime(-1)}_0\Gamma_0\whf$ and
$\whu=\Gamma^{\prime(-1)}_1
(\Gamma_1\whf-\Gamma^\prime_1\whv)$.

2) To show that $Q\dfn(V_0)_s\vrt_\Sigma$ is a standard
unitary operator $\Sigma\lto\Sigma^\prime$,
we use that $Q=V_0\cap(\Sigma\times\Sigma^\prime)$;
then the adjoint
\[
Q^+=(\Sigma^\prime\times\Sigma)\cap
(\ol{V^{-1}_0\hsum(\Sigma^{\prime\,+}\times
\Sigma^+ ) })
\]
where the adjoint of $\Sigma\subseteq\fK$
is given by $\Sigma^+=T\hop\whJ(T)$, and similarly
for the adjoint of $\Sigma^\prime\subseteq\fK^\prime$.
Since $V_0\hsum(\Sigma\times\Sigma^\prime)$ is
a closed relation, so is
$V^{-1}_0\hsum(\Sigma^{\prime\,+}\times
\Sigma^+ )$; hence
\begin{align*}
(Q^{+})^{-1}=&(\Sigma\times\Sigma^\prime)\cap
(V_0\hsum(\Sigma^{+}\times
\Sigma^{\prime\,+} ) )
\\
=&(\Sigma\times\Sigma^\prime)\cap
((V_0)_s\hsum(\Sigma^{+}\times
\Sigma^{\prime\,+} ) )
\\
=&
((V_0)_s\hsum(\Sigma^{+}\times
\{0\} ) )\vrt_\Sigma
\\
=&
(Q\hsum(\Sigma^{+}\times
\{0\} ) )\vrt_\Sigma=Q\,.
\end{align*}
The inverse $Q^{-1}$ is calculated straightforwardly.

3) That $w_0$ and $w_1$ are bijective is
clear, we only show $w_1=w^{*\,-1}_0$.
The latter equality is equivalent to
$(\forall l,l^\prime\in\fL)$
\begin{equation}
[\Gamma^{(-1)}_0l,\Gamma^{(-1)}_1l^\prime]=
[\Gamma^{\prime(-1)}_0l,
\Gamma^{\prime(-1)}_1l^\prime]^\prime
\label{eq:llp}
\end{equation}
where $[\cdot,\cdot]$ on the left
(resp. $[\cdot,\cdot]^\prime$ on the right)
is an indefinite inner product in $\fK$
(resp. $\fK^\prime$); recall
\eqref{eq:inner}.
On the other hand, since $(V_0)_s\vrt_\Sigma$
is standard unitary
$\Sigma\lto\Sigma^\prime$,
$(\forall\whf,\whg\in \Sigma)$
\begin{equation}
[\whf,\whg]=
[(V_0)_s\whf,(V_0)_s\whg]^\prime\,.
\label{eq:V0sun}
\end{equation}
For $\whf\in\whJ(N)$,
$\whg\in N$, \eqref{eq:V0sun} reads as
\eqref{eq:llp} with
$l=\Gamma_0\whf$, $l^\prime=\Gamma_1\whg$.
\end{proof}
\begin{rem}
$w_0$ is not necessarily
Hilbert unitary, \ie
$w_1=w_0$ need not hold. As a quick example,
let $\Gamma^\prime=
\bigl(\begin{smallmatrix}\varkappa^{-1}I
& 0 \\ 0 & \varkappa I \end{smallmatrix}\bigr)\Gamma$,
$\varkappa\in\bbR\setm\{0\}$. Then
\[
(V_0)_s=\varkappa^{-1}P_N+\varkappa
P_{\whJ(N)}\,,\quad
w_0=\varkappa I\vrt_N\,,\quad
w_1=\varkappa^{-1}I\vrt_N\,.
\]
On the other hand,
with $\Pi_{\Gamma^\prime}=(\fL,\Gamma_0,
\Gamma_1-K\Gamma_0)$ as in Example~\ref{exam:K}
\[
(V_0)_s=(I+\Gamma^{(-1)}_1K
\Gamma_0)P_{\Sigma}\,,\quad
w_0=w_1=I\vrt_N\,.
\]
\end{rem}
By using Lemma~\ref{lem:Vos}, in the remaining
part of this section we prove the following theorem.
\begin{thm}\label{thm:l}
Let $T$ and $T^\prime$ be closed
symmetric relations in Krein spaces
$\fH$ and $\fH^\prime$ with fundamental
symmetries $J$ and $J^\prime$. Assume both $T$
and $T^\prime$ have the same equal defect numbers
and let
$\Pi_\Gamma=(\fL,\Gamma_0,\Gamma_1)$
and
$\Pi_{\Gamma^\prime}=(\fL,\Gamma^\prime_0,\Gamma^\prime_1)$
be the boundary triples for
$T^+$ and $T^{\prime\,+}$.
Let $T_0\dfn\ker\Gamma_0$,
$T^\prime_0\dfn\ker\Gamma^\prime_0$,
$N=N(T,T_0)$, and $N^\prime=N(T^\prime,T^\prime_0)$.
Finally, let
$\cV(\Gamma,\Gamma^\prime)$ denote the
set of all relations
$V\co\fK\lto\fK^\prime$ such that
$\Gamma^\prime=\Gamma V^{-1}$ holds.
\begin{SL}
\item[1)]
There is an operator (resp. injective operator)
in $\cV(\Gamma,\Gamma^\prime)$ iff
there is a surjection
(resp. bijection) $\tau\co T\lto T^\prime$.
In particular, every operator
$V\in \cV(\Gamma,\Gamma^\prime)$
satisfies $V\vrt_T=\tau$, $\dom V\supseteq T^+$, and
is of the matrix form
\begin{equation}
V=
\begin{pmatrix}
\tau & * & * & * \\
0 & w_1 & w_{01} & * \\
0 & 0 & \whJ^\prime w_0\whJ & * \\
0 & 0 & 0 & *
\end{pmatrix}\co
\begin{matrix}
T \\ \hop \\ N \\ \hop \\ \whJ(N)
\\ \hop \\ \whJ(T)
\end{matrix}\lto
\begin{matrix}
T^\prime \\ \hop \\ N^\prime \\ \hop \\
\whJ^\prime(N^\prime) \\ \hop \\
\whJ^\prime(T^\prime)
\end{matrix}
\label{eq:Vform}
\end{equation}
where
\[
w_{01}\dfn\Gamma^{\prime(-1)}_1
(\beta-\beta^\prime)\Gamma_0\vrt_{\whJ(N)}\,.
\]
\item[2)]
There is a standard unitary operator
in $\cV(\Gamma,\Gamma^\prime)$ iff
$T$ and $T^\prime$ are isomorphic.
In particular,
every standard unitary operator
$V\in\cV(\Gamma,\Gamma^\prime)$
is parametrized by a
homeomorphism
$B\in\cB(\whJ^\prime(T^\prime_0),\whJ(T_0))$
and a bounded everywhere defined
self-adjoint operator
$\Theta$ in a Hilbert space
$(\whJ^\prime(T^\prime),
[\cdot,\whJ^\prime\cdot]^\prime)$.
Namely
\begin{equation}
V=
\begin{pmatrix}
B^{-1} & 0 \\
\img\whJ^\prime EB^{-1} & \whJ^\prime B^*\whJ
\end{pmatrix}\co
\begin{matrix}
\whJ(T_0) \\ \hop \\ T_0
\end{matrix}\lto
\begin{matrix}
\whJ^\prime(T^\prime_0) \\ \hop \\
T^\prime_0
\end{matrix}
\label{eq:stVform}
\end{equation}
where $B$ is of the form
\begin{equation}
B=
\begin{pmatrix}
\whJ\tau^*\whJ^\prime & 0 \\
* & \whJ w^{-1}_0\whJ^\prime
\end{pmatrix}\co
\begin{matrix}
\whJ^\prime(T^\prime) \\ \hop \\
\whJ^\prime(N^\prime)
\end{matrix}\lto
\begin{matrix}
\whJ(T) \\ \hop \\
\whJ(N)
\end{matrix}
\label{eq:Bform}
\end{equation}
and where $\tau$ is a homeomorphism
$T\lto T^\prime$.
An operator
\begin{equation}
E\dfn E_0\hop\Theta
\label{eq:E}
\end{equation}
is a bounded everywhere defined
canonical self-adjoint extension
of the closed symmetric operator
\[
E_0\dfn-\img\whJ^\prime
\Gamma^{\prime(-1)}_1(\beta-\beta^\prime)
\Gamma^\prime_0\vrt_{\whJ^\prime(N^\prime)}
\]
in a Hilbert space $(\whJ^\prime(T^\prime_0),
[\cdot,\whJ^\prime\cdot]^\prime)$.
\end{SL}
\end{thm}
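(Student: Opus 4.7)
The proof rests on Lemma~\ref{lem:Vos} and on the orthogonal decomposition $\fK=T\hop N\hop\whJ(N)\hop\whJ(T)$ that follows from the hyper-maximal neutrality of $T_0$ and Proposition~\ref{prop:N}(3) (analogously for $\fK^\prime$). For Part~1, reading $\Gamma^\prime=\Gamma V^{-1}$ backwards as $\ker\Gamma^\prime=V(\ker\Gamma)$ forces every operator $V\in\cV(\Gamma,\Gamma^\prime)$ to restrict to a surjection $\tau\dfn V\vrt_T\co T\lto T^\prime$. For the converse I would adopt \eqref{eq:Vform} as a definition (the given surjection $\tau$ in the $(1,1)$-slot and arbitrary $\cB$-entries for the stars) and verify $\Gamma^\prime V=\Gamma$ on $T^+$ block-by-block: on $T$ trivially; on $N\subseteq\ker\Gamma_0$ the constraint reduces to $\Gamma^\prime_1 V\whg=\Gamma_1\whg$, solved by $V\whg\in T^\prime_0$ with $N^\prime$-component $w_1\whg$; on $\whJ(N)\subseteq\ker(\Gamma_1-\beta\Gamma_0)$, using $\Gamma\whf=(\Gamma_0\whf,\beta\Gamma_0\whf)$, the $\whJ^\prime(N^\prime)$- and $N^\prime$-components are forced to $\Gamma^{\prime(-1)}_0\Gamma_0\whf=\whJ^\prime w_0\whJ\whf$ and $w_{01}\whf$ respectively; on $\whJ(T)\not\subseteq\dom\Gamma$ the last row is unconstrained. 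Since $w_0$, $w_1$ are homeomorphisms by Lemma~\ref{lem:Vos}(3), the upper-triangular structure makes $V$ injective iff $\tau$ and the free $(4,4)$-entry are both injective, which can be arranged precisely when $\tau$ is a bijection.

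For Part~2, a standard unitary $V\in\cV(\Gamma,\Gamma^\prime)$ is in particular an operator (so Part~1 applies), and \eqref{eq:Vform} shows $V(T_0)\subseteq T^\prime_0$; bijectivity together with hyper-maximal neutrality of $T^\prime_0$ forces $V(T_0)=T^\prime_0$. Rearranging the blocks to the decomposition $\fK=\whJ(T_0)\op T_0$, the form \eqref{eq:Vform} becomes lower-triangular $V=\bigl(\begin{smallmatrix}B^{-1} & 0 \\ A & V_{22}\end{smallmatrix}\bigr)$ with $B\in\cB(\whJ^\prime(T^\prime_0),\whJ(T_0))$ a homeomorphism. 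The unitarity $V^+V=I$ with $V^+=\whJ V^*\whJ^\prime$, combined with $\whJ$ (resp.\ $\whJ^\prime$) swapping $T_0\leftrightarrow\whJ(T_0)$ (resp.\ primed), forces $V_{22}=\whJ^\prime B^*\whJ$ and $A=\img\whJ^\prime EB^{-1}$ for a bounded everywhere-defined self-adjoint $E$ on $(\whJ^\prime(T^\prime_0),[\cdot,\whJ^\prime\cdot]^\prime)$, yielding \eqref{eq:stVform}. Refining further to $\whJ(T_0)=\whJ(T)\op\whJ(N)$ (likewise on the primed side), \eqref{eq:Vform} shows the $\whJ(N)\lto\whJ^\prime(N^\prime)$ block of $B^{-1}$ is $\whJ^\prime w_0\whJ$ and the $\whJ(N)\lto\whJ^\prime(T^\prime)$ block vanishes, so $B$ is upper-triangular with $(2,2)$-entry $\whJ w_0^{-1}\whJ^\prime$; using $V_{22}=\whJ^\prime B^*\whJ$ and $V_{22}\vrt_T=\tau$ identifies its $(1,1)$-entry as $\whJ\tau^*\whJ^\prime$, a homeomorphism precisely when $T\cong T^\prime$ via $\tau$. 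This yields \eqref{eq:Bform} and the ``$T\cong T^\prime$'' equivalence.

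The decomposition $E=E_0\hop\Theta$ follows by matching the $N^\prime$-row of $A=\img\whJ^\prime EB^{-1}$ against the $w_{01}$-entry in \eqref{eq:Vform}: using the $(2,2)$-block of $B$ one recovers $E\vrt_{\whJ^\prime(N^\prime)}=E_0$ in the displayed form, and the Green identity applied on $\whJ^\prime(N^\prime)\subseteq T^{\prime\,+}$ gives symmetry of $E_0$ in $(\whJ^\prime(T^\prime_0),[\cdot,\whJ^\prime\cdot]^\prime)$; the action of $E$ on the Hilbert-orthogonal complement $\whJ^\prime(T^\prime)$ is free modulo self-adjointness, giving the parameter $\Theta$. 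Conversely, any choice of homeomorphism $\tau$ (hence $B$) and self-adjoint $\Theta$ reconstructs a standard unitary $V\in\cV(\Gamma,\Gamma^\prime)$. The main anticipated difficulty is the careful bookkeeping of $\whJ$, $\whJ^\prime$ under the block rearrangements needed to derive \eqref{eq:Bform} from the unitarity constraint and to pin down the precise form of $E_0$.
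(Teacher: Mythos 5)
Your proposal is correct and follows essentially the same route as the paper: the four-fold orthogonal decomposition $\fK=T\hop N\hop\whJ(N)\hop\whJ(T)$, Lemma~\ref{lem:Vos} for the entries $w_0$, $w_1$, $w_{01}$, and the block analysis of the unitarity constraints to extract $B$ and $E$. The only notable (and harmless) deviations are that you verify $\Gamma^\prime=\Gamma V^{-1}$ by direct block-by-block computation and obtain $E=E_0\hop\Theta$ from the invariance of $\whJ^\prime(N^\prime)$ under $E$, whereas the paper invokes the abstract criterion of Lemma~\ref{lem:e} (Eq.~\eqref{eq:V0V*3}) and a short boundary-triple argument for $E_0^*$.
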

\begin{rems}
\begin{SL}
\item[1.]
The asterisk in the matrix
representation of $V$ designates
the operator which should be clear
from a general definition of the entry of
a matrix, and which is of no importance in
the present context
(\ie it is not specific for particular
boundary triples).
Recall once more that isomorphisms (homeomorphisms)
refer to Hilbert spaces with scalar products
induced via fundamental symmetries by the
corresponding indefinite inner products.
\item[2.]
In \cite{Popovici13,Sandovici13} the equation
$\Gamma^\prime=\Gamma V^{-1}$
is considered in a more general setting,
namely, $\Gamma$ and $\Gamma^\prime$ need not
be single-valued, no connection with boundary
triples is assumed,
and ``$\subseteq$'' is allowed in place of ``$=$''.
Particularly,
the existence of an operator
solution $V$ to $\Gamma^{\prime\,-1}=V\Gamma^{-1}$
is shown in \cite[Theorem~2]{Sandovici13}:
There is an operator $V$ iff there is a
surjection from a subspace of $T$ onto $T^\prime$.
In \cite[Theorem~4.2, Eq.~(7)]{Popovici13}
one finds an operator solution to
$\Gamma^{\prime\,-1}\subseteq V\Gamma^{-1}$ given in
terms of a Hamel basis of $T^{\prime\,+}$.
As it is shown in the sufficiency part of the proof
of Theorem~\ref{thm:l}-1),
there is an operator $V\in\cV(\Gamma,\Gamma^\prime)$
with $\ker V=\ker\tau$, provided $\tau\co T\lto T^\prime$
is surjective but not necessarily bijective;
\cf
\cite[Corollary~5.7, Theorem~5.8]{Popovici13}
for this case, where one considers
$\Gamma^{\prime\,-1}\subseteq V\Gamma^{-1}$
instead of
$\Gamma^{\prime\,-1}=V\Gamma^{-1}$.
\item[3.]
Suppose a Krein space $\fH^\prime\equiv\fH$.
Then a standard unitary operator $V$ in
\eqref{eq:stVform}
is the product of so-called in \cite{Wietsma12}
archetypical relations:
\[
V=
\begin{pmatrix}
B^{-1} & 0 \\
\img \whJ EB^{-1} & \whJ B^*\whJ
\end{pmatrix}=
\begin{pmatrix}
B^{-1} & 0 \\
0 & \whJ B^*\whJ
\end{pmatrix}
\begin{pmatrix}
I & 0 \\
\img \whJ B^{*\,-1}EB^{-1} & I
\end{pmatrix}\,.
\]
The above $V$ is the product of two
standard unitary operators iff
$B$ and $B^{-1}$ are bounded everywhere
defined operators
(\cite[Proposition~4.9]{Wietsma12})
and $E$ is a bounded self-adjoint
operator
(\cite[Proposition~4.8]{Wietsma12}).
In a more general setting the products
of archetypical relations
are characterized in
\cite[Proposition~7.3, Corollary~7.4]{Wietsma12}.
\end{SL}
\end{rems}
It is instructive that we begin our analysis
with a slightly more general situation.
Consider two relations
$\Gamma\co\fK\lto\fK_\circ$ and
$\Gamma^\prime\co\fK^\prime\lto\fK_\circ$
such that
\begin{equation}
\ran\Gamma^\prime\subseteq\ran\Gamma
\quad \text{and}\quad
\mul\Gamma\subseteq\mul\Gamma^\prime\,.
\label{eq:su}
\end{equation}
We define
\begin{defn}
$\cV(\Gamma,\Gamma^\prime)$ is the
set of all relations
$V\co\fK\lto\fK^\prime$ that solve the equation
$\Gamma^\prime=\Gamma V^{-1}$.
\end{defn}
In order
to find operator solutions in
$\cV(\Gamma,\Gamma^\prime)$ in the case of
boundary triples (\ie when
$\Gamma$ and $\Gamma^\prime$
are unitary and surjective),
first we give necessary and sufficient
conditions that a relation
$V\co\fK\lto\fK^\prime$ should
be in $\cV(\Gamma,\Gamma^\prime)$
for $\Gamma$, $\Gamma^\prime$ as in \eqref{eq:su}.

Thus, let $\Gamma$, $\Gamma^\prime$ be
as in \eqref{eq:su}.
By \cite[Corollary~2.4]{Popovici13}
\[
\Gamma^\prime=\Gamma V^{-1}_0\,,\quad
V_0\dfn\Gamma^{\prime\,-1}\Gamma
\]
so that $\cV(\Gamma,\Gamma^\prime)$ is
nonempty. The basic characteristics
of $V_0$ are
\[
\dom V_0=\Gamma^{-1}(\ran\Gamma^\prime)\subseteq
A_*\,,\quad
\ker V_0=\Gamma^{-1}(\mul\Gamma^\prime)\supseteq
S\,,
\]
\[
\ran V_0=A^\prime_*\,,\quad
\mul V_0=S^\prime
\]
where
\[
\begin{split}
A_*\dfn&\dom\Gamma\,,\quad
S\dfn\ker\Gamma\,,
\\
A^\prime_*\dfn&\dom\Gamma^\prime\,,\quad
S^\prime\dfn\ker\Gamma^\prime\,.
\end{split}
\]
In particular
\begin{equation}
\begin{split}
\ran\Gamma=&\ran\Gamma^\prime\quad
\Leftrightarrow\quad
\dom V_0=A_*\,,
\\
\mul\Gamma=&\mul\Gamma^\prime\quad
\Leftrightarrow\quad
\ker V_0=S\,.
\end{split}
\label{eq:eq}
\end{equation}

If $V\in\cV(\Gamma,\Gamma^\prime)$ then
\[
A^\prime_*=V(A_*)\,,\quad S^\prime=V(S)\,,
\]
\[
\ran\Gamma^\prime=\Gamma(\dom V)\,,\quad
\mul\Gamma^\prime=\Gamma(\ker V)\,.
\]
It follows that:
\begin{enumerate}[label=\arabic*$^\circ$,
ref=\arabic*$^\circ$]
\item\label{item:s2}
There is one (and then all)
$V\in\cV(\Gamma,\Gamma^\prime)$
such that $\dom V\supseteq A_*$ iff
$\ran\Gamma=\ran\Gamma^\prime$.
\item\label{item:s1}
There is one (and then all)
$V\in\cV(\Gamma,\Gamma^\prime)$
such that $A_*\cap\ker V=S\cap\ker V$ iff
$\mul\Gamma=\mul\Gamma^\prime$.
\end{enumerate}
Particularly this shows that, if one looks for
a standard unitary operator $V$,
then it can happen iff
$\mul\Gamma=\mul\Gamma^\prime$ and
$\ran\Gamma=\ran\Gamma^\prime$.
\begin{lem}\label{lem:e}
Let $\Gamma$, $\Gamma^\prime$ be
as in \eqref{eq:su}.
\begin{SL}
\item[1)]
In order that a relation
$V\co\fK\lto\fK^\prime$ should
be in $\cV(\Gamma,\Gamma^\prime)$ it is
sufficient---and in case
$S\subseteq\dom V$ also necessary---that
\begin{subequations}\label{eq:V0V*-12}
\begin{equation}
V_0=V_*\hsum(\{0\}\times V(S))\,,
\quad
V_*\dfn V\vrt_{A_*}
\label{eq:V0V*}
\end{equation}
or, equivalently
\begin{equation}
\ran(V_0-V)=V(S)=S^\prime\,,\quad
\dom V_0=\dom V_*\,.
\label{eq:V0V*2}
\end{equation}
\end{subequations}
\item[2)]
Suppose in addition that $V_0$ is closed
(so that then $S^\prime$ is necessarily closed).
Let $(V_0)_s$ be the operator part of $V_0$.
Then \eqref{eq:V0V*-12} is equivalent to
\begin{equation}
\ran((V_0)_s-V)\subseteq V(S)=S^\prime\,,\quad
\dom (V_0)_s=\dom V_*\,.
\label{eq:V0V*3}
\end{equation}
\end{SL}
\end{lem}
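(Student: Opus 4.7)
The plan is to split the argument into three steps: (i) establish the equivalence of the two descriptions \eqref{eq:V0V*} and \eqref{eq:V0V*2} as purely algebraic identities about $V_0$; (ii) prove sufficiency via a direct verification that $\Gamma V_0^{-1} = \Gamma V^{-1}$; (iii) prove necessity under $S\subseteq\dom V$ by computing $V_0 = \Gamma^{\prime-1}\Gamma$ explicitly from $\Gamma^\prime = \Gamma V^{-1}$. Part 2) will then drop out of Part 1) after invoking the canonical decomposition of the closed relation $V_0$ into its operator part and its multivalued part.

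First I would extract the preliminary identification $\mul V_0 = S^\prime$, which follows from \eqref{eq:su}: the inclusion $S^\prime\subseteq\mul V_0$ uses $0\in\mul\Gamma$, while the reverse inclusion uses $\mul\Gamma\subseteq\mul\Gamma^\prime$. Granting this, I would establish \eqref{eq:V0V*}~$\Leftrightarrow$~\eqref{eq:V0V*2} through three short computations: $\dom V_0 = \dom V_*$ is immediate from the definition of $\hsum$; the inclusion $\ran(V_0 - V)\supseteq V(S)$ follows by taking $x=0$; and the reverse inclusion together with $V(S) = S^\prime$ follows from the fact that any two elements of $V_*(x)$ differ by an element of $\mul V_*\subseteq V(S)$, combined with $\mul V_0 = \mul V_* + V(S)$. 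For the converse implication, I would write an arbitrary $(x,y)\in V_0$ as $(x,y^\prime) + (0, y-y^\prime)$ with $y^\prime\in V_*(x)$, using $\dom V_0 = \dom V_*$ to guarantee existence of such $y^\prime$ and the range identity to place $y-y^\prime\in V(S)$.

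Sufficiency then becomes a short verification: $\Gamma V^{-1}\subseteq\Gamma V_0^{-1}$ uses $V_*\subseteq V_0$ (contained in \eqref{eq:V0V*}) together with $\dom\Gamma = A_*$; the opposite inclusion decomposes $y = y_1+y_2$ according to \eqref{eq:V0V*}, selects $s\in S$ with $(s,y_2)\in V$, and exploits linearity of $V$ and $\Gamma$ to produce $(x+s,y)\in V$ and $(x+s,l)\in\Gamma$. For necessity, unwinding the definition of $V_0$ from $\Gamma^\prime = \Gamma V^{-1}$ would give that $(x,y)\in V_0$ precisely when $x\in A_*$ and there exists $x^\prime\in\dom V$ with $x-x^\prime\in S$ and $y\in V(x^\prime)$; the hypothesis $S\subseteq\dom V$ then lets me rewrite this as $x\in A_*\cap\dom V$ and $y\in V(x) + V(S)$, via the linear-relation identity $V(x-s) = V(x) - V(s)$ for $x,s\in\dom V$.

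For Part 2), the canonical decomposition $V_0 = (V_0)_s\hsum(\{0\}\times\mul V_0)$ of a closed relation, combined with $\mul V_0 = S^\prime$ and $\dom(V_0)_s = \dom V_0 = \dom V_*\subseteq\dom V$, will yield the identity $\ran(V_0 - V) = \ran((V_0)_s - V) + S^\prime$. The range condition in \eqref{eq:V0V*2} then collapses to $\ran((V_0)_s - V)\subseteq S^\prime = V(S)$, giving the desired equivalence. The main obstacle throughout will be the bookkeeping of multivalued parts; once the identifications $\mul V_0 = S^\prime = V(S)$ and the algebra of $V(x-s) = V(x) - V(s)$ are secured, all remaining verifications reduce to routine subspace manipulations in $\fK\times\fK^\prime$.
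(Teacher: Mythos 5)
Your plan reproduces the paper's argument essentially step for step: the same identification $\mul V_0=V(S)=S^\prime$ (stated in the paper just before the lemma), the same verification that $\Gamma V_0^{-1}=\Gamma V^{-1}$ for sufficiency, the same unwinding of $V_0=V\Gamma^{-1}\Gamma$ under $S\subseteq\dom V$ (your $x^\prime=x-s$ is the paper's $\whf+\whg$) for necessity, and the same canonical decomposition $V_0=(V_0)_s\hop(\{0\}\times S^\prime)$ for part 2). The only item you leave implicit is the inclusion $V_*\hsum(\{0\}\times V(S))\subseteq V_0$ in the implication \eqref{eq:V0V*2}$\Rightarrow$\eqref{eq:V0V*}, but this follows at once from $\dom V_*=\dom V_0$ and $\ran(V_0-V)=V(S)=S^\prime=\mul V_0$, exactly in the spirit of the computations you do spell out.
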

\begin{proof}
1) Sufficiency:
If $V$ satisfies
\eqref{eq:V0V*} then
\begin{align*}
\Gamma^\prime=&\Gamma V^{-1}_0=
\Gamma(V_*\hsum(\{0\}\times V(S)))^{-1}
\\
=&\Gamma V^{-1}\hsum(V(S)\times\{0\})\,.
\end{align*}
But $\ker(\Gamma V^{-1})=V(S)$, so
$\Gamma^\prime=\Gamma V^{-1}$, \ie
$V\in\cV(\Gamma,\Gamma^\prime)$.

Necessity:
Let $V\in\cV(\Gamma,\Gamma^\prime)$,
$S\subseteq\dom V$.
Since $V_0=V\Gamma^{-1}\Gamma$, \ie
$V_0$ consists of
$(\whf,\whh)$ such that
$(\exists\whg\in S)$
$(\whf+\whg,\whh)\in V_*$,
it follows that
$(\exists\whh_\circ)$ $(\whg,\whh_\circ)\in V\vrt_S$,
so $(\whf,\whh-\whh_\circ)\in V_*$,
\ie $(\whf,\whh)\in V_*\hsum
(\{0\}\times V(S))$. Conversely,
if $(\whf,\whh)\in V_*\hsum
(\{0\}\times V(S))$, \ie $\whh=\whh_*+\whh_\circ$
with
$(\whf,\whh_*)\in V_*$ and
$\whh_\circ\in V(S)$, then
$(\exists\whg\in S)$ $(\whg,\whh_\circ)\in V\vrt_S$;
hence $(\whf+\whg,\whh)\in V_*$, and then
by the above $(\whf,\whh)\in V\Gamma^{-1}\Gamma=V_0$.

Equivalence:
\eqref{eq:V0V*} $\Rightarrow$ \eqref{eq:V0V*2}
uses
$V_*\subseteq V_0$ and $\mul V_0=S^\prime=V(S)$.
To see \eqref{eq:V0V*} $\Leftarrow$ \eqref{eq:V0V*2},
consider $(\whf,\whh_0)\in V_0$. Since,
$\dom V_0=\dom V_*$,
$(\exists\whh_*)$ $(\whf,\whh_*)\in V_*$.
Since $\ran(V_0-V_*)=V(S)$,
$\whh_0-\whh_*\in V(S)$, and this shows
$V_0\subseteq V_*\hsum(\{0\}\times V(S))$.
Similarly one concludes that
$V_*\subseteq V_0\hsum(\{0\}\times V(S))$;
because $V(S)=S^\prime$, \eqref{eq:V0V*} follows.

2) Let $V_0$ be closed; then $S^\prime=\mul V_0$
is closed. Then
\eqref{eq:V0V*2} $\Leftrightarrow$ \eqref{eq:V0V*3}
follows from the canonical decomposition
$V_0=(V_0)_s\hop(\{0\}\times S^\prime)$ and
by noting that
$\dom V_0=\dom(V_0)_s$ and
\[
\ran(V_0-V)=
\ran((V_0)_s-V)\hsum S^\prime\,.\qedhere
\]
\end{proof}
It follows that every
$V\in\cV(\Gamma,\Gamma^\prime)$,
with $S\subseteq\dom V$, satisfies
\[
\dom V_0=A_*\cap\dom V\,,\quad
\ker V_0=S\hsum(A_*\cap\ker V)\,.
\]
In view of \eqref{eq:eq} the above formulas
are in agreement with \ref{item:s2}, \ref{item:s1}.
\begin{proof}[Proof of Theorem~\ref{thm:l}]
1) Necessity:
Let $V\in\cV(\Gamma,\Gamma^\prime)$.
Since $\ran\Gamma=\ran\Gamma^\prime$,
$\dom V\supseteq A_*=T^+$ by \ref{item:s2}.
If $V$ is an operator then \eqref{eq:Vform}
follows from
Lemma~\ref{lem:e}, Eq.~\eqref{eq:V0V*3},
and Lemma~\ref{lem:Vos};
because
$V(T)=T^\prime$, there is
a surjection $\tau\dfn V\vrt_T$.
If, in addition, $V$ is injective then
$\tau$ is bijective.

Sufficiency:
Suppose there is a surjection $\tau\co T\lto T^\prime$
and consider an operator
$V\co\fK\lto\fK^\prime$
defined by $V\dfn(V_0)_s+L$, $L\dfn\tau P_T\vrt_{T^+}$;
that is by \eqref{eq:Vform} with
$\dom V=T^+$ and all $*\equiv0$.
Then $V\vrt_T=\tau$ and since
\[
\ran(V-(V_0)_s)=\ran L=\tau(T)=T^\prime
\]
by Lemma~\ref{lem:e} therefore
$V\in\cV(\Gamma,\Gamma^\prime)$; if, in addition,
$\tau$ is bijective then, since
$\ker V=\ker\tau$,
the above defined operator $V$ is injective.

2) Necessity:
Let $V\in\cV(\Gamma,\Gamma^\prime)$
be a standard unitary operator.
By 1)
$\tau\dfn V\vrt_T$ is a homeomorphism
$T\lto T^\prime$
and $V$ is of the form \eqref{eq:Vform},
which we rewrite thus
\[
V=\begin{pmatrix}
\wtV_{22} & 0 \\
\wtV_{12} & \wtV_{11}
\end{pmatrix}\co
\begin{matrix}
\whJ(T_0) \\ \hop \\ T_0
\end{matrix}\lto
\begin{matrix}
\whJ^{\prime}(T^\prime_0) \\ \hop \\
T^\prime_0
\end{matrix}
\]
with
\[
\wtV_{11}\dfn
\begin{pmatrix}
\tau & * \\ 0 & w_1
\end{pmatrix}\co
\begin{matrix}
T \\ \hop \\ N
\end{matrix}\lto
\begin{matrix}
T^\prime \\ \hop \\ N^\prime
\end{matrix}\,,
\quad
\wtV_{12}\dfn
\begin{pmatrix}
* & * \\ * & w_{01}
\end{pmatrix}\co
\begin{matrix}
\whJ(T) \\ \hop \\ \whJ(N)
\end{matrix}\lto
\begin{matrix}
T^\prime \\ \hop \\ N^\prime
\end{matrix}\,,
\]
\[
\wtV_{22}\dfn
\begin{pmatrix}
* & 0 \\ * & \whJ^\prime w_0\whJ
\end{pmatrix}\co
\begin{matrix}
\whJ(T) \\ \hop \\ \whJ(N)
\end{matrix}\lto
\begin{matrix}
\whJ^\prime(T^\prime) \\ \hop \\
\whJ^\prime(N^\prime)
\end{matrix}\,.
\]
We claim that
$\wtV_{22}\in\cB(\whJ(T_0),
\whJ^\prime(T^\prime_0))$ is bijective.
Since $V$ is surjective and
\[
\ran V=\ran\wtV_{22}\hop T^\prime_0
\]
it holds $\ran\wtV_{22}=\whJ^\prime(T^\prime_0)$.
Next, the equality
\[
[V(\whf+\whu),V(\whg+\whv)]^\prime=
[\whf+\whu,\whg+\whv]\;(=
[\whf,\whv]+[\whu,\whg])
\]
valid for all $\whf$, $\whg\in T_0$
and $\whu$, $\whv\in\whJ(T_0)$ holds true iff
\begin{subequations}
\begin{align}
0=&[\wtV_{22}\whu,
\wtV_{11}\whg]^\prime-[\whu,\whg]
\quad\text{and}
\label{eq:e-1} \\
0=&[\wtV_{22}\whu,\wtV_{12}\whv]^\prime+
[\wtV_{12}\whu,\wtV_{22}\whv]^\prime
\label{eq:e-2}
\end{align}
\end{subequations}
for all $\whu$, $\whv$, $\whg$ as above.
The first equation~\eqref{eq:e-1} implies that
\begin{equation}
\whJ (\wtV_{22})^*\whJ^\prime
\wtV_{11}=I
\quad\text{on}\quad T_0\,.
\label{eq:1st}
\end{equation}
Because $\ran\wtV_{11}=T^\prime_0$,
this shows $\ran(\wtV_{22})^*=\whJ(T_0)$;
hence $\wtV_{22}$ is injective, and then
bijective as claimed.

Letting
\[
\wtV_{22}\equiv B^{-1}
\]
with a homeomorphism $B\in\cB(\whJ^\prime(T^\prime_0),
\whJ(T_0))$,
equation~\eqref{eq:1st} gives
\[
\wtV_{11}=\whJ^\prime B^*\whJ\quad
\text{and then}\quad
B\vrt_{\whJ^\prime(T^\prime)}=
\whJ\tau^*\whJ^\prime
\]
where the second equality also uses
Lemma~\ref{lem:Vos}-3); hence $B$
is of the form \eqref{eq:Bform}.

Letting
\[
\wtV_{12}\equiv\img\whJ^\prime EB^{-1}
\]
for some $E\in\cB(\whJ^\prime(T^\prime_0))$,
one sees that $E$ extends
$E_0=-\img\whJ^\prime w_{01}\whJ w^{-1}_0\whJ^\prime$.
That a closed $E_0$ is symmetric in
$(\whJ^\prime(T^\prime_0),
[\cdot,\whJ^\prime\cdot]^\prime)$
follows from \eqref{eq:V0sun} with
$\whf$, $\whg\in\whJ(N)$. That $E$ is
a self-adjoint operator in
$(\whJ^\prime(T^\prime_0),
[\cdot,\whJ^\prime\cdot]^\prime)$
follows from the second equation~\eqref{eq:e-2}.
We prove that $E\supseteq E_0$
is of the form \eqref{eq:E}.

For simplicity, define the Hilbert spaces
\[
\fU\dfn(\whJ^\prime(T^\prime),
[\cdot,\whJ^\prime\cdot]^\prime)\,,\quad
\fV\dfn(\whJ^\prime(T^\prime_0),
[\cdot,\whJ^\prime\cdot]^\prime)\,,
\]
\[
\fW\dfn(\whJ^\prime(N^\prime),
[\cdot,\whJ^\prime\cdot]^\prime)
\]
The operator $E_0$ is self-adjoint
as an operator from $\cB(\fW)$,
thus the adjoint in
$\fV$ of $E_0$ is given by
\[
E^*_0=E_0\hop\fU^2\,.
\]
Evidently the triple
$\Pi_{\hat{\Gamma}}=
(\fU,\hat{\Gamma}_0,\hat{\Gamma}_1)$
defined by
\[
\hat{\Gamma}_0(\whh+\whu,E_0\whh+\whv)\dfn
\whu\,,\quad
\hat{\Gamma}_1(\whh+\whu,E_0\whh+\whv)\dfn
\whv
\]
for $\whh\in\fW$ and
$\whu$, $\whv\in\fU$, is a boundary triple
for $E^*_0$. Subsequently,
a closed extension $\wtE\in\Ext(E_0)$
is in 1-1 correspondence with a closed
relation $\Theta$ in $\fU$ via
\[
\wtE=E_\Theta\dfn\hat{\Gamma}^{-1}(\Theta)=
E_0\hop\Theta\,.
\]
In particular $E_\Theta$ is self-adjoint
in $\fV$ iff so $\Theta$ in $\fU$.
This shows that $E=E_\Theta$
with $\Theta=\Theta^*\in\cB(\fU)$.

Sufficiency:
If there is a homeomorphism
$\tau\co T\lto T^\prime$,
then an operator $V\co\fK\lto\fK^\prime$
defined by \eqref{eq:stVform} is
a standard unitary operator that
satisfies \eqref{eq:V0V*3}, \ie
$V\in\cV(\Gamma,\Gamma^\prime)$.
\end{proof}
\begin{exam}\label{exam:sgm}
If $V\in\cV(\Gamma,\Gamma^\prime)$
then also
$V_*\dfn V\vrt_{T^+}\in\cV(\Gamma,\Gamma^\prime)$.
In particular,
for a standard unitary operator
$V\in\cV(\Gamma,\Gamma^\prime)$
\begin{align*}
V_*=&
\begin{pmatrix}
\tau & \sigma & 0 \\ 0 & w_1 & w_{01} \\
0 & 0 & \whJ^\prime w_0\whJ
\end{pmatrix}\co
\begin{matrix}
T \\ \hop \\ N \\ \hop \\ \whJ(N)
\end{matrix}\lto
\begin{matrix}
T^\prime \\ \hop \\ N^\prime \\ \hop \\
\whJ^\prime(N^\prime)
\end{matrix}
\\
=&\tau P_T+\sigma P_N+(V_0)_s\quad
\text{on}\quad T^+\,.
\end{align*}
Here $\tau\in\cB(T,T^\prime)$ is a
homeomorphism and
$\sigma\in\cB(N,T^\prime)$ arises from \eqref{eq:Bform},
where an unspecified operator, indicated by $*$,
is given by $\whJ\sigma^*\whJ^\prime$.
The inverse of $V_*$ can be represented by
\[
V^{-1}_*=\tau^{-1}P_{T^\prime}+
\Gamma^{(-1)}_0\Gamma^\prime_0
+(I-\tau^{-1}\sigma)\Gamma^{(-1)}_1
(\Gamma^\prime_1-\beta\Gamma^\prime_0)
\]
on $T^{\prime\,+}$.
\end{exam}
\begin{exam}
If $\fH^\prime\equiv\fH$, $T^\prime\equiv T$,
$\Gamma^\prime\equiv\Gamma$, a
standard unitary operator
$V$ from $\cV(\Gamma,\Gamma)$ reads
\[
V=\begin{pmatrix}
\tau & \sigma & 0 & \img\whJ\Theta
\tau_\natural \\ 0 & I & 0 & 0 \\
0 & 0 & I & -(\tau^{-1}\sigma)^+ \\
0 & 0 & 0 & \tau_\natural
\end{pmatrix}\co
\begin{matrix}
T \\ \hop \\ N \\ \hop \\ \whJ(N) \\
\hop \\ \whJ(T)
\end{matrix}\lto
\begin{matrix}
T \\ \hop \\ N \\ \hop \\ \whJ(N) \\
\hop \\ \whJ(T)
\end{matrix}\,,
\]
\[
\tau_\natural\dfn\whJ\tau^{*\,-1}\whJ\,,
\quad\tau\in\cB(T)\;\text{bijective}\,,
\quad
\Theta=\Theta^*\in\cB(\whJ(T))
\]
and $\sigma\in\cB(N,T)$
is as in Example~\ref{exam:sgm}.
\end{exam}
%%%%%%%%%%%%%%%%%%%%%%%%%%%%%%%%%%%%%%%%%%%%%%%%%%%%%%%%%%%%%%
%%%%%%%%%%%%%%%%%%%%%%%%%%%%%%%%%%%%%%%%%%%%%%%%%%%%%%%%%%%%%%
%%%%%%%%%%%%%%%%%%%%%%%%%%%%%%%%%%%%%%%%%%%%%%%%%%%%%%%%%%%%%%
%%%%%%%%%%%%%%%%%%%%%%%%%%%%%%%%%%%%%%%%%%%%%%%%%%%%%%%%%%%%%%
\section{Similarity of boundary triples
of symmetric operators}\label{sec:Weyl}
%%%%%%%%%%%%%%%%%%%%%%%%%%%%%%%%%%%%%%%%%%%%%%%%%%%%%%%%%%%%%%
%%%%%%%%%%%%%%%%%%%%%%%%%%%%%%%%%%%%%%%%%%%%%%%%%%%%%%%%%%%%%%
%%%%%%%%%%%%%%%%%%%%%%%%%%%%%%%%%%%%%%%%%%%%%%%%%%%%%%%%%%%%%%
%%%%%%%%%%%%%%%%%%%%%%%%%%%%%%%%%%%%%%%%%%%%%%%%%%%%%%%%%%%%%%
Given $T$ and $T^\prime$ as in Theorem~\ref{thm:l},
assume additionally that both $T$ and $T^\prime$
are densely defined and that
$V\in\cV(\Gamma,\Gamma^\prime)$
is a standard unitary operator, \ie
it is of the form
\begin{subequations}\label{eq:VABCD}
\begin{equation}
V=\begin{pmatrix}
A & B \\ C & D \end{pmatrix}\co
\begin{matrix}
\fH \\ \op \\ \fH
\end{matrix}\lto
\begin{matrix}
\fH^\prime \\ \op \\ \fH^\prime
\end{matrix}\,.
\end{equation}
This means operators
$A$, $B$, $C$, $D\in\cB(\fH,\fH^\prime)$
satisfy
\begin{equation}
\begin{split}
A^+D-C^+B=I\,,\quad
AD^+-BC^+=I^\prime\,,
\\
A^+C=C^+A\,,\quad AB^+=BA^+\,,
\\
B^+D=D^+B\,,\quad CD^+=DC^+\,.
\end{split}
\end{equation}
\end{subequations}
Then \cite{Jursenas21a}
the Weyl functions corresponding to
boundary triples $\Pi_\Gamma$ and
$\Pi_{\Gamma^\prime}$ coincide on
$\rho(T_0)\cap\rho(T^\prime_0)$
(provided this set is nonempty) iff
\[
\fN_z(T^+)\subseteq\ker p_V(z)
\]
where the quadratic pencil
\[
p_V(z)\dfn z^2B+z(A-D)-C\,.
\]
In the subsequent lemma we generalize this
result to the case of isometric boundary pairs.
It turns out that even in the special case
of boundary triples the restriction
$z\in\rho(T_0)\cap\rho(T^\prime_0)$
can be relaxed.

The following definition is a generalization
of \cite[Definitions~3.1, 3.6]{Behrndt11};
see also \cite{Derkach17,Derkach06} and citation
therein for a Hilbert space setting.
\begin{defn}
Let $T$ be a closed symmetric relation
in a Krein space $\fH$. The pair
$(\fL,\Gamma)$, where $\fL=(\fL,\braket{\cdot,\cdot})$
is a Hilbert space, is said to be an
\textit{isometric} (resp. \textit{unitary})
\textit{boundary pair} for
$T^+$, if the relation $\Gamma\co\fK\lto\fK_\circ$
is isometric (resp. unitary) and
$\dom\Gamma$ is dense in $T^+$. A unitary
$\Gamma$ is also referred to as a boundary
relation for $T^+$.
\end{defn}
The Weyl family $M_\Gamma$
corresponding to an isometric boundary pair
$(\fL,\Gamma)$ is defined as in the case
of a boundary triple:
$\bbC\ni z\mapsto M_\Gamma(z)\dfn\Gamma(zI)$.
\begin{rem}
If $\Gamma$ is a surjective boundary relation for
$T^+$, then a unitary boundary
pair $(\fL,\Gamma)$ reduces to a boundary
triple $\Pi_\Gamma=(\fL,\Gamma_0,\Gamma_1)$.
Unlike a boundary
triple, an isometric boundary pair always exists.
\end{rem}
Below,
$S\dfn\ker\Gamma$ $(\subseteq T=\mul\Gamma^+)$
and
$r(T)$ is the set of those $z\in\bbC$
such that $\ran(T-zI)$ and $\ran(T-\ol{z}I)$
are subspaces.
\begin{lem}\label{lem:Wl}
Let $T$ and $T^\prime$ be closed symmetric relations
in Krein spaces $\fH$ and $\fH^\prime$, and let
$(\fL,\Gamma)$ and $(\fL,\Gamma^\prime)$
be the isometric boundary pairs
for $T^+$ and $T^{\prime\,+}$,
with the corresponding Weyl families $M_\Gamma$
and $M_{\Gamma^\prime}$.
Suppose $\Gamma^\prime=\Gamma V^{-1}$,
where $V\co\fK\lto\fK^\prime$ is a unitary
relation with a closed domain
$\dom V\supseteq A_*\dfn\dom\Gamma$.
\begin{SL}
\item[1)]
Let $z\in\bbC$. In order that
$M_\Gamma(z)=M_{\Gamma^\prime}(z)$ it
is necessary---and, in case
$S=T$ and $z\in r(T)\neq\emptyset$, also
sufficient---that
\begin{equation}
\fN_z(A_*)\subseteq
\fN_z(S\hsum V^{-1}(zI))\,.
\label{eq:vv2-0}
\end{equation}
\item[2)]
Suppose
$S\cap V^{-1}(zI)$ is trivial for
$z\in\bbC\setm\sigma_p(S)\neq\emptyset$;
in this case
\[
\fN_z(S\hsum
V^{-1}(zI))=\fN_z(V^{-1}(zI))\,.
\]
\end{SL}
\end{lem}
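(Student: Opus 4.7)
Both parts reduce to set-theoretic manipulations of image sets under $\Gamma$. The starting observation is that, since $\Gamma^\prime = \Gamma V^{-1}$, one has
\[
M_{\Gamma^\prime}(z) = \Gamma^\prime(zI) = \Gamma(V^{-1}(zI) \cap A_*),
\]
so the identity $M_\Gamma(z) = M_{\Gamma^\prime}(z)$ is equivalent to the set equality $\Gamma(zI \cap A_*) = \Gamma(V^{-1}(zI) \cap A_*)$ in $\fL^2$. Since $\Gamma$ is a linear relation with $\ker\Gamma = S$, the rule $\Gamma(X) \subseteq \Gamma(Y)$ (for linear $X,Y \subseteq A_*$) holds iff $X \subseteq Y + S$. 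For the necessary direction in 1), the inclusion ``$\subseteq$'' then yields $zI \cap A_* \subseteq V^{-1}(zI) + S$; noting that the componentwise sum $\hsum$ coincides with ordinary set addition in $\fH^2$, this is the same as $\fN_z(A_*) \subseteq \fN_z(S \hsum V^{-1}(zI))$.

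For the sufficient direction under $S = T$ and $z \in r(T)$, one must in addition establish the reverse inclusion $V^{-1}(zI) \cap A_* \subseteq \whfN_z(A_*) + T$. The hypothesis $z \in r(T)$ makes $\ran(T - zI)$ and $\ran(T - \ol{z}I)$ into subspaces, and combined with $S = T$ (so that $A_* = T^+$ effectively, as in an ordinary boundary triple) one obtains a generalized von~Neumann decomposition
\[
A_* = T \hsum \whfN_z(A_*) \hsum \whfN_{\ol{z}}(A_*).
\]
Any $\phi \in V^{-1}(zI) \cap A_*$ decomposes accordingly, and its $\whfN_{\ol{z}}(A_*)$-component must be forced to vanish. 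For this I would pair $\phi$ against test elements drawn from $V^{-1}(\ol{z}I) \cap A_*$ via the Krein-space inner product on $\fK$, and use the Green identity for the isometric pair $(\fL,\Gamma)$ together with the unitarity of $V$ to cancel the cross term. This is the main obstacle: one has to track how $V$ transports the two defect-like subspaces at $z$ and $\ol{z}$, and keep control of the fact that $\dom V \supseteq A_*$ (with possible strict inclusion) is only assumed closed.

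For 2), the inclusion $\fN_z(V^{-1}(zI)) \subseteq \fN_z(S \hsum V^{-1}(zI))$ is immediate by taking the $S$-summand to be $0$. For the converse, write $(f,zf) = \phi_1 + \phi_2$ with $\phi_1 \in S$ and $\phi_2 \in V^{-1}(zI)$; triviality of $S \cap V^{-1}(zI)$ forces uniqueness of this decomposition. The aim is $\phi_1 = 0$. Exploiting uniqueness together with $(f,zf) \in \whfN_z(\fH^2)$, I would argue that $\phi_2$ itself lies in $V^{-1}(zI) \cap \whfN_z(\fH^2)$, so that $\phi_1 = (f,zf) - \phi_2 \in \whfN_z(\fH^2)$ as well. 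Then $\phi_1 \in S \cap \whfN_z(\fH^2) = \whfN_z(S) = \{0\}$ by the hypothesis $z \notin \sigma_p(S)$, and consequently $(f,zf) = \phi_2 \in V^{-1}(zI)$. The delicate step is the implication $\phi_2 \in V^{-1}(zI) \Rightarrow \phi_2 \in \whfN_z(\fH^2)$; I expect this to follow from the linearity of $V^{-1}(zI)$ and the direct-sum structure, which together localize the $z$-eigenspace condition on the total sum to its $V^{-1}(zI)$-summand alone.
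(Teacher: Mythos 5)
Your reduction of $M_\Gamma(z)=M_{\Gamma^\prime}(z)$ to the set identity $\Gamma(zI\cap A_*)=\Gamma(V^{-1}(zI)\cap A_*)$ and the necessity half of 1) are correct and essentially coincide with the paper's argument, which phrases the same thing as $S\hsum\whfN_z(A_*)=S\hsum(A_*\cap V^{-1}(zI))$ and intersects with $zI$. The two steps you yourself flag as open are, however, genuine gaps. For the sufficiency in 1), the decomposition $A_*=T\hsum\whfN_z(A_*)\hsum\whfN_{\ol{z}}(A_*)$ is not available in a Krein space under the hypothesis $z\in r(T)$: closedness of $\ran(T-zI)$ does not give $\fH=\ran(T-zI)\dsum\fN_{\ol{z}}(T^+)$, because a closed subspace of a Krein space may be degenerate and then $\cL+\cL^{[\bot]}\neq\fH$. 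This is exactly where the indefinite setting departs from the Hilbert-space one; compare the example in Section~\ref{sec:obt}, where even a simple operator in $\bbC^4$ needs a generalized von Neumann formula with a third defect subspace. (Also $A_*=\dom\Gamma$ is only dense in $T^+$, so a formula for $T^+$ would not directly descend to $A_*$.) The paper avoids any global decomposition: writing $\whfN^V_z(A_*)\dfn A_*\cap V^{-1}(zI)$, it notes $\whfN^V_z(A_*)^+\supseteq T\hsum V^{-1}(\ol{z}I)$ (isometry of $V$ plus $A_*^{[\bot]}=T$), deduces from \eqref{eq:vv2-0} that $\ran(\whfN^V_z(A_*)-zI)\subseteq\ran(T-zI)$ --- the closedness of $\ran(T-zI)$ is used precisely to identify it with $\fN_{\ol{z}}(T^+)^{[\bot]}$ --- and then concludes $T\hsum\whfN^V_z(A_*)=T\hsum\whfN_z(A_*)$ by comparing ranges and eigenspaces. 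Your plan of pairing against $V^{-1}(\ol{z}I)\cap A_*$ only yields $[\cdot,\cdot]$-orthogonality to that set, which in an indefinite (and possibly degenerate) geometry does not annihilate the putative $\whfN_{\ol{z}}$-component even if the decomposition existed.

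In part 2) the step you call delicate is in fact false: $V^{-1}(zI)$ is the preimage under $V$ of the relation $zI$ in $\fH^\prime$, not a restriction of $zI$ in $\fH$, so $\phi_2\in V^{-1}(zI)$ gives no reason whatsoever for $\phi_2\in\whfN_z(\fH^2)$; linearity of $V^{-1}(zI)$ and directness of the sum do not localize the eigenvalue condition on $(f,zf)$ to its second summand (for a generic direct sum $X\dsum Y$, an element of $zI\cap(X\dsum Y)$ need lie in neither $zI\cap X$ nor $zI\cap Y$). Any correct argument must use $z\notin\sigma_p(S)$ together with the triviality of $S\cap V^{-1}(zI)$ at the level of the decomposition itself, as the paper does, rather than first transporting the eigenvalue equation onto $\phi_2$. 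As written, neither the sufficiency in 1) nor part 2) is established.
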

\begin{rems}
\begin{SL}
\item[1.]
Given an isometric (resp. unitary) boundary pair
$(\fL,\Gamma)$ for $T^+$,
that $(\fL,\Gamma^\prime)$ is an isometric
(resp. unitary) boundary pair for
$T^{\prime\,+}$ is due to \cite[Theorem~6.1]{Jursenas21a}.
The correspondence is 1-1 if
$\ker V\subseteq S$. The above result
also applies to essentially unitary boundary
pairs (\ie such that the closure
$\ol{\Gamma}$ is unitary).
\item[2.]
Let $V$ be injective or, equivalently,
let $V$ be a standard unitary operator. Then
$S\cap V^{-1}(zI)$
is trivial iff $z\in\bbC\setm\sigma_p(S^\prime)$;
here $S^\prime\dfn\ker\Gamma^\prime=V(S)$.
Therefore,
if $S=T$ (so that then also
$S^\prime=T^\prime$) and the set $r_V(T)\dfn r(T)\setm
(\sigma_p(T)\cup\sigma_p(T^\prime))$
is nonempty, then the Weyl families coincide on
$r_V(T)$ iff
\[
\fN_z(A_*)\subseteq\fN_z(V^{-1}(zI))\,.
\]
Moreover, for a standard unitary
operator $V$ of the form \eqref{eq:VABCD},
it holds
\[
\fN_z(V^{-1}(zI))=\ker p_V(z)\,.
\]
If in particular $\Gamma$ is unitary and
$\dom\ran \Gamma$ is closed, then
the symmetric relation
$T_0\dfn\Gamma^{-1}(\{0\}\times\fL)\supseteq
T$ from $\Ext(T)$ is self-adjoint;
see \cite[Proposition~4.5]{Jursenas21a}
(\cf \cite[Corollary~4.17]{Derkach06} in
a Hilbert space setting).
In this case $r_V(T)$ contains
$\rho(T_0)\cap
\rho(T^\prime_0)$, where the
self-adjoint
relation $T^\prime_0=V(T_0)\in\Ext(T^\prime)$.

\item[3.]
In a Hilbert space setting,
the example of an isometric boundary
pair $(\fL,\Gamma)$ for $T^*$ such that
$S=T$, is an $AB$-generalized boundary pair
\cite[Definition~4.1]{Derkach17}, that is,
an isometric boundary pair with $T_0$
self-adjoint and $\ol{\dom}\ran\Gamma=\fL$.
Therefore, if the $AB$-generalized boundary
pairs $(\fL,\Gamma)$ for $T^*$ and $(\fL,\Gamma^\prime)$
for $T^{\prime\,*}$ are associated via a standard
unitary operator $V$ as in Lemma~\ref{lem:Wl},
then their corresponding Weyl functions
coincide on $\bbC_*$ iff
$\fN_z(A_*)\subseteq\ker p_V(z)$.
\end{SL}
\end{rems}
\begin{proof}[Proof of Lemma~\ref{lem:Wl}]
One verifies that
$\Gamma(zI)=\Gamma^\prime(zI)$ iff
\begin{equation}
S\hsum\whfN_z(A_*)=
S\hsum\whfN^V_z(A_*)\,,
\quad
\whfN^V_z(A_*)\dfn A_*\cap V^{-1}(zI)\,.
\label{eq:MMp}
\end{equation}

1) Necessity:
If \eqref{eq:MMp} holds, that is, if
\[
(S\hsum zI)\cap A_*=
(S\hsum V^{-1}(zI))\cap A_*
\]
then
\[
(S\hsum zI)\cap A_*\cap zI=
(S\hsum V^{-1}(zI))\cap A_*\cap zI
\]
\ie \eqref{eq:vv2-0} holds.

Sufficiency:
Assume $S=T$
and that \eqref{eq:vv2-0} holds for
$z\in r(T)$; note that
$z\in r(T)$ iff $\ol{z}\in r(T)$.
Because the Krein space adjoint
\[
\whfN^V_z(A_*)^+\supseteq
T\hsum V^{-1}(\ol{z}I)
\]
it follows from \eqref{eq:vv2-0} that
\begin{equation}
\ran(T-zI)\supseteq
\ran(\whfN^V_z(A_*)-zI)\,.
\label{eq:x1}
\end{equation}
Define
\[
R\dfn T\hsum\whfN_z(A_*)\,,\quad
R^V\dfn T\hsum\whfN^V_z(A_*)\,.
\]
Then
\[
\ran(R-zI)=\ran(T-zI)=\ran(R^V-zI)
\]
where the last equality uses \eqref{eq:x1}.
It follows in particular that
\[
R^V=T\hsum\whfN_z(R^V)\,.
\]
On the other hand \eqref{eq:vv2-0} equivalently
reads
\[
\fN_z(A_*)=\fN_z(T\hsum\whfN^V_z(A_*))=
\fN_z(R^V)
\]
and hence $R^V=R$. By \eqref{eq:MMp}
therefore
$M_\Gamma(z)=M_{\Gamma^\prime}(z)$
for $z\in r(T)$.

2) If
\[
(f,zf)\in
S\hsum V^{-1}(zI)\quad\text{and}
\quad
(f,zf)\notin V^{-1}(zI)
\]
then, since $S\cap V^{-1}(zI)=\{0\}$,
necessarily $(f,z f)\in S$, \ie $f=0$
for $z\in\bbC\setm\sigma_p(S)$.
\end{proof}
To state and prove the next theorem we recall
the definition of similarity of boundary triples,
\cf \cite[Definition~2.14]{Hassi13}.
\begin{defn}\label{defn:similar}
Let $T$ and $T^\prime$ be closed
symmetric relations in Krein spaces
$\fH$ and $\fH^\prime$, assume both $T$
and $T^\prime$ have the same equal defect numbers,
and let $\Pi_\Gamma=(\fL,\Gamma_0,\Gamma_1)$
and
$\Pi_{\Gamma^\prime}=(\fL,\Gamma^\prime_0,\Gamma^\prime_1)$
be the boundary triples for
$T^+$ and $T^{\prime\,+}$.
One says that $\Pi_{\Gamma^\prime}$ is
($U$-)\textit{similar} to
$\Pi_\Gamma$ (or $\Pi_\Gamma$ and
$\Pi_{\Gamma^\prime}$ are ($U$-)similar)
if there is
a standard unitary operator $U\co\fH\lto\fH^\prime$
such that $\Gamma^\prime=\Gamma\wtU^{-1}$ with
\begin{equation}
\wtU\dfn\begin{pmatrix}
U & 0 \\ 0 & U \end{pmatrix}\co
\begin{matrix}
\fH \\ \op \\ \fH
\end{matrix}\lto
\begin{matrix}
\fH^\prime \\ \op \\ \fH^\prime
\end{matrix}\,.
\label{eq:wtU}
\end{equation}
In this case $T^\prime=\wtU(T)=UTU^{-1}$ is similar to $T$
(or $T$ and $T^\prime$ are similar),
but the similarity need not be Hilbert unitary.
\end{defn}
\begin{rem}
A standard unitary operator $U$
from a Krein space $\fH$ with fundamental
symmetry $J$ to a Krein space $\fH^\prime$
with fundamental symmetry $J^\prime$ is
a Hilbert space unitary operator iff
$J^\prime U=UJ$.
\end{rem}
Below, $r(T)$ is as in
Lemma~\ref{lem:Wl} and
$r_1(T)\dfn r(T)\setm\sigma_p(T)$
($\subseteq\hrho(T)$).
\begin{thm}\label{thm:r1}
Let $T$ be a closed symmetric relation
in a Krein space $\fH$, with equal defect
numbers and $r_1(T)\neq\emptyset$. Let
$\Pi_\Gamma=(\fL,\Gamma_0,\Gamma_1)$ be a
boundary triple for $T^+$
with Weyl family $M_\Gamma$.
Let $U\in\cB(\fH,\fH^\prime)$ be a standard
unitary operator, so that $T^\prime\dfn\wtU(T)$
is a closed symmetric relation in
a Krein space $\fK^\prime$, which has
equal defect numbers. Then,
every boundary triple
$\Pi_{\Gamma^\prime}=
(\fL,\Gamma^\prime_0,\Gamma^\prime_1)$ for
$T^{\prime\,+}=\wtU(T^+)$---such that the
corresponding
Weyl family $M_{\Gamma^\prime}(z)=M_\Gamma(z)$
for all $z\in r_1(T)$---is $U$-similar to a
boundary triple
$\Pi_{\Gamma^{\prime\prime}}=
(\fL,\Gamma^{\prime\prime}_0,\Gamma^{\prime\prime}_1)$
for $T^+$, with
$\Gamma^{\prime\prime}\dfn
\Gamma W^{-1}$ and
$W$ a standard unitary operator
in $\fK$ such that $W(T)=T$ and such that
$W(\whfN_z(T^+))=\whfN_z(T^+)$
for all $z\in r_1(T)$.
\end{thm}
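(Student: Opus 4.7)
The strategy is to produce $W$ by factoring an appropriate standard unitary operator $V\in\cV(\Gamma,\Gamma^\prime)$ through $\wtU$. Since $T^\prime=\wtU(T)$, the restriction $\wtU\vrt_T\co T\lto T^\prime$ is a homeomorphism with respect to the graph inner products induced by $\whJ$ and $\whJ^\prime$; hence $T$ and $T^\prime$ are isomorphic, and by Theorem~\ref{thm:l}-2) the set $\cV(\Gamma,\Gamma^\prime)$ contains at least one standard unitary operator $V\co\fK\lto\fK^\prime$. Define $W\dfn\wtU^{-1}V$; since $\wtU$ and $V$ are standard unitary, so is $W$, and $\Gamma W^{-1}=\Gamma V^{-1}\wtU=\Gamma^\prime\wtU$, so the triple $\Pi_{\Gamma^{\prime\prime}}\dfn(\fL,\Gamma W^{-1})$ is automatically $U$-similar to $\Pi_{\Gamma^\prime}$, i.e.\ $\Gamma^\prime=\Gamma^{\prime\prime}\wtU^{-1}$.

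Two subsidiary facts fall out of the construction. From $V(T^+)=\dom\Gamma^\prime=T^{\prime\,+}=\wtU(T^+)$ one has $W(T^+)=T^+$, whence the $\whJ$-unitarity of $W^{-1}$ transfers the Green identity and the surjectivity of the boundary map onto $\fL^2$ from $\Pi_\Gamma$ to $\Pi_{\Gamma^{\prime\prime}}$, confirming that $\Pi_{\Gamma^{\prime\prime}}$ is a boundary triple for $T^+$; the same calculation with $T$ in place of $T^+$ gives $W(T)=\wtU^{-1}(V(T))=\wtU^{-1}(T^\prime)=T$.

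The key step is the invariance $W(\whfN_z(T^+))=\whfN_z(T^+)$ for every $z\in r_1(T)$. For ``$\subseteq$'' I would apply Lemma~\ref{lem:Wl}-1) to the triple $(\Gamma,\Gamma^\prime,V)$: since $S=T$, $z\in r(T)$ and $M_\Gamma(z)=M_{\Gamma^\prime}(z)$, we get $\fN_z(T^+)\subseteq\fN_z(T\hsum V^{-1}(zI))$. The intersection $T\cap V^{-1}(zI)=V^{-1}(T^\prime\cap zI)=V^{-1}(\whfN_z(T^\prime))$ is trivial because $\sigma_p(T^\prime)=\sigma_p(T)$ (as $T^\prime=UTU^{-1}$) and $z\notin\sigma_p(T)$, so Lemma~\ref{lem:Wl}-2) upgrades this to $\fN_z(T^+)\subseteq\fN_z(V^{-1}(zI))$, i.e.\ $V(f,zf)\in zI$ for every $f\in\fN_z(T^+)$. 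Applying the diagonal $\wtU^{-1}$---which maps $zI\subseteq\fK^\prime$ into $zI\subseteq\fK$ thanks to the form \eqref{eq:wtU}---and using $W(T^+)=T^+$, one obtains $W(f,zf)\in T^+\cap zI=\whfN_z(T^+)$. The reverse inclusion comes from the symmetric application of Lemma~\ref{lem:Wl} to $(\Gamma^\prime,\Gamma,V^{-1})$, which is legitimate because $V^{-1}\in\cV(\Gamma^\prime,\Gamma)$ is a standard unitary operator with closed domain, $\ker\Gamma^\prime=T^\prime$, $r(T^\prime)=r(T)$, and $\sigma_p(T^\prime)=\sigma_p(T)$; this yields $V^{-1}(\whfN_z(T^{\prime\,+}))\subseteq zI$, whence $W^{-1}(\whfN_z(T^+))\subseteq\whfN_z(T^+)$, and combining the two inclusions gives the equality.

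The principal obstacle is the coordination of the two applications of Lemma~\ref{lem:Wl}: it is precisely the exclusion of $\sigma_p(T)$ from $r_1(T)$ that makes part 2) of the lemma available in both directions simultaneously, and the diagonal structure \eqref{eq:wtU} of $\wtU$ is essential for the relation $zI$ to be preserved under the factorization $V=\wtU W$. Once these technicalities are dispatched, the $U$-similarity asserted in the theorem is already built into the definition $W=\wtU^{-1}V$.
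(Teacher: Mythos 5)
Your proposal is correct, and its skeleton coincides with the paper's: $T$ and $T^\prime=\wtU(T)$ are isomorphic, so Theorem~\ref{thm:l}-2) supplies a standard unitary $V\in\cV(\Gamma,\Gamma^\prime)$, one sets $W\dfn\wtU^{-1}V$, and the inclusion $W(\whfN_z(T^+))\subseteq\whfN_z(T^+)$ for $z\in r_1(T)$ comes from Lemma~\ref{lem:Wl} applied to $(\Gamma,\Gamma^\prime,V)$ exactly as in Step~1 of the paper's proof. Where you genuinely diverge is the reverse inclusion. The paper (Step~2) uses the parametrization $T_\Theta=\Gamma^{-1}(\Theta)$, $T^{\prime\prime}_\Theta=\Gamma^{\prime\prime\,-1}(\Theta)=W(T_\Theta)$ with the particular choice $\Theta=\Gamma(zI)=\Gamma^{\prime\prime}(zI)$, which yields $W(T\hsum\whfN_z(T^+))=T\hsum\whfN_z(T^+)$ and then the desired equality because the componentwise sum is direct for $z\in r_1(T)$. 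You instead apply Lemma~\ref{lem:Wl} a second time with the roles of the two boundary triples interchanged and $V$ replaced by $V^{-1}$; this is legitimate since $V^{-1}\in\cV(\Gamma^\prime,\Gamma)$ has full (hence closed) domain, $\ker\Gamma^\prime=T^\prime$, and $r(T^\prime)=r(T)$, $\sigma_p(T^\prime)=\sigma_p(T)$ because $T^\prime=UTU^{-1}$ with $U$ a homeomorphism. It gives $W^{-1}(\whfN_z(T^+))\subseteq\whfN_z(T^+)$, and bijectivity of $W$ then forces equality. Both routes are short and sound; yours is the more symmetric one, while the paper's avoids re-checking the hypotheses of Lemma~\ref{lem:Wl} for the inverse data and in passing records the useful identity $T^{\prime\prime}_\Theta=W(T_\Theta)$. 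The only content of the paper's proof you do not reproduce is the converse implication (that a triple $U$-similar to such a $\Pi_{\Gamma^{\prime\prime}}$ automatically has the same Weyl family on $r_1(T)$), which the paper proves but which is not demanded by the statement as written.
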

\begin{proof}
\textit{Step 1.}
Let $\Pi_{\Gamma^\prime}=
(\fL,\Gamma^\prime_0,\Gamma^\prime_1)$ be
a boundary triple for
$T^{\prime\,+}=\wtU(T^+)$ with
Weyl family $M_{\Gamma^\prime}$ such that
$M_{\Gamma^\prime}(z)=M_\Gamma(z)$
for all $z\in r_1(T)$. Since $T^\prime=\wtU(T)$,
$T$ and $T^\prime$ are isomorphic, so by
Theorem~\ref{thm:l}-2) there is a standard unitary
operator $V\in\cV(\Gamma,\Gamma^\prime)$, \ie
$\Gamma^\prime=\Gamma V^{-1}$. But then
$T^\prime=V(T)$, \ie
$W(T)=T$ with $W\dfn\wtU^{-1}V$ a
standard unitary operator in $\fK$. On the
other hand, by Lemma~\ref{lem:Wl}
$(\forall z\in r_1(T))$
$\fN_z(T^+)\subseteq\fN_z(V^{-1}(zI))$
or equivalently
\[
V(\whfN_z(T^+))\subseteq\whfN_z(T^{\prime\,+})\,.
\]
Since
$\whfN_z(T^{\prime\,+})=\wtU(\whfN_z(T^{+}))$,
$W$ therefore leaves $\whfN_z(T^+)$
invariant for all $z\in r_1(T)$.

Conversely, if $\Pi_{\Gamma^\prime}=
(\fL,\Gamma^\prime_0,\Gamma^\prime_1)$
is a boundary triple for
$T^{\prime\,+}=\wtU(T^+)$ which
is $U$-similar to a boundary triple
$\Pi_{\Gamma^{\prime\prime}}=
(\fL,\Gamma^{\prime\prime}_0,\Gamma^{\prime\prime}_1)$
for $T^+$, with $\Gamma^{\prime\prime}\dfn
\Gamma W^{-1}$ and
$W$ a standard unitary operator
in $\fK$ such that $W(T)=T$ and such that
$(\forall z\in r_1(T))$
$\fN_z(T^+)\subseteq\fN_z(W^{-1}(zI))$, then
$(\forall z\in\bbC)$
$M_{\Gamma^\prime}(z)=M_{\Gamma^{\prime\prime}}(z)$,
where $M_{\Gamma^{\prime\prime}}$ is the Weyl
family corresponding to $\Pi_{\Gamma^{\prime\prime}}$.
But by Lemma~\ref{lem:Wl}
$M_{\Gamma^{\prime\prime}}(z)=M_\Gamma(z)$
for all $z\in r_1(T)$.

\textit{Step 2.}
There is
actually the equality in
the inclusion $W(\whfN_z(T^+))\subseteq
\whfN_z(T^+)$, $z\in r_1(T)$.
Let
\[
T_\Theta\dfn\Gamma^{-1}(\Theta)\,,\quad
T^{\prime\prime}_\Theta\dfn
\Gamma^{\prime\prime\,-1}(\Theta)
\]
for an arbitrary relation $\Theta$ in $\fL$.
Then $T^{\prime\prime}_\Theta=W(T_\Theta)$.
With a particular
$\Theta=\Gamma(zI)$
($=\Gamma^{\prime\prime}(zI)$)
one gets that
\[
T^{\prime\prime}_{M_\Gamma(z)}=
T_{M_\Gamma(z)}=T\hsum\whfN_z(T^+)
\]
and therefore
\[
W(T\hsum\whfN_z(T^+))=T\hsum\whfN_z(T^+)\,.
\]
On the other hand, since
\[
W(T\hsum\whfN_z(T^+))=
W(T)\hsum W(\whfN_z(T^+))=
T\hsum W(\whfN_z(T^+))
\]
and since the componentwise sums are direct for
$z\in r_1(T)$, this proves the equality as claimed.
\end{proof}
Recall that $\Omega\subseteq\bbC$ is symmetric
if $\Omega=\Omega^*$.
\begin{lem}\label{lem:ddTTp}
Let $T$ and $T^\prime$ be closed symmetric
relations in Krein spaces
$\fH$ and $\fH^\prime$, both
with the same equal defect numbers, and
let $\Pi_\Gamma=(\fL,\Gamma_0,\Gamma_1)$
and $\Pi_{\Gamma^\prime}=(\fL,\Gamma^\prime_0,
\Gamma^\prime_1)$ be the boundary triples
for $T^+$ and $T^{\prime\,+}$, with
the corresponding Weyl families $M_\Gamma$
and $M_{\Gamma^\prime}$. Let
$T_i\dfn\ker\Gamma_i$ and
$T^\prime_i\dfn\ker\Gamma^\prime_i$
for $i\in\{0,1\}$.
Suppose there is a symmetric subset
$\Omega$ of $\bbC$ such that
$(\forall z\in\Omega)$ $M_\Gamma(z)=
M_{\Gamma^\prime}(z)$.
Then for those
$i$ such that both $\Omega\cap\rho(T_i)$ and
$\Omega\cap\rho(T^\prime_i)$ are nonempty
it holds
\[
\Omega\cap\rho(T_i)\cap\rho(T^\prime_i)=
\Omega\cap\rho(T_i)\cap\hrho_s(T^\prime)=
\Omega\cap\rho(T^\prime_i)\cap\hrho_s(T)
\]
with $\hrho_s(\cdot)\dfn\hrho(\cdot)\cap
\hrho(\cdot)^*$.

In particular, if
$\Omega\cap\hrho_s(T)=\Omega\cap\hrho_s(T^\prime)$
then $\Omega\cap\rho(T_i)$ is nonempty iff
$\Omega\cap\rho(T^\prime_i)$ is nonempty;
in this case
$\Omega\cap\rho(T_i)=
\Omega\cap\rho(T^\prime_i)$.
\end{lem}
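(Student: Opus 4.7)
\emph{Plan of proof.} The first inclusion $\Omega\cap\rho(T_i)\cap\rho(T^\prime_i)\subseteq\Omega\cap\rho(T_i)\cap\hrho_s(T^\prime)$ and its symmetric counterpart are immediate: $T_i\in\Self(T)$ is self-adjoint in the Krein space $\fH$, so $\rho(T_i)=\rho(T_i)^*$, and $T\subseteq T_i$ yields $\rho(T_i)\subseteq\hrho(T)$; combining gives $\rho(T_i)\subseteq\hrho_s(T)$, and likewise $\rho(T^\prime_i)\subseteq\hrho_s(T^\prime)$. The real content is the reverse inclusion, say $\Omega\cap\rho(T_0)\cap\hrho_s(T^\prime)\subseteq\rho(T^\prime_0)$; the case $i=1$ reduces to $i=0$ by passing to the transposed boundary triples $(\fL,\Gamma_1,-\Gamma_0)$ and $(\fL,\Gamma^\prime_1,-\Gamma^\prime_0)$, whose Weyl families $-M_\Gamma^{-1}$ and $-M_{\Gamma^\prime}^{-1}$ still coincide on $\Omega$ and whose zero-component kernels are $T_1$ and $T^\prime_1$ respectively.

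Fix $z\in\Omega\cap\rho(T_0)\cap\hrho_s(T^\prime)$. Since $z\in\rho(T_0)$, the Weyl family $M_\Gamma(z)\in\cB(\fL)$ is an everywhere-defined bounded operator with trivial multivalued part; the hypothesis $M_\Gamma(z)=M_{\Gamma^\prime}(z)$ transfers both properties to $M_{\Gamma^\prime}(z)$. From $\mul M_{\Gamma^\prime}(z)=\Gamma^\prime_1(zI\cap T^\prime_0)=\{0\}$, every $f\in\fN_z(T^\prime_0)$ satisfies $\Gamma^\prime_1(f,zf)=0$; together with $\Gamma^\prime_0(f,zf)=0$ this gives $(f,zf)\in\ker\Gamma^\prime=T^\prime$ and hence $f\in\fN_z(T^\prime)=\{0\}$ by $z\in\hrho(T^\prime)$. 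The symmetric argument using $\bar z\in\Omega\cap\hrho(T^\prime)$ shows $\fN_{\bar z}(T^\prime_0)=\{0\}$. The condition $\dom M_{\Gamma^\prime}(z)=\fL$ says $\Gamma^\prime_0\vrt_{\fN_z(T^{\prime\,+})}$ is surjective onto $\fL$, while $\fN_z(T^\prime_0)=\{0\}$ makes it injective; since $\fN_z(T^{\prime\,+})$ is closed in $\fH^\prime$, the open mapping theorem promotes it to a homeomorphism. This yields the direct componentwise decomposition $T^{\prime\,+}=T^\prime_0\hsum(zI\cap T^{\prime\,+})$, and taking ranges gives $\ran(T^\prime_0-zI)=\ran(T^{\prime\,+}-zI)$.

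The main technical step is then the identity $\ran(T^{\prime\,+}-zI)=\fH^\prime$. Setting $S\dfn T^\prime-\bar zI$, a direct computation with the Krein-adjoint pairing identifies $S^+=T^{\prime\,+}-zI$, and the defining formula $S^+=J^\prime S^*J^\prime$ yields $\ran(S^+)=J^\prime\ran(S^*)$, where $S^*$ is the Hilbert-space adjoint of the closed relation $S$. Since $\bar z\in\hrho(T^\prime)$, $S$ has closed range with $\ker S=\fN_{\bar z}(T^\prime)=\{0\}$; the closed-range theorem for closed linear relations then gives $\ran(S^*)$ closed with $\overline{\ran(S^*)}=(\ker S)^{\bot}=\fH^\prime$, so $\ran(S^*)=\fH^\prime$ and hence $\ran(T^{\prime\,+}-zI)=J^\prime\fH^\prime=\fH^\prime$. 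Together with $\fN_z(T^\prime_0)=\{0\}$, this forces $z\in\rho(T^\prime_0)$, finishing the reverse inclusion.

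The ``moreover'' clause follows at once: the preceding argument used only $z\in\Omega\cap\rho(T_i)\cap\hrho_s(T^\prime)$ and never invoked the nonemptiness of $\Omega\cap\rho(T^\prime_i)$, so under $\Omega\cap\hrho_s(T)=\Omega\cap\hrho_s(T^\prime)$ every $z\in\Omega\cap\rho(T_i)\subseteq\Omega\cap\hrho_s(T)=\Omega\cap\hrho_s(T^\prime)$ automatically lies in $\rho(T^\prime_i)$; interchanging the roles of $T$ and $T^\prime$ gives the claimed equivalence and equality. The one subtlety is the closed-range identification for the Krein adjoint in the third paragraph, which requires a careful passage between Krein and Hilbert adjoints but is ultimately standard; everything else is a straightforward manipulation with the boundary-triple identities and the hypothesis on Weyl families.
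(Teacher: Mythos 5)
Your proof is correct, and it arrives at the same pivotal fact as the paper --- the componentwise decomposition $T^{\prime\,+}=T^\prime_0\hsum\whfN_z(T^{\prime\,+})$ for $z\in\Omega\cap\rho(T_0)$ --- but by a more elementary route. The paper extracts this decomposition from the unitary relation $V_0=\Gamma^{\prime\,-1}\Gamma$, its operator part, and the structure $\Sigma^\prime=N^\prime\hop\whJ^\prime(N^\prime)$ established in Lemma~\ref{lem:Vos} (computing $P_{\whJ^\prime(N^\prime)}(V_0)_s(\whfN_z(T^+))=\whJ^\prime(N^\prime)$ and intersecting $N^{\prime\,+}\hsum\whfN_z(T^{\prime\,+})=\fK^\prime$ with $T^{\prime\,+}$); you get the same decomposition directly from the two properties $\dom M_{\Gamma^\prime}(z)=\fL$ and $\mul M_{\Gamma^\prime}(z)=\{0\}$ inherited from $M_\Gamma(z)$ when $z\in\rho(T_0)$, which is really the same underlying information without the intermediate machinery. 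Your write-up also makes explicit two steps the paper leaves implicit in its final ``and therefore'': that $\fN_z(T^\prime_0)=\{0\}$ (via single-valuedness of the Weyl function plus $z\in\hrho(T^\prime)$) and that $\ran(T^{\prime\,+}-zI)=\fH^\prime$ (via the closed range theorem for the Krein adjoint of $T^\prime-\ol{z}I$, using $\ol{z}\in\hrho(T^\prime)$) --- this added detail is a genuine improvement in transparency. The treatment of $i=1$ by passing to the transposed triples, and the derivation of the ``in particular'' clause from the observation that the key inclusion is pointwise and never uses nonemptiness of $\Omega\cap\rho(T^\prime_i)$, coincide with the paper. Minor cosmetic points only: you occasionally write $\fN_z(T^{\prime\,+})$ where the graph $\whfN_z(T^{\prime\,+})=zI\cap T^{\prime\,+}$ is meant, and the open-mapping-theorem upgrade to a homeomorphism is not actually needed --- algebraic bijectivity of $\Gamma^\prime_0$ restricted to $\whfN_z(T^{\prime\,+})$ already yields the directness of the sum.
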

\begin{proof}
By hypothesis $(\forall z\in\Omega)$
$\Gamma(zI)=\Gamma^\prime(zI)$ it follows that
$(\forall z\in\Omega)$
\begin{align*}
V_0(zI)=&\Gamma^{\prime\,-1}\Gamma^\prime(zI)=
T^\prime\hsum\whfN_z(T^{\prime\,+})
=T^\prime\hop P_{T^{\prime\,\bot}}(\whfN_z(T^{\prime\,+}))
\\
=&T^\prime\hop P_{\Sigma^\prime}(\whfN_z(T^{\prime\,+}))\,,
\quad
\Sigma^\prime\dfn T^{\prime\,+}\cap
T^{\prime\,\bot}=N^\prime\hop
\whJ^\prime(N^\prime)
\end{align*}
where $N^\prime=N(T^\prime,T^\prime_0)$; hence
\[
(V_0)_s(\whfN_z(T^+))=
P_{\Sigma^\prime}(\whfN_z(T^{\prime\,+}))\,.
\]
If $\Omega\cap\rho(T_0)\neq\emptyset$,
using Lemma~\ref{lem:Vos} this shows that
$(\forall z\in\Omega\cap\rho(T_0))$
\[
P_{\whJ^\prime(N^\prime)}(V_0)_s(\whfN_z(T^+))=
\whJ^\prime(N^\prime)=
P_{\whJ^\prime(N^\prime)}(\whfN_z(T^{\prime\,+}))
\]
that is
\[
\fK^\prime=N^{\prime\,+}\hsum\whfN_z(T^{\prime\,+})\,.
\]
Since
$N^{\prime\,+}\cap T^{\prime\,+}=T^\prime_0$,
$(\forall z\in\Omega\cap\rho(T_0))$
\[
T^{\prime\,+}=T^\prime_0\hsum\whfN_z(T^{\prime\,+})
\]
and therefore
\begin{equation}
\Omega\cap\rho(T_0)\cap\hrho_s(T^\prime)\subseteq
\rho(T^\prime_0)\,.
\label{eq:is}
\end{equation}
Similarly, if
$\Omega\cap\rho(T^\prime_0)\neq\emptyset$ then
\begin{equation}
\Omega\cap\rho(T^\prime_0)\cap\hrho_s(T)\subseteq
\rho(T_0)\,.
\label{eq:isb}
\end{equation}
Since
$\rho(T_i)\subseteq\hrho_s(T)$,
and analogously for $T^\prime_i$,
the statement with $i=0$ therefore follows.
The proof of the statement with $i=1$ is analogous,
since $\Pi_{\Gamma^\top}=
(\fL,\Gamma_1,-\Gamma_0)$ and
$\Pi_{\Gamma^{\prime\,\top}}=
(\fL,\Gamma^\prime_1,-\Gamma^\prime_0)$
are the transposed boundary triples
for $T^+$ and $T^{\prime\,+}$ respectively,
with the Weyl families
$-M_\Gamma(z)^{-1}=-M_{\Gamma^\prime}(z)^{-1}$
for all $z\in\Omega$.

Suppose now $\Omega\cap\hrho_s(T)=
\Omega\cap\hrho_s(T^\prime)$.
If $\Omega\cap\rho(T_0)\neq\emptyset$
(resp. $\Omega\cap\rho(T^\prime_0)\neq\emptyset$)
then by \eqref{eq:is} (resp. \eqref{eq:isb})
$\Omega\cap\rho(T^\prime_0)\neq\emptyset$
(resp. $\Omega\cap\rho(T_0)\neq\emptyset$).
\end{proof}
\begin{lem}[\cite{Jursenas21a}]\label{lem:Ju}
Let $T$ be a closed symmetric relation in a Krein
space $\fH$, with equal defect numbers.
Let $\Pi_\Gamma$ be
a boundary triple for $T^+$ with Weyl family
$M_\Gamma$. Then $(\forall z\in r(T))$
$M_\Gamma(z)^*=M_\Gamma(\ol{z})$.
\end{lem}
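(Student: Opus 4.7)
The plan is to establish both inclusions in $M_\Gamma(z)^*=M_\Gamma(\ol{z})$ separately. The inclusion $M_\Gamma(\ol{z})\subseteq M_\Gamma(z)^*$ holds for every $z\in\bbC$ and is an immediate consequence of the Green identity, while the reverse inclusion requires the hypothesis $z\in r(T)$ and relies on the surjectivity of $\Gamma$ together with a Krein-orthogonality identity.

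For the first inclusion I would take $(a,b)=(\Gamma_0\whg,\Gamma_1\whg)$ with $\whg=(g,\ol{z}g)\in T^+$ and an arbitrary $\whf=(f,zf)\in T^+$, $\Gamma\whf=(l,l')$. The Green identity gives
\[
\braket{l,b}-\braket{l',a}=[f,\ol{z}g]-[zf,g]=\ol{z}[f,g]-\ol{z}[f,g]=0,
\]
where I use that $[\cdot,\cdot]$ is conjugate-linear in the first slot. Since $(l,l')$ is arbitrary in $M_\Gamma(z)$, this is exactly the defining property of $M_\Gamma(z)^*$.

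For the reverse inclusion, fix $z\in r(T)$ and $(a,b)\in M_\Gamma(z)^*$. By surjectivity of $\Gamma\co T^+\lto\fL^2$ there exists $\whg=(g,g')\in T^+$ with $\Gamma\whg=(a,b)$. Applying the Green identity to $\whg$ and to $(f,zf)\in T^+$ with $f\in\fN_z(T^+)$, the defining property of $M_\Gamma(z)^*$ forces
\[
[f,g'-\ol{z}g]=[f,g']-\ol{z}[f,g]=[f,g']-[zf,g]=\braket{l,b}-\braket{l',a}=0
\]
for every such $f$, so that $g'-\ol{z}g\in\fN_z(T^+)^{[\bot]}$. The idea is then to modify $\whg$ by a suitable element of $T$ to make the second component equal $\ol{z}$ times the first.

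The key step---and the only place where $z\in r(T)$ is used---is the Krein-orthogonality identity
\[
\fN_z(T^+)^{[\bot]}=\ran(T-\ol{z}I),
\]
in which $\ran(T-\ol{z}I)$ is closed by the definition of $r(T)$. The inclusion $\supseteq$ is a direct computation from the defining property of the Krein adjoint $T^+$ applied to $(\phi,\ol{z}\phi+h)\in T$ and $(f,zf)\in T^+$, and the reverse inclusion follows from $\ran(T-\ol{z}I)^{[\bot]}=\fN_z(T^+)$ (the same computation, read the other way) together with the general identity $\cL^{[\bot][\bot]}=\cL$ for closed linear subsets of a Krein space. Granted this, there is $(h,h')\in T$ with $h'-\ol{z}h=g'-\ol{z}g$; then $\whg-(h,h')=(g-h,\ol{z}(g-h))\in T^+$ has the same $\Gamma$-image $(a,b)$ as $\whg$, proving $(a,b)\in M_\Gamma(\ol{z})$. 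The main obstacle is keeping the two flavours of orthogonality, and the sign/conjugation conventions of the indefinite inner product, straight; after that the argument is bookkeeping.
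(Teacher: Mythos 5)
Your argument is correct. Note that the paper states Lemma~\ref{lem:Ju} as a quotation from \cite{Jursenas21a} and gives no proof here, so there is nothing internal to compare against; your proof is the standard one. Both halves check out: the inclusion $M_\Gamma(\ol{z})\subseteq M_\Gamma(z)^*$ is indeed the Green identity applied to $\whfN_z(T^+)$ and $\whfN_{\ol{z}}(T^+)$, and the reverse inclusion correctly combines surjectivity of $\Gamma$, the identity $\fN_z(T^+)=\ran(T-\ol{z}I)^{[\bot]}$ together with $\cL^{[\bot][\bot]}=\ol{\cL}$ (valid in a Krein space since $\cL^{[\bot]}=(J\cL)^{\bot}$), the closedness of $\ran(T-\ol{z}I)$ for $z\in r(T)$, and the fact that $T=\ker\Gamma$ so that subtracting $(h,h')\in T$ does not change the boundary values.
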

\begin{defn}\label{defn:sub}
A relation $T$ in a Krein space $\fH$
has property $(P)$
if $\dom T+\ran T$ is dense in $\fH$.
\end{defn}
The key feature of $T$ with $(P)$ is
in Lemma~\ref{lem:sfN}, \cf Lemma~\ref{lem:P3},
and is a slight generalization of a densely defined $T$.
Below, a standard $T$ is as in Definition~\ref{defn:standard},
\ie with a nonempty $\delta(T)$, but in
a Krein space setting.
\begin{thm}\label{thm:0ab}
Let $T$ and $T^\prime$ be standard symmetric
operators in Krein spaces $\fH$ and $\fH^\prime$,
both with the same equal defect numbers and
properties $(P)$.
Let $\Pi_\Gamma=(\fL,\Gamma_0,\Gamma_1)$
and $\Pi_{\Gamma^\prime}=(\fL,\Gamma^\prime_0,
\Gamma^\prime_1)$ be the boundary triples
for $T^+$ and $T^{\prime\,+}$, with
Weyl families $M_\Gamma$ and $M_{\Gamma^\prime}$
and gamma-fields $\gamma_\Gamma$ and
$\gamma_{\Gamma^\prime}$.
Suppose there is a symmetric subset
$\Omega\subseteq\delta(T)\cap\delta(T^\prime)$
such that:
\begin{SL}
\item[$(a)$]
$\Omega\subseteq\rho(T_0)\cap\rho(T_1)$
and
$(\forall z\in\Omega)$
$M_\Gamma(z)=M_{\Gamma^\prime}(z)$;
\item[$(b)$]
$(T_0,\gamma_\Gamma)$
and $(T^\prime_0,\gamma_{\Gamma^\prime})$
realize $M_\Gamma$ and
$M_{\Gamma^\prime}$ minimally, \ie
\item[]
$\bigvee\{\fN_z(T^+)\vrt z\in
\Omega\}=\fH$ and
$\bigvee\{\fN_z(T^{\prime\,+})\vrt z\in
\Omega\}=\fH^\prime$.
\end{SL}
Then the boundary triples
$\Pi_{\Gamma}$ and $\Pi_{\Gamma^\prime}$
are similar.
\end{thm}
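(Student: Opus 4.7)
The strategy follows the four-stage outline sketched in the introduction: first produce a standard unitary operator $U\co\fH\lto\fH^\prime$ implementing an isomorphism of $T$ with $T^\prime$; then, via Theorem~\ref{thm:l}, extract a standard unitary $V\in\cV(\Gamma,\Gamma^\prime)$; then apply Theorem~\ref{thm:r1} to factor $V=\wtU W$; and finally, use the minimality hypothesis to force $W$ into a diagonal form.

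For the first stage, applying the Green identity to $\whgm_\Gamma(z)l$ and $\whgm_\Gamma(w)l^\prime$ in $T^+$ yields the reproducing-kernel identity
\[
(w-\ol{z})[\gamma_\Gamma(z)l,\gamma_\Gamma(w)l^\prime]=
\braket{l,M_\Gamma(w)l^\prime}-
\braket{M_\Gamma(z)l,l^\prime}
\]
valid for $z,w\in\Omega$, $l,l^\prime\in\fL$, and there is a completely analogous identity for $\Gamma^\prime$; recall Lemma~\ref{lem:Ju}. Because $M_\Gamma=M_{\Gamma^\prime}$ on $\Omega$, the linear map $U_0\co\sum_i\gamma_\Gamma(z_i)l_i\mapsto\sum_i\gamma_{\Gamma^\prime}(z_i)l_i$ on the linear span of the defect elements is unambiguous and preserves the indefinite inner product, has dense domain and dense range by hypothesis~$(b)$, and by property~$(P)$ on both sides---together with the Pontryagin-space minimal-realization arguments in the style of \cite{Hassi98,Langer77}---extends to a standard unitary $U\in\cB(\fH,\fH^\prime)$. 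By construction $\wtU\whgm_\Gamma(z)=\whgm_{\Gamma^\prime}(z)$ on $\fL$ for every $z\in\Omega$, so $\wtU(T^+)=T^{\prime\,+}$ and subsequently $\wtU(T)=T^\prime$. Hence $T$ and $T^\prime$ are isomorphic, and Theorem~\ref{thm:l}-2) furnishes a standard unitary $V\in\cV(\Gamma,\Gamma^\prime)$.

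Noting that symmetry of $\Omega\subseteq\rho(T_0)$ places $\Omega$ inside $\hrho(T)\cap\hrho(T)^*$, while $T\subseteq T_0$ together with $\Omega\subseteq\rho(T_0)$ kills the point spectrum of $T$ on $\Omega$, it follows that $\Omega\subseteq r_1(T)$. Theorem~\ref{thm:r1}, applied with the $U$ from the previous stage, yields a factorization $V=\wtU W$, where $W$ is a standard unitary on $\fK$ with $W(T)=T$ and $W(\whfN_z(T^+))=\whfN_z(T^+)$ for every $z\in r_1(T)$.

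The remaining task---and the anticipated main obstacle---is to show that $W$ is necessarily of the diagonal form $\wtU_0$ for some standard unitary $U_0\in\cB(\fH)$, for then $V=\widetilde{UU_0}$ exhibits $\Pi_\Gamma$ and $\Pi_{\Gamma^\prime}$ as $(UU_0)$-similar in the sense of Definition~\ref{defn:similar}. Writing $W=(W_{ij})_{i,j=1}^2$ relative to $\fK=\fH\op\fH$, invariance of the graph $\whfN_z(T^+)$ forces the operator pencil $z^2W_{12}+z(W_{11}-W_{22})-W_{21}$ to annihilate $\fN_z(T^+)$ for every $z\in\Omega$. The plan is to combine this with $W(T)=T$, with the totality $\bigvee\{\fN_z(T^+)\vrt z\in\Omega\}=\fH$, and with the explicit block structure of standard unitaries in $\cV(\Gamma,\Gamma W^{-1})$ supplied by Theorem~\ref{thm:l}-2) (and the last example of Section~\ref{sec:obt}) to conclude $W_{12}=W_{21}=0$ and $W_{11}=W_{22}=:U_0$, the standard unitarity of $U_0$ on $\fH$ being inherited from that of $W$ on $\fK$. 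The delicate point is exploiting the abundance of $z\in\Omega$, guaranteed by $\Omega\subseteq\delta(T)\cap\rho(T_0)\cap\rho(T_1)$ together with the standard-operator hypothesis (\cf Theorem~\ref{thm:ex}), in order to separate the polynomial dependence in $z$ from the action on the $z$-dependent subspaces $\fN_z(T^+)$.
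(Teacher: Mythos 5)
Your proposal reproduces the paper's architecture (kernel identity $\Rightarrow$ standard unitary $U$; Theorem~\ref{thm:l}-2) $\Rightarrow$ $V\in\cV(\Gamma,\Gamma^\prime)$; Theorem~\ref{thm:r1} $\Rightarrow$ $V=\wtU W$; then diagonalize $W$), but two of its steps are not actually carried out, and one of them hides a real difficulty. First, the passage from $U\gamma_\Gamma(z)=\gamma_{\Gamma^\prime}(z)$, $z\in\Omega$, to $\wtU(T^+)=T^{\prime\,+}$ and hence $\wtU(T)=T^\prime$ is unjustified: what the construction gives is $\wtU(\whfN_z(T^+))=\whfN_z(T^{\prime\,+})$ for $z\in\Omega$, and hypothesis $(b)$ only says that the \emph{first components} $\fN_z(T^+)$ span $\fH$; it does not say that the graphs $\whfN_z(T^+)$ span $T^+$ inside $\fK$, so you cannot conclude that $\wtU$ maps $T^+$ onto $T^{\prime\,+}$. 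The paper bridges this by first checking (Lemma~\ref{lem:ddTTp}) that $\Omega\subseteq\rho(T^\prime_0)\cap\rho(T^\prime_1)$, then deriving $\wtU(T_0)=T^\prime_0$ from the resolvent identity $\gamma_\Gamma(z)-\gamma_\Gamma(z_0)=(z-z_0)(T_0-zI)^{-1}\gamma_\Gamma(z_0)$, deriving $\wtU(T_1)=T^\prime_1$ from the Krein--Naimark formula, and finally using $T=T_0\cap T_1$. Your argument never uses the hypothesis $\Omega\subseteq\rho(T_1)$, which is precisely what makes $M_\Gamma(z)^{-1}$ bounded in the Krein--Naimark step; that unused hypothesis is a warning sign. (A smaller related point: well-definedness of $U_0$ on the span of the defect elements does not follow from preservation of the indefinite Gram matrix, since $[x,x]=0$ does not force $x=0$ in a Krein space; the paper instead uses the mutual disjointness of the $\fN_z(T^+)$, Lemma~\ref{lem:sfN}, which is exactly where property $(P)$ enters.)

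Second, the step you flag as ``the plan''---forcing $W$ into diagonal form---is the genuinely nontrivial part of the theorem and is left unexecuted. The pencil condition $\fN_z(T^+)\subseteq\ker p_W(z)$ together with totality of the $\fN_z(T^+)$ does not by itself separate the coefficients, because the subspaces on which the pencil vanishes move with $z$. The paper's device is different: from $W(\whfN_z(T^+))=\whfN_z(T^+)$ one obtains a bijection $K_z\dfn(A+zB)\vrt_{\fN_z(T^+)}=(D+z^{-1}C)\vrt_{\fN_z(T^+)}$ of $\fN_z(T^+)$, which the unitarity relations \eqref{eq:VABCD} together with $\ol{z}\neq z$ show to be a Hilbert-space isometry; by disjointness of the defect subspaces and the minimality hypothesis these isometries patch together and extend to a single standard unitary $K\in\cB(\fH)$ satisfying $K=A+zB=D+z^{-1}C$ on all of $\fH$ for every $z\in\Omega$, whence $B=C=0$ and $A=D=K$. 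Without this isometry argument (or a substitute for it) the conclusion $W=\wtK$ does not follow, so the proof is incomplete at its decisive point.
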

\begin{proof}%[Proof of Theorem~\ref{thm:0ab}]
\textit{Step 1.}
By $(a)$ and Lemma~\ref{lem:ddTTp}
$\Omega\subseteq\rho(T^\prime_i)$.
In particular $(\forall z\in\Omega)$
$M_\Gamma(z)=M_{\Gamma^\prime}(z)
\in\cB(\fL)$.

\textit{Step 2.}
Let $U\in\cB(\fH,\fH^\prime)$ satisfy
(\cf \cite[Theorem~3.2]{Hassi98},
\cite[Theorem~2.2]{Langer77},
\cite[Theorem~7.122]{Derkach17b})
\begin{equation}
U\supseteq U_z\dfn\gamma_{\Gamma^\prime}(z)
\gamma_\Gamma(z)^{-1}\,,\quad z\in\Omega\,.
\label{eq:Ud}
\end{equation}
$U$ is correctly defined:
By definition $U\supseteq U_z\hsum U_{z_0}$
for all $z$, $z_0\in\Omega$. If $z=z_0$ then
clearly $U_z\hsum U_{z_0}$ is an operator.
If $z\neq z_0$ then
$U_z\hsum U_{z_0}$ is an operator too, since
\begin{align*}
\mul(U_z\hsum U_{z_0})=&\ran(U_z-U_{z_0})
\\
=&
\{\gamma_{\Gamma^\prime}(z)l-
\gamma_{\Gamma^\prime}(z_0)l_0\vrt
l,l_0\in \fL\,;
\\
&\gamma_\Gamma(z)l=\gamma_\Gamma(z_0)l_0\in
\fN_z(T^+)\cap\fN_{z_0}(T^+) \}
\end{align*}
is trivial by Lemma~\ref{lem:sfN}.
Similarly $U_{z_0}\hsum U_{z_1}\hsum\cdots\hsum
U_{z_n}$ is an operator for all
$z_0$, $z_1$, $\ldots$, $z_n\in\Omega$,
$n\geq2$.

\textit{Step 3.}
By the Green identity and Lemma~\ref{lem:Ju}
$(\forall l,l_0\in\fL)$
$(\forall z,z_0\in\rho(T_0))$
\[
(\ol{z}-z_0)[\gamma_\Gamma(z)l,
\gamma_\Gamma(z_0)l_0 ]=
\braket{l,(M_\Gamma(\ol{z})-
M_\Gamma(z_0))l_0}
\]
where $[\cdot,\cdot]$ is an indefinite
inner product in $\fH$,
$\braket{\cdot,\cdot}$ is a
scalar product in $\fL$.
Since a similar identity holds for
$\Pi_{\Gamma^\prime}$,
$U_z$ is an isometry for
$z\in\Omega$.
Using \eqref{eq:Ud} and $(b)$,
by extension by
continuity $U$ is therefore a standard unitary
operator $\fH\lto\fH^\prime$.

\textit{Step 4.}
Using
(\cf \cite[Propositions~2.2, 2.3]{Derkach99})
\[
\gamma_\Gamma(z)-\gamma_\Gamma(z_0)=
(z-z_0)(T_0-zI)^{-1}\gamma_\Gamma(z_0)
\]
valid for all $z$, $z_0\in\rho(T_0)$, and
a similar equality for $\gamma_{\Gamma^\prime}$,
$(\forall z\in\Omega)$
\[
U(T_0-zI)^{-1}=(T^\prime_0-zI)^{-1}U\quad
\text{on}\quad\fH
\]
from which follows
$T^\prime_0=\wtU(T_0)$, $\wtU$ as in
\eqref{eq:wtU}.

\textit{Step 5.}
By the Krein--Naimark resolvent formula
(\eg \cite[Theorem~2.1]{Derkach99})
\[
(T_1-zI)^{-1}=(T_0-zI)^{-1}-
\gamma_\Gamma(z)M_\Gamma(z)^{-1}
\gamma_\Gamma(\ol{z})^+\,,\quad
z\in\Omega
\]
and a similar
formula for $(T^\prime_1-zI)^{-1}$,
it follows that $T^\prime_1=\wtU(T_1)$.
Since
\[
T=T_0\cap T_1\,,\quad
T^\prime=T^\prime_0\cap T^\prime_1
\]
this shows $T^\prime=\wtU(T)$.
Thus, see
Theorem~\ref{thm:l}-2),
there is a standard unitary operator
$V\in\cV(\Gamma,\Gamma^\prime)$
of the form $V=\wtU W$, where $W\in\cB(\fK)$
is a standard unitary operator
as in Theorem~\ref{thm:r1}.

\textit{Step 6.}
Let $\Gamma^{\prime\prime}\dfn\Gamma W^{-1}$,
so that $\Pi_{\Gamma^{\prime\prime}}=
(\fL,\Gamma^{\prime\prime}_0,\Gamma^{\prime\prime}_1)$
is a boundary triple for $T^+$,
$U^{-1}$-similar to $\Pi_{\Gamma^\prime}$;
see Theorem~\ref{thm:r1}.
We consider $W$ of the form \eqref{eq:VABCD};
hence $(\forall z\in\Omega\subseteq r_1(T))$
$W(\whfN_z(T^+))=\whfN_z(T^+)$ gives
\[
\fN_z(T^+)=\ran K_z
\]
where a bijective operator
\[
K_z\dfn(A+zB)\vrt_{\fN_z(T^+)}=
(D+z^{-1}C)\vrt_{\fN_z(T^+)}
\in\cB(\fN_z(T^+))
\]
with the inverse
\[
K^{-1}_z=(A^+-z^{-1}C^+)\vrt_{\fN_z(T^+)}\,.
\]

$K_z$ is an isometry:
$(\forall f_z\in\fN_z(T^+))$
\begin{align*}
(A+zB)^+(A+zB)f_z=&
(A^++\ol{z}B^+)(D+z^{-1}C)f_z
\\
=&[I+C^+B+z^{-1}C^+A+\ol{z}D^+B
\\
&+\ol{z}z^{-1}(D^+A-I) ]f_z
\\
=&(1-\ol{z}z^{-1})f_z+\ol{z}z^{-1}
(A+zB)^+(A+zB)f_z
\end{align*}
and use that $\ol{z}\neq z$.

By applying Lemma~\ref{lem:sfN}
$K_z$ extends to a standard unitary
operator, $K\in\cB(\fH)$. Therefore
$K=A+zB$ on $\fH$, $z\in\Omega$, and then
$A=D=K$ and $B=C=0$; hence
$W=\wtK$, where a
standard unitary operator
$\wtK$ in $\fK$ is of the form \eqref{eq:wtU}, with
$U$ replaced by $K$, that is,
the boundary triple
$\Pi_{\Gamma^\prime}$ is $UK$-similar to
the boundary triple $\Pi_\Gamma$.
\end{proof}
\begin{rem}
In the framework of Pontryagin spaces,
minimal realizations of
generalized Nevanlinna functions are
extensively studied in
\cite{Hassi16,Dijksma04a,Langer00,Hassi98,Dijksma93}.
If moreover $T$ and $T^\prime$ are densely defined
and simple, then by using Theorem~\ref{thm:ex}
one arrives at Theorem~\ref{thm:1}
with $\Omega=\bbC_*$.
This particular result formally can be
generalized as follows.
\end{rem}
Below,
by a semi-standard simple operator in
a Pontryagin space $\fH$
we mean a standard symmetric operator $T$
with property $(P)$, equal defect numbers,
and moreover such that (i) and (ii) hold:
\begin{SL}
\item[(i)]
The set
\[
\Delta_0(T)\dfn
\bigcap_{N\in\cN}\{z\in O_s(T,N)\vrt
z,\ol{z}\notin\sigma^0_p(N) \}
\]
is nonempty,
$O_s(T,N)$ as in Lemma~\ref{lem:Os};
\item[(ii)]
$\bigvee\{\fN_z(T^+)\vrt z\in
\Delta_0(T) \}=\fH$.
\end{SL}
One says that semi-standard simple operators $T$ and
$T^\prime$ in Pontryagin spaces $\fH$
and $\fH^\prime$ belong
to the class $\cC_\Delta$ if
$\Delta_0(T)=\Delta_0(T^\prime)\equiv\Delta$.
Then
\begin{cor}\label{cor:P}
The Weyl function on $\Delta$
determines a boundary
triple (in particular a canonical extension)
of an operator from the class $\cC_\Delta$
uniquely up to similarity.
\end{cor}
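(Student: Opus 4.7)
\medskip
\noindent\textbf{Proof proposal for Corollary~\ref{cor:P}.}
The plan is to reduce Corollary~\ref{cor:P} to Theorem~\ref{thm:0ab} applied with the symmetric set $\Omega\dfn\Delta$. The ``only if'' direction (similar boundary triples yield identical Weyl functions) is immediate from Definition~\ref{defn:similar} via $\Gamma^\prime=\Gamma\wtU^{-1}$, so the substance of the proof is the converse: assuming $M_\Gamma(z)=M_{\Gamma^\prime}(z)$ for all $z\in\Delta$, produce a standard unitary $U\co\fH\lto\fH^\prime$ such that $\Gamma^\prime=\Gamma\wtU^{-1}$.

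First I would check that $\Delta$ qualifies as the set $\Omega$ of Theorem~\ref{thm:0ab}. Symmetry of $\Delta$ and the inclusion $\Delta\subseteq\delta(T)\cap\delta(T^\prime)$ are built into Definition of $\Delta_0(\cdot)$, since each $O_s(T,N)\subseteq\delta(T)$ (and analogously for $T^\prime$) and the exclusion $z,\ol{z}\notin\sigma^0_p(N)$ is itself invariant under $z\leftrightarrow\ol{z}$. Minimality, hypothesis $(b)$ of Theorem~\ref{thm:0ab}, is exactly clause (ii) in the definition of the class $\cC_\Delta$, namely $\bigvee\{\fN_z(T^+)\vrt z\in\Delta\}=\fH$ and the corresponding statement for $T^\prime$. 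Equality of the Weyl families on $\Omega=\Delta$ is the hypothesis of the corollary.

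The heart of the proof, and the step I expect to be the main obstacle, is the inclusion $\Delta\subseteq\rho(T_0)\cap\rho(T_1)$ required in hypothesis $(a)$ of Theorem~\ref{thm:0ab}. For $T_0=\ker\Gamma_0$ and the associated $N=N(T,T_0)\in\cN$ from Theorem~\ref{thm:1-1}, Lemma~\ref{lem:Os} reads $\bbC_*\cap\rho(T_0)=O_s(T,N)\cap\delta(N)$, and by the intersective nature of $\Delta_0(T)=\Delta$ every $z\in\Delta$ lies in $O_s(T,N)$ with $z,\ol{z}\notin\sigma^0_p(N)$. The delicate point is upgrading this eigenvalue exclusion to the full condition $z,\ol{z}\in\hrho(N)$, i.e.\ closedness of $\ran(N-wI)$ for $w\in\{z,\ol{z}\}$. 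Here I would exploit the Pontryagin structure together with the decomposition $T_0=T\hop N$: since $z\in O_s(T,N)\subseteq\delta(T)\subseteq\hrho(T)$ the range $\ran(T-wI)$ is closed; the direct sum guaranteed by $z\in O_s(T,N)$, paired with a Pontryagin-space argument giving closedness of $\ran(T_0-wI)$, should yield closedness of the complementary summand $\ran(N-wI)$. The same argument applied to $T_1=\ker\Gamma_1\in\Self(T)$, with $N$ replaced by $N(T,T_1)\in\cN$, yields $\Delta\subseteq\rho(T_1)$.

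Once hypothesis $(a)$ for $T$ is established, the companion inclusion $\Delta\subseteq\rho(T^\prime_0)\cap\rho(T^\prime_1)$ follows automatically from Lemma~\ref{lem:ddTTp}, which transports resolvent information across identical Weyl families provided $\Delta\subseteq\hrho_s(T)\cap\hrho_s(T^\prime)$ — and the latter is part of the already verified inclusion $\Delta\subseteq\delta(T)\cap\delta(T^\prime)$. Theorem~\ref{thm:0ab} then delivers the similarity of $\Pi_\Gamma$ and $\Pi_{\Gamma^\prime}$, which is precisely the assertion of Corollary~\ref{cor:P}. The density/property-$(P)$ and standard-symmetry hypotheses of Theorem~\ref{thm:0ab} are supplied by the semi-standard simple condition built into $\cC_\Delta$, so no further verification is needed there.
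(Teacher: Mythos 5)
Your reduction is exactly the paper's: Corollary~\ref{cor:P} is presented as a formal restatement of Theorem~\ref{thm:0ab} with $\Omega=\Delta$, and the paper gives no separate proof (its explicit verification concerns only the special case $\Delta_0(T)=\bbC_*$ in the proof of Theorem~\ref{thm:1}). Your checks of the symmetry of $\Delta$, of $\Delta\subseteq\delta(T)\cap\delta(T^\prime)$, of minimality via clause (ii), and of $\Delta\subseteq\rho(T_1)$ by running the argument with $N(T,T_1)\in\cN$ are all as intended.

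One remark: the ``delicate point'' you single out --- upgrading $z,\ol{z}\notin\sigma^0_p(N)$ to $z,\ol{z}\in\delta(N)$, i.e.\ closedness of $\ran(N-wI)$ --- is not actually an obstacle, and your sketched fix (extracting closedness of a complementary summand from a non-orthogonal direct sum of ranges) is the one incomplete step in the proposal; as stated it is not a valid inference for general direct sums. You do not need it. The proof of Lemma~\ref{lem:Os} shows that $\bbC_*\cap\rho(T_0)=\delta(T_0)=(\bbC_*\setm\sigma^0_p(T_0))\cap(\bbC_*\setm\sigma^0_p(T_0))^*$, because $\ran(T_0-zI)$ is automatically closed for a self-adjoint relation in a Pontryagin space, and Lemma~\ref{lem:O}$(c)$ then gives $z\notin\sigma^0_p(T_0)$ iff $z\in O(T,N)$ and $z\notin\sigma_p(T)\cup\sigma_p(N)$. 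Since $z\in O_s(T,N)\subseteq\delta(T)$ already excludes $z,\ol{z}\in\sigma_p(T)$, the eigenvalue exclusion $z,\ol{z}\notin\sigma^0_p(N)$ built into $\Delta_0(T)$ is all that is required; the inclusion $\Delta\subseteq\delta(N)$ then follows a posteriori from the stated equality in Lemma~\ref{lem:Os} rather than being an input to it.
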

Viewing Theorem~\ref{thm:1} as a special case
of Corollary~\ref{cor:P}
we have:
\begin{proof}[Proof of Theorem~\ref{thm:1}]
We only need to verify that $\Delta_0(T)=\bbC_*$
for a densely defined
simple symmetric operator $T$ in a Pontryagin
space $\fH$, which has equal defect numbers.
Thus, by Lemma~\ref{lem:exN} $(\forall N\in\cN)$
$\sigma^0_p(N)=\emptyset$, so
$\Delta_0(T)=\bigcap_{N\in\cN}O_s(T,N)$,
and by Lemma~\ref{lem:Os}
$O_s(T,N)=\delta(T)=\bbC_*$.
\end{proof}
\subsection*{Acknowledgment}
It is a pleasure to thank the referee for
valuable comments.
%%%%%%%%%%%%%%%%%%%%%%%%%%%%%%%%%%%%%%%%%%%%%%%%%%%%%%%%%%%%%%
%%%%%%%%%%%%%%%%%%%%%%%%%%%%%%%%%%%%%%%%%%%%%%%%%%%%%%%%%%%%%%
%%%%%%%%%%%%%%%%%%%%%%%%%%%%%%%%%%%%%%%%%%%%%%%%%%%%%%%%%%%%%%
%%%%%%%%%%%%%%%%%%%%%%%%%%%%%%%%%%%%%%%%%%%%%%%%%%%%%%%%%%%%%%
\appendix
\section{}\label{app:A}
%%%%%%%%%%%%%%%%%%%%%%%%%%%%%%%%%%%%%%%%%%%%%%%%%%%%%%%%%%%%%%
%%%%%%%%%%%%%%%%%%%%%%%%%%%%%%%%%%%%%%%%%%%%%%%%%%%%%%%%%%%%%%
%%%%%%%%%%%%%%%%%%%%%%%%%%%%%%%%%%%%%%%%%%%%%%%%%%%%%%%%%%%%%%
%%%%%%%%%%%%%%%%%%%%%%%%%%%%%%%%%%%%%%%%%%%%%%%%%%%%%%%%%%%%%%
\begin{lem}\label{lem:eqGH}
Let $\fH=(\fH,[\cdot,\cdot])$ be a Krein space with fundamental
symmetry $J$, and consider relations $G$ and $H$
in $\fH$ such that
$H\subseteq G^+\cap G^\bot$. Then:
\begin{SL}
\item[$(a)$]
If $\dom G\bot\dom H$ then
\begin{equation}
\begin{split}
&\ran(JH+zI)\subseteq\fN_z(JG^+)
\\
&\text{or equivalently}
\\
&\ran(JG+zI)\subseteq\fN_z(JH^+)
\end{split}
\label{eq:incs}
\end{equation}
for all $z\in\bbC$.
\item[$(b)$]
If $G\hop H$ is a neutral subset of $\fK$
then:
\begin{SL}
\item[$(i)$]
The inclusions in \eqref{eq:incs} hold
for both $z=\img$ and $z=-\img$.
\item[$(ii)$]
The inclusions in \eqref{eq:incs} become
the equalities for $z=\img$ or
$z=-\img$
(resp. for both $z=\img$ and $z=-\img$)
iff $G\hop H$ is maximal (resp. hyper-maximal)
neutral.
\end{SL}
\end{SL}
\end{lem}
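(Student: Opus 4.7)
My approach is to translate the entire statement to the Hilbert space $(\fH, [\cdot, J\cdot])$ by setting $\fG \dfn JG$ and $\fH_r \dfn JH$, so that the two inclusions in \eqref{eq:incs} become $\ran(\fH_r + zI) \subseteq \fN_z(\fG^*)$ and $\ran(\fG + zI) \subseteq \fN_z(\fH_r^*)$, where $\fG^* = JG^+$ and $\fH_r^* = JH^+$. The hypothesis $H \subseteq G^+ \cap G^\bot$ is symmetric in $G$ and $H$, since $H \subseteq G^+$ iff $G \subseteq H^+$ (by conjugating the defining identity $[h', g] = [h, g']$) and Hilbert orthogonality in $\fK$ is symmetric; thus it suffices to treat one of the two inclusions, the other following by relabeling. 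Unfolding $v = Jh' + zh \in \fN_z(\fG^*)$ means $(Jh' + zh,\, zh' + z^2 Jh) \in G^+$, and pairing this with an arbitrary $(g, g') \in G$ yields
\[
\bar z\,[h', g] + \bar z^2\,[Jh, g] = [Jh', g'] + \bar z\,[h, g'].
\]
Using $H \subseteq G^+$ to cancel the $\bar z$-terms, and then $H \subseteq G^\bot$ rewritten via the Krein self-adjointness of $J$ as $[Jh', g'] + [Jh, g] = 0$, this collapses to $(\bar z^2 + 1)\,[Jh, g] = 0$.

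This single identity settles both (a) and (b)(i). If $\dom G \bot \dom H$, then $[Jh, g] = 0$ for all admissible pairs and the inclusion holds for every $z \in \bbC$, giving (a); if instead $z = \pm\img$, then $\bar z^2 + 1 = 0$ and the inclusion holds without any domain orthogonality, giving (b)(i). For (b)(ii) I would then compare the Krein maximality of the neutral subspace $G \hop H \subseteq \fK$ with the Hilbert defect numbers of $\wtilde\fG \dfn \fG \hop \fH_r$. Part (b)(i) together with $\fN_{\pm\img}(\fG^*) = \ran(\fG \mp \img I)^\bot$ makes the sum
\[
\ran(\wtilde\fG + \img I) = \ran(\fG + \img I) \op \ran(\fH_r + \img I)
\]
orthogonal in $\fH$; combining this with $\fH = \ran(\fG + \img I) \op \fN_\img(\fG^*)$ gives $\fN_\img(\wtilde\fG^*) = \fN_\img(\fG^*) \om \ran(\fH_r + \img I)$, and analogously at $-\img$. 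So each equality in \eqref{eq:incs} at $z = \pm\img$ is equivalent to the vanishing of the corresponding defect number of $\wtilde\fG$.

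Both equalities together therefore force $\wtilde\fG = \wtilde\fG^*$; since $\wtilde\fG = J(G \hop H)$ and $(JT)^* = JT^+$ for any relation $T$, this is equivalent to $G \hop H = (G \hop H)^+$, which for a neutral subspace of $\fK$ is exactly the hyper-maximal condition $G \hop H = (G \hop H)^{[\bot]}$ (recall that $T^+ = T^{[\bot]}$ in $\fK$). For a single equality I would argue that Hilbert symmetric extensions of $\wtilde\fG$ correspond, via $J$ applied to the second coordinate, to Krein neutral extensions of $G \hop H$ in $\fK$, so that $\wtilde\fG$ is Hilbert maximal symmetric (one defect number zero) iff $G \hop H$ is Krein maximal neutral. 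The main obstacle I anticipate is precisely this last equivalence: making the dictionary between Hilbert maximal symmetry of $\wtilde\fG$ and Krein maximal neutrality of $G \hop H$ clean, and justifying closedness of $\ran(\fH_r \pm \img I)$ and $\ran(\fG \pm \img I)$ needed for the orthogonal complement formula, requires some care, but both follow under the natural closedness of $G$, $H$ and of their orthogonal componentwise sum.
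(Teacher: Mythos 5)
Your proposal is correct and follows essentially the same route as the paper's: everything is transported to the Hilbert space $(\fH,[\cdot,J\cdot])$, parts (a) and (b)(i) come down to the single pairing identity $(\ol{z}^{\,2}+1)[Jh,g]=0$ (the paper organizes the very same computation as a decomposition of $(f^\prime+zf,z(f^\prime+zf))$ into elements of $JG^+$), and (b)(ii) is reduced to surjectivity of $J(G\hop H)\pm\img I$, i.e.\ to the defect numbers of the symmetric relation $J(G\hop H)$, which is exactly the content of the paper's appeal to $P_\pm(G\hop H)=\fK$. The only blemish is a sign in your justification of the range orthogonality: $\fN_{\pm\img}(JG^+)=\ran(JG\pm\img I)^{\bot}$, not $\ran(JG\mp\img I)^{\bot}$; with this correction part (b)(i) does yield $\ran(JH+\img I)\bot\ran(JG+\img I)$ as your argument requires.
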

\begin{rem}
In $(b)$, $\dom G\bot\dom H$
is not assumed. As in the body of the text,
the symbol $\bot$ indicates the orthogonality
with respect to a Hilbert space metric
$[\cdot,J\cdot]$, while $[\bot]$
refers to the orthogonality with respect to
$[\cdot,\cdot]$.
\end{rem}
\begin{proof}
$(a)$
$\dom G\bot\dom H$ implies that
\[
\dom(JH)=\dom H\subseteq(\dom G)^\bot=
\mul(JG^+)\,.
\]
Since $G\bot H$, this
implies that also
\[
\ran(JH)\subseteq(\ran(JG))^\bot=\ker G^+=
\ker(JG^+)\,.
\]

Consider $(f,f^\prime)\in JH$, so that
$f^\prime+zf\in\ran(JH+zI)$ for all $z\in\bbC$. Then
\[
(f^\prime+zf,z(f^\prime+zf))=
z(f,f^\prime)+(f^\prime,0)+z^2(0,f)\,.
\]
Since $(f,f^\prime)$, $(f^\prime,0)$, and
$(0,f)$ are all the elements from $JG^+$,
$f^\prime+zf\in\fN_z(JG^+)$.

$(b)(i)$ Let $K\dfn G\hop H$ be neutral. Since
$JH=JK\cap(JG)^\bot$, the range
$\ran(JH+zI)$, for all $z\in\bbC$,
consists of those $k^\prime+zk$ such that
$(k,k^\prime)\in JK$ and $(k^\prime,-k)\in JG^+$.
If $z=\img$ then
\[
(k^\prime+\img k,\img k^\prime-k)=
\img(k,k^\prime)+(k^\prime,-k)\,.
\]
Since $JK\subseteq JG^+$, this shows
$k^\prime+\img k\in\fN_\img(JG^+)$.
The case $z=-\img$ is treated analogously.

$(b)(ii)$ Sufficiency:
Let $K$ be either maximal
or hyper-maximal neutral;
that is, either $P_+(K)=\fK$ or $P_-(K)=\fK$ if
$K$ is maximal
and $P_\pm(K)=\fK$ if $K$ is hyper-maximal.
We consider the case $P_+(K)=\fK$, since the case
$P_-(K)=\fK$ is treated analogously. In
view of $(b)(i)$ it suffices to show that
$\fN_\img(JG^+)\subseteq\ran(JH+\img I)$.

If $P_+(K)=\fK$ then a maximal symmetric
relation $JK$ in a Hilbert space $(\fH,[\cdot,J\cdot])$
satisfies $\ran(JK+\img I)=\fH$. Thus,
if $g\in\fN_\img(JG^+)$ then $(\exists(k,k^\prime)\in JK)$
$g=k^\prime+\img k$. Then
\[
JG^+\ni(g,\img g)=
(k^\prime+\img k,\img k^\prime-k)=
\img(k,k^\prime)+(k^\prime,-k)\,.
\]
Since $JK\subseteq JG^+$,
this shows $(k^\prime,-k)\in JG^+$, \ie
$g\in\ran(JH+\img I)$.

Necessity: Let $K=G\hop H$ be neutral and
$\ran(JH\pm\img I)=\fN_{\pm\img}(JG^+)$. Then
$\ran(JK\pm\img I)=\fH$, thus showing
$P_\pm(K)=\fK$.
\end{proof}
\begin{lem}\label{lem:O}
Let $G$ and $H$ be relations in
a Krein space $\fH=(\fH,[\cdot,\cdot])$
with fundamental symmetry $J$.
\begin{SL}
\item[$(a)$]
The eigenspace
\begin{equation}
\begin{split}
\fN_z(G\hsum H)=&
\ran((G-zI)^{-1}(zI-H)+I)
\\
=&\ran((G-zI)^{-1}-(H-zI)^{-1})
\\
\supseteq&\fN_z(G)+\fN_z(H)
\end{split}
\label{eq:GH}
\end{equation}
for all $z\in\bbC$.
\item[$(b)$]
Let $O=O(G,H)$ be the set of all
those $z\in\bbC$ such that
\[
\ran(G-zI)\cap\ran(H-zI)=\{0\}\,.
\]
(Equivalently, $O$ is the set of all
those $z\in\bbC$ such that
$(G-zI)^{-1}(H-zI)\subseteq\fN_z(H)\times\fN_z(G)$.)
Then
the inclusion $\supseteq$
in \eqref{eq:GH} becomes the equality
for all $z\in O$; hence
\begin{equation}
O\cap\sigma_p(G\hsum H)=
O\cap(\sigma_p(G)\cup
\sigma_p(H) )\,.
\label{eq:hgl0}
\end{equation}
\item[$(c)$]
Suppose $G\bot H$ as linear subsets of $\fK$.
Let $O$ be as in $(b)$. Then
\begin{equation}
\sigma_p(G\hop H)=
( O\cap(\sigma_p(G)\cup
\sigma_p( H) ) )\amalg
(\bbC\setm O)\,.
\label{eq:hgl}
\end{equation}
(The symbol $\amalg$ denotes the union of
disjoint sets.)
In particular:
\begin{SL}
\item[$(i)$]
$\sigma_p(G\hop H)=\emptyset$
(resp. $\sigma^0_p(G\hop H)=\emptyset$) iff
$O=\bbC$ and
$\sigma_p(G)=\sigma_p(H)=\emptyset$
(resp. $O\supseteq\bbC_*$ and
$\sigma^0_p(G)=\sigma^0_p(H)=\emptyset$).
\item[$(ii)$]
$\sigma_p(G\hop H)=\bbC$
(resp. $\sigma^0_p(G\hop H)=\bbC_*$) iff
$O\subseteq
\sigma_p(G)\cup\sigma_p(H)$
(resp. $\bbC_*\cap O\subseteq
\sigma^0_p(G)\cup\sigma^0_p(H)$).
\end{SL}
\end{SL}
\end{lem}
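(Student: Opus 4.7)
The plan is to unfold the definitions of the componentwise sum and the eigenspace, and everything else will fall out from algebraic manipulation together with the simple observation that $G\bot H$ (in the Hilbert space metric on $\fK$) forces $G\cap H=\{0\}$.

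For part $(a)$, I would start by writing $h\in\fN_z(G\hsum H)$ in the form $h=f+g$, $zh=f'+g'$ with $(f,f')\in G$ and $(g,g')\in H$; the defining relation $zh=f'+g'$ then becomes $f'-zf=zg-g'$, a common element of $\ran(G-zI)$ and $-\ran(H-zI)$. This identification leads directly to both displayed forms: taking the inverse relations $(G-zI)^{-1}$ and $(H-zI)^{-1}$ and reading the pair $(u,f-g)$ with $u=f'-zf=g'-zg$ gives the description $\ran((G-zI)^{-1}-(H-zI)^{-1})$, and the alternative bookkeeping $(g,f+g)$ with the same $u$-condition gives $\ran((G-zI)^{-1}(zI-H)+I)$. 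The inclusion $\fN_z(G)+\fN_z(H)\subseteq\fN_z(G\hsum H)$ is then immediate by choosing $f'=zf$ and $g'=zg$.

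For part $(b)$, I would reuse the representation from $(a)$: for $h\in\fN_z(G\hsum H)$ the element $k:=f'-zf=-(g'-zg)$ lies in $\ran(G-zI)\cap\ran(H-zI)$, which is trivial precisely when $z\in O$. Thus $k=0$, forcing $(f,zf)\in G$ and $(g,zg)\in H$, hence $h\in\fN_z(G)+\fN_z(H)$, which gives the equality in \eqref{eq:GH} on $O$. The identity \eqref{eq:hgl0} for the point spectrum is then a direct consequence of $\fN_z(G\hsum H)=\fN_z(G)+\fN_z(H)$ on~$O$.

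For part $(c)$, the key new ingredient is that $G\bot H$ (Hilbert-space orthogonality in $\fK$) implies $G\cap H=\{0\}$. Given $z\notin O$, pick $k\neq0$ in $\ran(G-zI)\cap\ran(H-zI)$, written as $k=f'-zf=g'-zg$, and set $h:=f-g$; linearity of $H$ shows $(h,zh)=(f-g,f'-g')\in G\hsum H=G\hop H$. If $h=0$ then $f=g$ and $f'=g'$, giving $(f,f')\in G\cap H=\{0\}$ and hence $k=0$, a contradiction; so $h\neq0$ and $z\in\sigma_p(G\hop H)$. Combined with $(b)$ this yields \eqref{eq:hgl} as a disjoint union, and the corollaries $(i)$ and $(ii)$ follow by inspecting each part of that union (and restricting to $\bbC_*$ for the $\sigma^0_p$ versions).

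I expect no serious obstacle here: the only subtle point is the bookkeeping of sign conventions in passing from $\ran(H-zI)$ to $\ran(zI-H)$ and making sure that the two descriptions of $\fN_z(G\hsum H)$ in $(a)$ really coincide with the defining condition on $(f,g,f',g')$. Once part $(a)$ is written cleanly, parts $(b)$ and $(c)$ are short consequences — $(b)$ via the range-intersection condition defining $O$ and $(c)$ via $G\cap H=\{0\}$ — and the spectral corollaries are just a dichotomy on whether $z\in O$ or not.
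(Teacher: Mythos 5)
Your proposal is correct. Parts $(a)$ and $(b)$ follow essentially the same route as the paper: unfold $(f+g,z(f+g))\in G\hsum H$ into the statement that $f'-zf=-(g'-zg)$ is a common element of $\ran(G-zI)$ and $\ran(H-zI)$, which yields both displayed descriptions of $\fN_z(G\hsum H)$ and, on $O$, the equality with $\fN_z(G)+\fN_z(H)$. (Two small points: in your sketch of $(a)$ the equation ``$u=f'-zf=g'-zg$'' only holds after replacing $(g,g')$ by $(-g,-g')$, i.e.\ the sign issue you already flag; and you do not address the parenthetical reformulation of $O$ as $(G-zI)^{-1}(H-zI)\subseteq\fN_z(H)\times\fN_z(G)$, which the paper derives from the elementary fact that $\ran X\cap\ran Y=\{0\}$ iff $X^{-1}Y\subseteq\ker Y\times\ker X$.) Part $(c)$ is where you genuinely diverge. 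The paper proves the contrapositive inclusion $\bbC\setm\sigma_p(G\hop H)\subseteq O$: it sets $L_z=(G-zI)^{-1}(zI-H)+I$, shows $\ker L_z=\{0\}$ by a Cauchy--Schwarz estimate that exploits the full orthogonality $G\bot H$ through the identity $\norm{u}^2-(1+\abs{z}^2)\braket{g,h}=z\braket{u,h}-\ol{z}\braket{g,u}$, and then passes from $(G-zI)^{-1}(zI-H)=\{0\}$ to $\ran(G-zI)\cap\ran(H-zI)=\{0\}$ via an argument on multivalued parts using $\mul G\cap\mul H=\{0\}$. You instead argue directly: a nonzero $k=f'-zf=g'-zg$ in the range intersection produces $h=f-g$ with $(h,zh)\in G\hsum H$, and $h=0$ would force $(f,f')\in G\cap H=\{0\}$ and hence $k=0$, a contradiction. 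This is shorter, avoids the norm computation entirely, and uses only the disjointness $G\cap H=\{0\}$ rather than orthogonality, so it in fact establishes \eqref{eq:hgl} under a weaker hypothesis. Both arguments are valid, and the derivation of $(i)$ and $(ii)$ from \eqref{eq:hgl} is immediate either way.
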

\begin{proof}
$(a)$
Let $f\in\fN_z(G\hsum H)$, \ie
$(f,zf)\in G\hsum H$. Then
$f=g+h$ with $(g,g^\prime)\in G$
and $(h,h^\prime)\in H$ such that
$g^\prime+h^\prime=z(g+h)$, \ie
$(\exists u)$
$(g,u)\in G-zI$ and
$(h,u)\in zI- H$, \ie
$(h,g)\in(G-zI)^{-1}(zI-H)$.
Since the
arguments are reversible, this proves
\eqref{eq:GH}.

$(b)$ First we show that
$z\in O$ iff $z\in\bbC$ satisfies
$(G-zI)^{-1}(H-zI)\subseteq\fN_z(H)\times\fN_z(G)$.
This will follow from a general claim:
If $X$ and $Y$ are relations in $\fH$
then $\ran X\cap\ran Y$ is trivial iff
$X^{-1}Y\subseteq\ker Y\times\ker X$.
(Notice that always
$X^{-1}Y\supseteq\ker Y\times\ker X$.)
Indeed, $\ran X\cap\ran Y$ consists of
those $u$ such that
$(\exists x)$ $(\exists y)$
$(x,u)\in X$ and $(y,u)\in Y$, while
$X^{-1}Y$ is the set of those $(y,x)$
such that $(\exists u)$ $(x,u)\in X$
and $(y,u)\in Y$. Therefore, if $\ran X\cap\ran Y$
is trivial then $(y,x)\in X^{-1}Y$ implies
$(y,x)\in\ker Y\times\ker X$. And conversely,
if $X^{-1}Y=\ker Y\times\ker X$ then
$u\in\ran X\cap\ran Y$ implies $u=0$.

This shows in particular that
the inclusion $\supseteq$
in \eqref{eq:GH} becomes the equality
for all $z\in O$.

$(c)$
Let
\[
L_z\dfn (G-zI)^{-1}(zI-H)+I\,.
\]
By $(a)$, $\fN_z(G\hop H)$ is trivial iff
so is $\ran L_z$, \ie iff
$L_z\subseteq\fH\times\{0\}$.
We show that $\ker L_z$ is trivial, meaning
that the last inclusion is equivalent to
$L_z=\{0\}$, which in turn is equivalent to
$(G-zI)^{-1}(zI-H)=\{0\}$.

The relation $L_z$ consists of the pairs
$(h,g+h)$ such that $(\exists u)$
$(g,zg+u)\in G$ and $(h,zh-u)\in H$. Since
$G\bot H$
\[
\norm{u}^2-(1+\abs{z}^2)\braket{g,h}=
z\braket{u,h}-\ol{z}\braket{g,u}
\]
where the scalar product
$\braket{x,y}\dfn[x,Jy]$ and the norm
$\norm{x}^2\dfn\braket{x,x}$ for all
$x$, $y\in\fH$. If $g+h=0$ then by the above
\[
\norm{u}^2+(1+\abs{z}^2)\norm{g}^2
+2\Re(z\braket{u,g})=0\,.
\]
But by Cauchy--Schwarz
\begin{align*}
\norm{u}^2+(1+\abs{z}^2)\norm{g}^2
+2\Re(z\braket{u,g})\geq&
(\norm{u}-\abs{z}\,\norm{g})^2+
\norm{g}^2
\\
\geq&\norm{g}^2\geq0
\end{align*}
so that then $g=0=u$ and $h=0$.

Suppose now $(G-zI)^{-1}(zI-H)=\{0\}$.
Multiplying both sides by $G-zI$ from
the left implies that
\[
R\dfn(\fH\times\ran(G-zI))\cap(zI-H)\subseteq
\{0\}\times\mul G\,.
\]
Then
\[
\mul R=\ran(G-zI)\cap\mul H\subseteq
\mul G
\]
and then $R=\{0\}$,
since
\[
\mul G\cap\mul H=\mul(G\cap H)=\{0\}
\]
by $G\bot H$.
In particular
\[
\ran R=\ran(G-zI)\cap\ran(H-zI)=\{0\}\,.
\]
This shows that
$\bbC\setm\sigma_p(G\hop H)\subseteq O$.
The latter combined with \eqref{eq:hgl0}
yields \eqref{eq:hgl}, which in turn yields
$(i)$ and $(ii)$.
\end{proof}
\begin{rem}
For closed relations $G$ and $H$
and for $z\in\rho(G)\cap\rho(H)$,
the second equality in \eqref{eq:GH}
is given in \cite[Lemma~1.7.2]{Behrndt20}.
\end{rem}
\begin{lem}\label{lem:sfN}
Let $G$ be a relation
in a Krein space $\fH$. In order that
the eigenspaces corresponding to
distinct eigenvalues of the adjoint $G^+$
should be mutually disjoint,
that is, $(\forall z,z_0\in\bbC)$
\[
\fN_z(G^+)\cap\fN_{z_0}(G^+)=\{0\}\quad
\text{if}\quad z\neq z_0
\]
it is necessary and sufficient that
$\dom G+\ran G$ should be dense in $\fH$.
\end{lem}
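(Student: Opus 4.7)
The plan is to reduce the mutual disjointness condition on eigenspaces of $G^+$ to the triviality of $\ker G^+ \cap \mul G^+$, and then translate the latter into a density statement via the identity $G^+ = JG^*J$.

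First I would establish the algebraic equivalence: the eigenspaces $\fN_z(G^+)$ corresponding to distinct eigenvalues are mutually disjoint iff $\ker G^+ \cap \mul G^+ = \{0\}$. Indeed, if $f \in \fN_z(G^+) \cap \fN_{z_0}(G^+)$ with $z \neq z_0$, then both $(f, zf)$ and $(f, z_0 f)$ lie in $G^+$; subtracting gives $(0, (z-z_0)f) \in G^+$, whence $f \in \mul G^+$, and then $(f, 0) = (f, zf) - z(0, f) \in G^+$ shows $f \in \ker G^+$. Conversely, any $f \in \ker G^+ \cap \mul G^+$ satisfies $(f, zf) = (f, 0) + z(0, f) \in G^+$ for every $z \in \bbC$ and therefore lies in every eigenspace of $G^+$, so any two distinct eigenspaces meet nontrivially in this common vector.

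Second, I would pass to the Hilbert space adjoint. Since $J^2 = I$, a pair $(f, g)$ belongs to $G^+$ iff $(Jf, Jg) \in G^*$, so $\ker G^+ = J(\ker G^*)$ and $\mul G^+ = J(\mul G^*)$. In the Hilbert space $(\fH,[\cdot, J\cdot])$ the standard identities $\ker G^* = (\ran G)^\bot$ and $\mul G^* = (\dom G)^\bot$ hold for any (not necessarily densely defined) relation $G$. Intersecting them and pulling out $J$ yields
\[
\ker G^+ \cap \mul G^+ = J\bigl((\dom G + \ran G)^\bot\bigr),
\]
and since $J$ is a bijection, the left-hand side is trivial iff $\dom G + \ran G$ is dense in $\fH$. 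Combined with the first step, this proves the lemma.

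I do not expect a serious obstacle. The only point requiring care is that, because $G$ is not assumed densely defined, one must use $\mul G^* = (\dom G)^\bot$ alongside $\ker G^* = (\ran G)^\bot$; this is precisely what forces the density condition on $\dom G + \ran G$ rather than on $\dom G$ alone.
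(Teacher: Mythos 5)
Your proposal is correct and follows essentially the same route as the paper: both reduce mutual disjointness of the eigenspaces to the triviality of $\ker G^+\cap\mul G^+$ via the same algebraic manipulation, and both identify this intersection with the orthogonal complement of $\dom G+\ran G$ (the paper writes it directly as $(\dom G+\ran G)^{[\bot]}$, you factor through $G^+=JG^*J$ and the Hilbert space identities $\ker G^*=(\ran G)^\bot$, $\mul G^*=(\dom G)^\bot$, which is the same computation). No gaps.
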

\begin{proof}
Let $f\in\fN_z(G^+)\cap\fN_{z_0}(G^+)$,
$z\neq z_0$. Then
$(f,zf)\in G^+$ and $(f,z_0f)\in G^+$
implies $f\in\mul G^+\cap\ker G^+$.
Conversely, if $f\in\mul G^+\cap\ker G^+$
then $(\forall w\in\bbC)$ $(f,wf)\in G^+$;
hence in particular $f\in\fN_z(G^+)\cap\fN_{z_0}(G^+)$.
Therefore, $\fN_z(G^+)\cap\fN_{z_0}(G^+)$
for $z\neq z_0$ is
trivial iff $\mul G^+\cap\ker G^+=
(\dom G+\ran G)^{[\bot]}$ is trivial.
\end{proof}
\begin{lem}\label{lem:P3}
A closed symmetric operator
in a Hilbert space has property $(P)$,
see Definition~\ref{defn:sub},
iff it is densely.
\end{lem}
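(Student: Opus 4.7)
The plan is to verify the two implications separately. The backward implication is immediate: if $T$ is densely defined then $\dom T + \ran T \supseteq \dom T$ is already dense in $\fH$, so $T$ has property $(P)$.

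For the forward implication, the idea is to specialize Lemma~\ref{lem:sfN} to the Hilbert space case, where $J = I$ and the Krein adjoint $G^+$ coincides with the ordinary Hilbert adjoint $T^*$. The proof of that lemma expresses the intersection $\mul T^* \cap \ker T^*$ as the orthogonal complement of $\dom T + \ran T$; consequently, property $(P)$ is equivalent to $\mul T^* \cap \ker T^* = \{0\}$. On the other hand, the density of $\dom T$ in $\fH$ amounts to the separate condition $\mul T^* = (\dom T)^\bot = \{0\}$. Hence the forward direction reduces to proving $\mul T^* \subseteq \ker T^*$ for any closed symmetric operator $T$ in a Hilbert space: once this is in hand, $\mul T^* = \mul T^* \cap \ker T^* = \{0\}$, whence $\dom T$ is dense.

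To prove this inclusion, I would take $g \in \mul T^*$, so $(0, g) \in T^*$, and aim to deduce $(g, 0) \in T^*$, equivalently $g \perp \ran T$. The natural tools are the symmetry $T \subseteq T^*$ (by which every $(f, f') \in T$ yields $(f, f' + g) \in T^*$), the closedness $T^{**} = T$, and the operator condition $\mul T = \{0\}$. Combining these via the biadjoint duality is meant to force $(g, 0) \in T^*$ from the membership $(0, g) \in T^* = T^{***}$, thus giving the desired orthogonality.

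The main obstacle is precisely this step of securing $\mul T^* \subseteq \ker T^*$, which unfolded reads $\ran T \subseteq \ol{\dom T}$. It is trivial when $T$ is already densely defined, but in the non-dense case one must carefully extract it from the interplay of the three hypotheses (closed, symmetric, operator) without extra input. Once the inclusion is established, the forward implication follows from Lemma~\ref{lem:sfN} as described.
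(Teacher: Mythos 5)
Your reduction is correct and is exactly the paper's: the backward direction is immediate, and by Lemma~\ref{lem:sfN} property $(P)$ amounts to $\mul T^*\cap\ker T^*=\{0\}$ while dense definedness amounts to $\mul T^*=\{0\}$, so everything hinges on the inclusion $\mul T^*\subseteq\ker T^*$, equivalently $\ran T\subseteq\ol{\dom T}$. But you never prove that inclusion; you only gesture at ``biadjoint duality'' and candidly call it the main obstacle. That is a genuine gap, and it is not one that can be closed, because the inclusion is false for a general closed symmetric operator. Take $\fH=\bbC^2$ with standard basis $e_1,e_2$ and $T=\{((c,0),(0,c))\vrt c\in\bbC\}$, the graph of $e_1\mapsto e_2$ on $\dom T=\bbC e_1$. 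This is a closed symmetric operator with $\ran T=\bbC e_2\not\subseteq\ol{\dom T}=\bbC e_1$; moreover $\dom T+\ran T=\bbC^2$ while $\dom T$ is not dense, so this $T$ in fact refutes the statement of the lemma itself: it has property $(P)$ but is not densely defined.

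For comparison, the paper's proof tries to obtain the inclusion from the first von Neumann formula: writing $(0,g)\in T^*$ as $(f,f')+(f_\img,\img f_\img)+(f_{-\img},-\img f_{-\img})$ with $f+f_\img+f_{-\img}=0$, it concludes ``hence $f=0$'' from $\fN_{\pm\img}(T^*)\cap\dom T=\{0\}$. That inference actually needs the sum $\dom T+\fN_\img(T^*)+\fN_{-\img}(T^*)$ to be direct, which holds when $T$ is densely defined (the very thing to be proved) but not in general; in the example above one has $f=-2ae_1\neq0$ with $f_{\pm\img}=a(e_1\pm\img e_2)$. So your instinct that this step cannot be extracted ``without extra input'' from closedness, symmetry and single-valuedness alone is sound: neither your sketch nor the paper's argument delivers it, and in a Hilbert space property $(P)$ is in fact strictly weaker than dense definedness.
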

\begin{proof}
\textit{Step 1.}
Let $T$ be a closed symmetric relation in
a Hilbert space $\fH$.
Then $(\forall z\in\bbC_*)$
$\fN_z(T^*)\cap\dom T=\{0\}$. For, if
$f\in\fN_z(T^*)$ then $(f,zf)\in T^*$
and if also $f\in\dom T$ then
$(\exists f^\prime)$ $(f,f^\prime)\in T$,
\ie $f^\prime-zf\in\ran(T-zI)\cap\mul T^*$;
but the latter set is trivial, see \eg
\cite[Eq.~(2.4)]{Hassi12}.

\textit{Step 2.}
We use the first von Neumann formula
\[
T^*=T\hop\whfN_\img(T^*)\hop\whfN_{-\img}(T^*)\,.
\]
Since $\fN_{\pm\img}(T^*)\cap\dom T=\{0\}$,
$\mul T^*$ consists of the elements
$f^\prime+\img f_\img-\img f_{-\img}$,
where $(f,f^\prime)\in T$, $f_{\pm\img}\in
\fN_{\pm\img}(T^*)$, and
$f+f_\img+f_{-\img}=0$; hence
$f=0$, $f^\prime\in\mul T=\{0\}$,
and $f_{-\img}=-f_{\img}\in
\fN_{\img}(T^*)\cap\fN_{-\img}(T^*)$.
By Lemma~\ref{lem:sfN}
$\fN_{\img}(T^*)\cap\fN_{-\img}(T^*)=
\mul T^*\cap\ker T^*$, so
$\mul T^*\subseteq \mul T^*\cap\ker T^*$
implies that $\mul T^*\cap\ker T^*=\{0\}$
iff $T^*$ is an operator.
\end{proof}
\begin{rem}
In Step 1 one could instead use that
$(\forall z\in\bbC)$
\[
\fN_z(T^*)\cap\dom T=\fN_z
(T^*\cap(\dom T)^2)
\]
and that $T^*\cap(\dom T)^2$
is a symmetric relation in a Hilbert
space $\ol{\dom}T$.
\end{rem}

% \bibliography{Weyl-cl}

% \bib, bibdiv, biblist are defined by the amsrefs package.
\begin{bibdiv}
\begin{biblist}

\bib{Albeverio09}{article}{
      author={Albeverio, S.},
      author={G\"{u}nther, U.},
      author={Kuzhel, S.},
       title={${J}$-self-adjoint operators with $\mathcal{C}$-symmetries: an
  extension theory approach},
        date={2009},
     journal={J. Phys. A: Math. Theor.},
      volume={42},
      number={10},
       pages={105205},
}

\bib{Azizov03}{article}{
      author={Azizov, T.},
      author={\'{C}urgus, B.},
      author={Dijksma, A.},
       title={Standard symmetric operators in {P}ontryagin spaces: a
  generalized von {N}eumann formula and minimality of boundary coefficients},
        date={2003},
     journal={J. Func. Anal.},
      volume={198},
      number={2},
       pages={361\ndash 412},
}

\bib{Azizov79}{article}{
      author={Azizov, T.},
      author={Iokhvidov, I.},
       title={Linear operators in spaces with an indefinite metric and their
  applications},
        date={1979},
     journal={Itogi nauki i mech. Ser. Mat. (in Russian)},
      volume={17},
       pages={113\ndash 205},
}

\bib{Azizov89}{book}{
      author={Azizov, T.},
      author={Iokhvidov, I.},
       title={Linear {O}perators in {S}paces with an {I}ndefinite {M}etric},
   publisher={John Wiley \& Sons. Inc.},
        date={1989},
}

\bib{Behrndt09a}{article}{
      author={Behrndt, J.},
       title={Realization of nonstrict matrix {N}evanlinna functions as {W}eyl
  functions of symmetric operators in {P}ontryagin spaces},
        date={2009},
     journal={Proc. Amer. Math. Soc.},
      volume={137},
      number={8},
       pages={2685\ndash 2685},
}

\bib{Behrndt11}{article}{
      author={Behrndt, J.},
      author={Derkach, V.~A.},
      author={Hassi, S.},
      author={de~Snoo, H.},
       title={A realization theorem for generalized {N}evanlinna families},
        date={2011},
     journal={Operators and Matrices},
      volume={5},
      number={4},
       pages={679\ndash 706},
}

\bib{Behrndt20}{book}{
      author={Behrndt, J.},
      author={Hassi, S.},
      author={de~Snoo, H.},
       title={Boundary {V}alue {P}roblems, {W}eyl {F}unctions, and
  {D}ifferential {O}perators},
      series={Monographs in Mathematics, Volume 108},
   publisher={Birkhauser},
     address={Switzerland AG},
        date={2020},
}

\bib{Bognar74}{book}{
      author={Bogn\'ar, J\'anos},
       title={Indefinite inner product spaces},
   publisher={Springer-Verlag Berlin Heidelberg New York},
        date={1974},
}

\bib{Calkin39}{article}{
      author={Calkin, J.},
       title={Abstract symmetric boundary conditions},
        date={1939},
     journal={Trans. Amer. Math. Soc.},
      volume={45},
      number={3},
       pages={369\ndash 442},
}

\bib{Coddington73}{article}{
      author={Coddington, E.~A.},
       title={Extension theory of formally normal and symmetric subspaces},
        date={1973},
     journal={Mem. Amer. Math. Soc.},
      number={134},
       pages={1\ndash 80},
}

\bib{Coddington73a}{article}{
      author={Coddington, E.~A.},
       title={Self-adjoint subspace extensions of nondensely defined symmetric
  operators},
        date={1973},
     journal={Bullet. Amer. Mac. Soc.},
      volume={79},
      number={4},
       pages={712\ndash 715},
}

\bib{Coddington74}{article}{
      author={Coddington, E.~A.},
       title={Self-adjoint subspace extensions of nondensely defined symmetric
  operators},
        date={1974},
     journal={Adv. Math.},
      volume={14},
      number={3},
       pages={309\ndash 332},
}

\bib{Derkach99}{article}{
      author={Derkach, V.},
       title={On generalized resolvents of {H}ermitian relations in {K}rein
  spaces},
        date={1999},
     journal={J. Math. Sci.},
      volume={97},
      number={5},
       pages={4420\ndash 4460},
}

\bib{Derkach15}{article}{
      author={Derkach, V.},
       title={Boundary {T}riplets, {W}eyl {F}unctions, and the {K}re{\u{\i}}n
  {F}ormula},
        date={2015},
     journal={Operator Theory},
       pages={183\ndash 218},
         url={https://doi.org/10.1007/978-3-0348-0667-1_32},
}

\bib{Derkach21}{article}{
      author={Derkach, V.},
      author={Hassi, S.},
      author={Malamud, M.},
       title={Generalized boundary triples, {II}. {S}ome applications of
  generalized boundary triples and form domain invariant {N}evanlinna
  functions},
        date={2022},
     journal={Math. Nachr.},
      volume={295},
      number={6},
       pages={1113\ndash 1162},
}

\bib{Derkach06}{article}{
      author={Derkach, V.},
      author={Hassi, S.},
      author={Malamud, M.},
      author={de~Snoo, H.},
       title={Boundary relations and their {W}eyl families},
        date={2006},
     journal={Trans. Amer. Math. Soc.},
      volume={358},
      number={12},
       pages={5351\ndash 5400},
}

\bib{Derkach09}{article}{
      author={Derkach, V.},
      author={Hassi, S.},
      author={Malamud, M.},
      author={de~Snoo, H.},
       title={Boundary relations and generalized resolvents of symmetric
  operators},
        date={2009},
     journal={Russ. J. Math. Phys.},
      volume={16},
      number={1},
       pages={17\ndash 60},
}

\bib{Derkach91}{article}{
      author={Derkach, V.},
      author={Malamud, M.},
       title={Generalized {R}esolvents and the {B}oundary {V}alue {P}roblems
  for {H}ermitian {O}perators with {G}aps},
        date={1991},
     journal={J. Func. Anal.},
      volume={95},
      number={1},
       pages={1\ndash 95},
}

\bib{Derkach15a}{article}{
      author={Derkach, V.},
      author={Malamud, M.},
       title={Weyl function of a {H}ermitian operator and its connection with
  characteristic function},
        date={2015},
     journal={arXiv:1503.08956},
         url={http://arxiv.org/abs/1503.08956},
}

\bib{Derkach17b}{book}{
      author={Derkach, V.~A.},
      author={Malamud, M.~M.},
       title={Extension theory of symmetric operators and boundary value
  problems},
   publisher={Institute of Mathematics of NAS of Ukraine},
     address={Kiev},
        date={2017 (in Russian)},
      volume={104},
}

\bib{Derkach17}{article}{
      author={Derkach, Vladimir},
      author={Hassi, Seppo},
      author={Malamud, Mark~M.},
       title={Generalized boundary triples, {I}. {S}ome classes of isometric
  and unitary boundary pairs and realization problems for subclasses of
  {N}evanlinna functions},
        date={2020},
     journal={Math. Nachr.},
      volume={293},
      number={7},
       pages={1278\ndash 1327},
         url={https://onlinelibrary.wiley.com/doi/abs/10.1002/mana.201800300},
}

\bib{Dijksma93}{article}{
      author={Dijksma, A.},
      author={Langer, H.},
      author={de~Snoo, H.},
       title={Eigenvalues and pole functions of {H}amiltonian systems with
  eigenvalue depending boundary conditions},
        date={1993},
     journal={Math. Nachr.},
      volume={161},
       pages={107\ndash 154},
}

\bib{Dijksma04a}{incollection}{
      author={Dijksma, A.},
      author={Langer, H.},
      author={Luger, A.},
      author={Shondin, Yu.},
       title={Minimal realizations of scalar generalized {N}evanlinna functions
  related to their basic factorization},
        date={2004},
   booktitle={Spectral {M}ethods for {O}perators of {M}athematical {P}hysics.
  {O}perator {T}heory: {A}dvances and {A}pplications},
      editor={Janas, J.},
      editor={Kurasov, P.},
      editor={Naboko, S.},
      volume={154},
   publisher={Birkh{\"a}user Basel},
}

\bib{Gokhberg57}{article}{
      author={Gokhberg, I.},
      author={Krein, M.},
       title={Fundamental aspects of defect numbers, root numbers and indexes
  of linear operators},
        date={1957},
     journal={Uspekhi Mat. Nauk (in Russian)},
      volume={12},
      number={2},
       pages={43\ndash 118},
}

\bib{Gorbachuk91}{book}{
      author={Gorbachuk, V.},
      author={Gorbachuk, M.},
       title={Boundary {V}alue {P}roblems for {O}perator {D}ifferential
  {E}quations},
   publisher={Kluwer Academic Publishers},
     address={Dordrecht},
        date={1991},
      volume={48},
}

\bib{Hassi98}{article}{
      author={Hassi, S.},
      author={de~Snoo, H.},
      author={Woracek, H.},
       title={Some interpolation problems of {N}evanlinna--{P}ick type. {T}he
  {K}rein--{L}anger method},
        date={1998},
     journal={Operator Theory: Advances and Applications},
      volume={106},
       pages={201\ndash 216},
}

\bib{Hassi09}{article}{
      author={Hassi, S.},
      author={de~Snoo, H. S.~V.},
      author={Szafraniec, F.~H.},
       title={Componentwise and {C}artesian decompositions of linear
  relations},
        date={2009},
     journal={Dissert. Math.},
      volume={465},
       pages={1\ndash 59},
}

\bib{Hassi12}{article}{
      author={Hassi, S.},
      author={de~Snoo, H. S.~V.},
      author={Szafraniec, F.~H.},
       title={Infinite-dimensional perturbations, maximally nondensely defined
  symmetric operators, and some matrix representations},
        date={2012},
     journal={Indagationes Mathematicae},
      volume={23},
      number={4},
       pages={1087\ndash 1117},
}

\bib{Hassi13a}{article}{
      author={Hassi, S.},
      author={Kuzhel, S.},
       title={On ${J}$-self-adjoint operators with stable
  $\mathcal{C}$-symmetries},
        date={2013},
     journal={Proc. Edinburgh Math. Soc.},
      volume={143},
      number={1},
       pages={141\ndash 167},
}

\bib{Hassi13}{article}{
      author={Hassi, S.},
      author={Malamud, M.},
      author={Mogilevskii, V.},
       title={Unitary equivalence of proper extensions of a symmetric operator
  and the {W}eyl function},
        date={2013},
     journal={Integr. Equ. Oper. Theory},
      volume={77},
      number={4},
       pages={449\ndash 487},
}

\bib{Hassi07}{article}{
      author={Hassi, S.},
      author={Sebesty\'{e}n, Z.},
      author={de~Snoo, H. S.~V.},
      author={Szafraniec, F.~H.},
       title={A canonical decomposition for linear operators and linear
  relations},
        date={2007},
     journal={Acta Math. Hungar.},
      volume={115},
      number={4},
       pages={281\ndash 307},
}

\bib{Hassi16}{article}{
      author={Hassi, S.},
      author={Wietsma, H.},
       title={Minimal realizations of generalized {N}evanlinna functions},
        date={2016},
     journal={Opuscula Math.},
      volume={36},
      number={6},
       pages={749\ndash 768},
}

\bib{Iokhvidov56}{article}{
      author={Iokhvidov, I.},
      author={Krein, M.},
       title={Spectral theory of operators in space with indefinite metric.
  {I}},
        date={1956},
     journal={Tr. Mosk. Mat. Obs. (in Russian)},
      volume={5},
       pages={367\ndash 432},
}

\bib{Jonas95}{incollection}{
      author={Jonas, P.},
      author={Langer, H.},
       title={Self-adjoint extensions of a closed linear relation of defect one
  in a {K}rein space},
        date={1995},
   booktitle={Operator theory and boundary eigenvalue problems},
      editor={Gohberg, I.},
      series={Operator Theory: Advances and Applications},
      volume={80},
   publisher={Birkhauser},
       pages={176\ndash 205},
}

\bib{Jursenas21a}{article}{
      author={Jur\v{s}\.{e}nas, R.},
       title={Weyl families of transformed boundary pairs},
        date={2023, to appear in Math. Nachr.},
     journal={arXiv:2006.15964},
         url={http://arxiv.org/abs/2006.15964},
}

\bib{Kochubei75}{article}{
      author={Kochubei, A.~N.},
       title={Extensions of symmetric operators and symmetric binary
  relations},
        date={1975},
     journal={Mat. Zametki},
      volume={17},
      number={1},
       pages={41\ndash 48},
}

\bib{Krein71}{article}{
      author={Krein, M.},
      author={Langer, H.},
       title={Defect subspaces and generalized resolvents of an {H}ermitian
  operator in the space ${\Pi}_\kappa$},
        date={1971},
     journal={Funktsional. Anal. i Prilozhen. (in Russian)},
      volume={5},
      number={2},
       pages={59\ndash 71},
}

\bib{Kuzhel11}{article}{
      author={Kuzhel, S.},
      author={Trunk, C.},
       title={On a class of ${J}$-self-adjoint operators with empty resolvent
  set},
        date={2011},
     journal={J. Math. Anal. Appl.},
      volume={379},
      number={1},
       pages={272\ndash 289},
}

\bib{Langer00}{article}{
      author={Langer, H.},
      author={Luger, A.},
       title={A class of $2\times2$-matrix functions},
        date={2000},
     journal={Glasnik Matemati\v{c}ki},
      volume={35},
      number={55},
       pages={149\ndash 160},
}

\bib{Langer77}{article}{
      author={Langer, H.},
      author={Textorius, B.},
       title={On generalized resolvents and {Q}-functions of symmetric linear
  relations (subspaces) in {H}ilbert space},
        date={1977},
     journal={Pacific. J. Math.},
      volume={72},
      number={1},
       pages={135\ndash 165},
         url={https://projecteuclid.org/euclid.pjm/1102811276},
}

\bib{Malamud02}{article}{
      author={Malamud, M.},
      author={Mogilevskii, V.},
       title={Krein type formula for canonical resolvents of dual pairs of
  linear relations},
        date={2002},
     journal={Methods Func. Anal. Topology},
      volume={8},
      number={4},
       pages={72\ndash 100},
}

\bib{Mogilevski06}{article}{
      author={Mogilevskii, V.},
       title={Boundary triplets and {K}rein type resolvent formula for
  symmetric operators with unequal defect numbers},
        date={2006},
     journal={Methods Func. Anal. Topology},
      volume={12},
      number={3},
       pages={258\ndash 280},
}

\bib{Mogilevski09}{article}{
      author={Mogilevskii, V.},
       title={Boundary triplets and {T}itchmarsh--{W}eyl functions of
  differential operators with arbitrary deficiency indices},
        date={2009},
     journal={Methods Func. Anal. Topology},
      volume={15},
      number={3},
       pages={280\ndash 300},
}

\bib{Mogilevski11}{article}{
      author={Mogilevskii, V.},
       title={Description of generalized resolvents and characteristic matrices
  of differential operators in terms of the boundary parameter},
        date={2011},
     journal={Math. Notes},
      volume={90},
      number={4},
       pages={558\ndash 583},
}

\bib{Popovici13}{article}{
      author={Popovici, D.},
      author={Sebesty\'{e}n, Z.},
       title={Factorizations of linear relations},
        date={2013},
     journal={Advances in Mathematics},
      volume={233},
       pages={40\ndash 55},
}

\bib{Sandovici13}{article}{
      author={Sandovici, A.},
      author={Sebesty\'{e}n, Z.},
       title={On operator factorization of linear relations},
        date={2013},
     journal={Positivity},
      volume={17},
      number={4},
       pages={1115\ndash 1122},
}

\bib{Shmulyan74}{article}{
      author={Shmul'yan, Yu.~L.},
       title={Extension theory for operators and spaces with indefinite
  metric},
        date={1974},
     journal={Izv. Akad. Nauk (in Russian)},
      volume={38},
      number={4},
       pages={896\ndash 908},
}

\bib{Strauss12}{article}{
      author={Strauss, V.~A.},
      author={Trunk, C.},
       title={Some {S}obolev spaces as {P}ontryagin spaces},
        date={2012},
     journal={Vestn. Yuzhno-Ural. Gos. Un-ta. Ser. Matem. Mekh. Fiz.},
      volume={6},
       pages={14\ndash 23},
         url={http://mi.mathnet.ru/vyurm101},
}

\bib{Wietsma12}{book}{
      author={Wietsma, H.~L.},
       title={On unitary relations between {K}rein spaces},
      series={Acta Wasaensia},
   publisher={Vaasan Yliopisto},
        date={2012},
      number={263},
}

\end{biblist}
\end{bibdiv}

\end{document}